\newtheorem{theorem}{Theorem}[section]
\newtheorem{acknowledgement}[theorem]{Acknowledgement}
\newtheorem{axiom}[theorem]{Axiom}
\newtheorem{conjecture}[theorem]{Conjecture}
\newtheorem{corollary}[theorem]{Corollary}
\newtheorem{definition}[theorem]{Definition}
\newtheorem{exercise}[theorem]{Exercise}
\newtheorem{lemma}[theorem]{Lemma}
\newtheorem{proposition}[theorem]{Proposition}
\newtheorem{remark}[theorem]{Remark}
\newtheorem{example}[theorem]{Example}
\numberwithin{equation}{section}
\def\bOm{\overline{\Omega}}
\chardef\@x10\chardef\@xv60
\def\tcitime{
\def\@time{%
  \@minute\time\@hour\@minute\divide\@hour\@xv
  \ifnum\@hour<\@x 0\fi\the\@hour:%
  \multiply\@hour\@xv\advance\@minute-\@hour
  \ifnum\@minute<\@x 0\fi\the\@minute
  }}%
\def\QCTOpt[#1]#2{%
  \def\QCTOptB{#1}
  \def\QCTOptA{#2}
}
\def\QCTNOpt#1{%
  \def\QCTOptA{#1}
  \let\QCTOptB\empty
}
\def\Qct{%
  \@ifnextchar[{%
    \QCTOpt}{\QCTNOpt}
}
\def\QCBOpt[#1]#2{%
  \def\QCBOptB{#1}
  \def\QCBOptA{#2}
}
\def\QCBNOpt#1{%
  \def\QCBOptA{#1}
  \let\QCBOptB\empty
}
\def\Qcb{%
  \@ifnextchar[{%
    \QCBOpt}{\QCBNOpt}
}
\def\PrepCapArgs{%
  \ifx\QCBOptA\empty
    \ifx\QCTOptA\empty
      {}%
    \else
      \ifx\QCTOptB\empty
        {\QCTOptA}%
      \else
        [\QCTOptB]{\QCTOptA}%
      \fi
    \fi
  \else
    \ifx\QCBOptA\empty
      {}%
    \else
      \ifx\QCBOptB\empty
        {\QCBOptA}%
      \else
        [\QCBOptB]{\QCBOptA}%
      \fi
    \fi
  \fi
}
\def\GRAPHICSPS#1{%
 \ifcase\GRAPHICSTYPE
   \special{ps: #1}%
 \or
   \special{language "PS", include "#1"}%
 \fi
}%
\def\graffile#1#2#3#4{%
    \leavevmode
    \raise -#4 \BOXTHEFRAME{%
        \hbox to #2{\raise #3\hbox to #2{\null #1\hfil}}}%
}%
\def\draftbox#1#2#3#4{%
 \leavevmode\raise -#4 \hbox{%
  \frame{\rlap{\protect\tiny #1}\hbox to #2%
   {\vrule height#3 width\z@ depth\z@\hfil}%
  }%
 }%
}%
\newif\ifwasdraft
\def\GRAPHIC#1#2#3#4#5{%
 \ifnum\draft=\@ne\draftbox{#2}{#3}{#4}{#5}%
  \else\graffile{#1}{#3}{#4}{#5}%
  \fi
 }%
\def\addtoLaTeXparams#1{%
    \edef\LaTeXparams{\LaTeXparams #1}}%
\newif\ifBoxFrame \BoxFramefalse
\newif\ifOverFrame \OverFramefalse
\newif\ifUnderFrame \UnderFramefalse
\def\BOXTHEFRAME#1{%
   \hbox{%
      \ifBoxFrame
         \frame{#1}%
      \else
         {#1}%
      \fi
   }%
}
\def\doFRAMEparams#1{\BoxFramefalse\OverFramefalse\UnderFramefalse\readFRAMEparams#1\end}%
\def\readFRAMEparams#1{%
 \ifx#1\end%
  \let\next=\relax
  \else
  \ifx#1i\dispkind=\z@\fi
  \ifx#1d\dispkind=\@ne\fi
  \ifx#1f\dispkind=\tw@\fi
  \ifx#1t\addtoLaTeXparams{t}\fi
  \ifx#1b\addtoLaTeXparams{b}\fi
  \ifx#1p\addtoLaTeXparams{p}\fi
  \ifx#1h\addtoLaTeXparams{h}\fi
  \ifx#1X\BoxFrametrue\fi
  \ifx#1O\OverFrametrue\fi
  \ifx#1U\UnderFrametrue\fi
  \ifx#1w
    \ifnum\draft=1\wasdrafttrue\else\wasdraftfalse\fi
    \draft=\@ne
  \fi
  \let\next=\readFRAMEparams
  \fi
 \next
 }%
\def\IFRAME#1#2#3#4#5#6{%
      \bgroup
      \let\QCTOptA\empty
      \let\QCTOptB\empty
      \let\QCBOptA\empty
      \let\QCBOptB\empty
      #6%
      \parindent=0pt%
      \leftskip=0pt
      \rightskip=0pt
      \setbox0 = \hbox{\QCBOptA}%
      \@tempdima = #1\relax
      \ifOverFrame
          \typeout{This is not implemented yet}%
          \show\HELP
      \else
         \ifdim\wd0>\@tempdima
            \advance\@tempdima by \@tempdima
            \ifdim\wd0 >\@tempdima
               \textwidth=\@tempdima
               \setbox1 =\vbox{%
                  \noindent\hbox to \@tempdima{\hfill\GRAPHIC{#5}{#4}{#1}{#2}{#3}\hfill}\\%
                  \noindent\hbox to \@tempdima{\parbox[b]{\@tempdima}{\QCBOptA}}%
               }%
               \wd1=\@tempdima
            \else
               \textwidth=\wd0
               \setbox1 =\vbox{%
                 \noindent\hbox to \wd0{\hfill\GRAPHIC{#5}{#4}{#1}{#2}{#3}\hfill}\\%
                 \noindent\hbox{\QCBOptA}%
               }%
               \wd1=\wd0
            \fi
         \else
            \ifdim\wd0>0pt
              \hsize=\@tempdima
              \setbox1 =\vbox{%
                \unskip\GRAPHIC{#5}{#4}{#1}{#2}{0pt}%
                \break
                \unskip\hbox to \@tempdima{\hfill \QCBOptA\hfill}%
              }%
              \wd1=\@tempdima
           \else
              \hsize=\@tempdima
              \setbox1 =\vbox{%
                \unskip\GRAPHIC{#5}{#4}{#1}{#2}{0pt}%
              }%
              \wd1=\@tempdima
           \fi
         \fi
         \@tempdimb=\ht1
         \advance\@tempdimb by \dp1
         \advance\@tempdimb by -#2%
         \advance\@tempdimb by #3%
         \leavevmode
         \raise -\@tempdimb \hbox{\box1}%
      \fi
      \egroup%
}%
\def\DFRAME#1#2#3#4#5{%
 \begin{center}
     \let\QCTOptA\empty
     \let\QCTOptB\empty
     \let\QCBOptA\empty
     \let\QCBOptB\empty
     \ifOverFrame 
        #5\QCTOptA\par
     \fi
     \GRAPHIC{#4}{#3}{#1}{#2}{\z@}
     \ifUnderFrame 
        \nobreak\par #5\QCBOptA
     \fi
 \end{center}%
 }%
\def\FFRAME#1#2#3#4#5#6#7{%
 \begin{figure}[#1]%
  \let\QCTOptA\empty
  \let\QCTOptB\empty
  \let\QCBOptA\empty
  \let\QCBOptB\empty
  \ifOverFrame
    #4
    \ifx\QCTOptA\empty
    \else
      \ifx\QCTOptB\empty
        \caption{\QCTOptA}%
      \else
        \caption[\QCTOptB]{\QCTOptA}%
      \fi
    \fi
    \ifUnderFrame\else
      \label{#5}%
    \fi
  \else
    \UnderFrametrue%
  \fi
  \begin{center}\GRAPHIC{#7}{#6}{#2}{#3}{\z@}\end{center}%
  \ifUnderFrame
    #4
    \ifx\QCBOptA\empty
      \caption{}%
    \else
      \ifx\QCBOptB\empty
        \caption{\QCBOptA}%
      \else
        \caption[\QCBOptB]{\QCBOptA}%
      \fi
    \fi
    \label{#5}%
  \fi
  \end{figure}%
 }%
\def\makeactives{
  \catcode`\"=\active
  \catcode`\;=\active
  \catcode`\:=\active
  \catcode`\'=\active
  \catcode`\~=\active
}
   \gdef\activesoff{%
      \def"{\string"}
      \def;{\string;}
      \def:{\string:}
      \def'{\string'}
      \def~{\string~}
    }
\def\FRAME#1#2#3#4#5#6#7#8{%
 \bgroup
 \@ifundefined{bbl@deactivate}{}{\activesoff}
 \ifnum\draft=\@ne
   \wasdrafttrue
 \else
   \wasdraftfalse%
 \fi
 \def\LaTeXparams{}%
 \dispkind=\z@
 \def\LaTeXparams{}%
 \doFRAMEparams{#1}%
 \ifnum\dispkind=\z@\IFRAME{#2}{#3}{#4}{#7}{#8}{#5}\else
  \ifnum\dispkind=\@ne\DFRAME{#2}{#3}{#7}{#8}{#5}\else
   \ifnum\dispkind=\tw@
    \edef\@tempa{\noexpand\FFRAME{\LaTeXparams}}%
    \@tempa{#2}{#3}{#5}{#6}{#7}{#8}%
    \fi
   \fi
  \fi
  \ifwasdraft\draft=1\else\draft=0\fi{}%
  \egroup
 }%
\def\TEXUX#1{"texux"}
\long\def\QQQ#1#2{%
     \long\expandafter\def\csname#1\endcsname{#2}}%
\long\def\QQA#1#2{}%
\def\QTR#1#2{{\csname#1\endcsname #2}}
\def\EXPAND#1[#2]#3{}%
\def\NOEXPAND#1[#2]#3{}%
\def\LaTeXparent#1{}%
\def\ChildStyles#1{}%
\def\ChildDefaults#1{}%
\def\QTagDef#1#2#3{}%
\def\QQfnmark#1{\footnotemark}
\def\makeatletter\input gnuindex.sty\makeatother\makeindex{\makeatletter\input gnuindex.sty\makeatother\makeindex}%
\def\initial#1{\bigbreak{\raggedright\large\bf #1}\kern 2\p@\penalty3000}}%
 \def\abstract{%
  \if@twocolumn
   \section*{Abstract (Not appropriate in this style!)}%
   \else \small 
   \begin{center}{\bf Abstract\vspace{-.5em}\vspace{\z@}}\end{center}%
   \quotation 
   \fi
  }%
   \def\registered{\relax\ifmmode{}\r@gistered
                    \else$\m@th\r@gistered$\fi}%
 \def\r@gistered{^{\ooalign
  {\hfil\raise.07ex\hbox{$\scriptstyle\rm\text{R}$}\hfil\crcr
  \mathhexbox20D}}}}{}%
\newdimen\theight
\def\Column{%
 \vadjust{\setbox\z@=\hbox{\scriptsize\quad\quad tcol}%
  \theight=\ht\z@\advance\theight by \dp\z@\advance\theight by \lineskip
  \kern -\theight \vbox to \theight{%
   \rightline{\rlap{\box\z@}}%
   \vss
   }%
  }%
 }%
\def\qed{%
 \ifhmode\unskip\nobreak\fi\ifmmode\ifinner\else\hskip5\p@\fi\fi
 \hbox{\hskip5\p@\vrule width4\p@ height6\p@ depth1.5\p@\hskip\p@}%
 }%
\def\miss{\hbox{\vrule height2\p@ width 2\p@ depth\z@}}%
\def\tcol#1{{\baselineskip=6\p@ \vcenter{#1}} \Column}  %
\def\newfmtname{LaTeX2e}
\def\chkcompat{%
   \if@compatibility
   \else
     \usepackage{latexsym}
   \fi
}
  \DeclareOldFontCommand{\rm}{\normalfont\rmfamily}{\mathrm}
  \DeclareOldFontCommand{\sf}{\normalfont\sffamily}{\mathsf}
  \DeclareOldFontCommand{\tt}{\normalfont\ttfamily}{\mathtt}
  \DeclareOldFontCommand{\bf}{\normalfont\bfseries}{\mathbf}
  \DeclareOldFontCommand{\it}{\normalfont\itshape}{\mathit}
  \DeclareOldFontCommand{\sl}{\normalfont\slshape}{\@nomath\sl}
  \DeclareOldFontCommand{\sc}{\normalfont\scshape}{\@nomath\sc}
\def\alpha{{\Greekmath 010B}}%
\def\beta{{\Greekmath 010C}}%
\def\gamma{{\Greekmath 010D}}%
\def\delta{{\Greekmath 010E}}%
\def\epsilon{{\Greekmath 010F}}%
\def\zeta{{\Greekmath 0110}}%
\def\eta{{\Greekmath 0111}}%
\def\theta{{\Greekmath 0112}}%
\def\iota{{\Greekmath 0113}}%
\def\kappa{{\Greekmath 0114}}%
\def\lambda{{\Greekmath 0115}}%
\def\mu{{\Greekmath 0116}}%
\def\nu{{\Greekmath 0117}}%
\def\xi{{\Greekmath 0118}}%
\def\pi{{\Greekmath 0119}}%
\def\rho{{\Greekmath 011A}}%
\def\sigma{{\Greekmath 011B}}%
\def\tau{{\Greekmath 011C}}%
\def\upsilon{{\Greekmath 011D}}%
\def\phi{{\Greekmath 011E}}%
\def\chi{{\Greekmath 011F}}%
\def\psi{{\Greekmath 0120}}%
\def\omega{{\Greekmath 0121}}%
\def\varepsilon{{\Greekmath 0122}}%
\def\vartheta{{\Greekmath 0123}}%
\def\varpi{{\Greekmath 0124}}%
\def\varrho{{\Greekmath 0125}}%
\def\varsigma{{\Greekmath 0126}}%
\def\varphi{{\Greekmath 0127}}%
\def\nabla{{\Greekmath 0272}}
\def\FindBoldGroup{%
   {\setbox0=\hbox{$\mathbf{x\global\edef\theboldgroup{\the\mathgroup}}$}}%
}
\def\Greekmath#1#2#3#4{%
    \if@compatibility
        \ifnum\mathgroup=\symbold
           \mathchoice{\mbox{\boldmath$\displaystyle\mathchar"#1#2#3#4$}}%
                      {\mbox{\boldmath$\textstyle\mathchar"#1#2#3#4$}}%
                      {\mbox{\boldmath$\scriptstyle\mathchar"#1#2#3#4$}}%
                      {\mbox{\boldmath$\scriptscriptstyle\mathchar"#1#2#3#4$}}%
        \else
           \mathchar"#1#2#3#4%
        \fi 
    \else 
        \FindBoldGroup
        \ifnum\mathgroup=\theboldgroup 
           \mathchoice{\mbox{\boldmath$\displaystyle\mathchar"#1#2#3#4$}}%
                      {\mbox{\boldmath$\textstyle\mathchar"#1#2#3#4$}}%
                      {\mbox{\boldmath$\scriptstyle\mathchar"#1#2#3#4$}}%
                      {\mbox{\boldmath$\scriptscriptstyle\mathchar"#1#2#3#4$}}%
        \else
           \mathchar"#1#2#3#4%
        \fi     	    
	  \fi}
\newif\ifGreekBold  \GreekBoldfalse
\let\SAVEPBF=\pbf
\def\pbf{\GreekBoldtrue\SAVEPBF}%
  \newcounter{equationnumber}  
  \def\mathletters{%
     \addtocounter{equation}{1}
     \edef\@currentlabel{\theequation}%
     \setcounter{equationnumber}{\c@equation}
     \setcounter{equation}{0}%
     \edef\theequation{\@currentlabel\noexpand\alph{equation}}%
  }
    \def\BibTeX{{\rm B\kern-.05em{\sc i\kern-.025em b}\kern-.08em
                 T\kern-.1667em\lower.7ex\hbox{E}\kern-.125emX}}}{}%
\def\AmS{{\protect\usefont{OMS}{cmsy}{m}{n}%
                A\kern-.1667em\lower.5ex\hbox{M}\kern-.125emS}}}{}%
\let\DOTSI\relax
\def\RIfM@{\relax\ifmmode}%
\def\FN@{\futurelet\next}%
\def\iint{\DOTSI\intno@\tw@\FN@\ints@}%
\def\iiint{\DOTSI\intno@\thr@@\FN@\ints@}%
\def\iiiint{\DOTSI\intno@4 \FN@\ints@}%
\def\idotsint{\DOTSI\intno@\z@\FN@\ints@}%
\def\ints@{\findlimits@\ints@@}%
\newif\iflimtoken@
\newif\iflimits@
\def\findlimits@{\limtoken@true\ifx\next\limits\limits@true
 \else\ifx\next\nolimits\limits@false\else
 \limtoken@false\ifx\ilimits@\nolimits\limits@false\else
 \ifinner\limits@false\else\limits@true\fi\fi\fi\fi}%
\def\multint@{\int\ifnum\intno@=\z@\intdots@                          
 \else\intkern@\fi                                                    
 \ifnum\intno@>\tw@\int\intkern@\fi                                   
 \ifnum\intno@>\thr@@\int\intkern@\fi                                 
 \int}
\def\multintlimits@{\intop\ifnum\intno@=\z@\intdots@\else\intkern@\fi
 \ifnum\intno@>\tw@\intop\intkern@\fi
 \ifnum\intno@>\thr@@\intop\intkern@\fi\intop}%
\def\intic@{%
    \mathchoice{\hskip.5em}{\hskip.4em}{\hskip.4em}{\hskip.4em}}%
\def\negintic@{\mathchoice
 {\hskip-.5em}{\hskip-.4em}{\hskip-.4em}{\hskip-.4em}}%
\def\ints@@{\iflimtoken@                                              
 \def\ints@@@{\iflimits@\negintic@
   \mathop{\intic@\multintlimits@}\limits                             
  \else\multint@\nolimits\fi                                          
  \eat@}
 \else                                                                
 \def\ints@@@{\iflimits@\negintic@
  \mathop{\intic@\multintlimits@}\limits\else
  \multint@\nolimits\fi}\fi\ints@@@}%
\def\intkern@{\mathchoice{\!\!\!}{\!\!}{\!\!}{\!\!}}%
\def\plaincdots@{\mathinner{\cdotp\cdotp\cdotp}}%
\def\intdots@{\mathchoice{\plaincdots@}%
 {{\cdotp}\mkern1.5mu{\cdotp}\mkern1.5mu{\cdotp}}%
 {{\cdotp}\mkern1mu{\cdotp}\mkern1mu{\cdotp}}%
 {{\cdotp}\mkern1mu{\cdotp}\mkern1mu{\cdotp}}}%
\def\RIfM@{\relax\protect\ifmmode}
\def\text{\RIfM@\expandafter\text@\else\expandafter\mbox\fi}
\let\nfss@text\text
\def\text@#1{\mathchoice
   {\textdef@\displaystyle\f@size{#1}}%
   {\textdef@\textstyle\tf@size{\firstchoice@false #1}}%
   {\textdef@\textstyle\sf@size{\firstchoice@false #1}}%
   {\textdef@\textstyle \ssf@size{\firstchoice@false #1}}%
   \glb@settings}
\def\textdef@#1#2#3{\hbox{{%
                    \everymath{#1}%
                    \let\f@size#2\selectfont
                    #3}}}
\newif\iffirstchoice@
\def\Let@{\relax\iffalse{\fi\let\\=\cr\iffalse}\fi}%
\def\vspace@{\def\vspace##1{\crcr\noalign{\vskip##1\relax}}}%
\def\multilimits@{\bgroup\vspace@\Let@
 \baselineskip\fontdimen10 \scriptfont\tw@
 \advance\baselineskip\fontdimen12 \scriptfont\tw@
 \lineskip\thr@@\fontdimen8 \scriptfont\thr@@
 \lineskiplimit\lineskip
 \vbox\bgroup\ialign\bgroup\hfil$\m@th\scriptstyle{##}$\hfil\crcr}%
\def\Sb{_\multilimits@}%
\def\endSb{\crcr\egroup\egroup\egroup}%
\def\Sp{^\multilimits@}%
\newdimen\ex@
\def\rightarrowfill@#1{$#1\m@th\mathord-\mkern-6mu\cleaders
 \hbox{$#1\mkern-2mu\mathord-\mkern-2mu$}\hfill
 \mkern-6mu\mathord\rightarrow$}%
\def\leftarrowfill@#1{$#1\m@th\mathord\leftarrow\mkern-6mu\cleaders
 \hbox{$#1\mkern-2mu\mathord-\mkern-2mu$}\hfill\mkern-6mu\mathord-$}%
\def\leftrightarrowfill@#1{$#1\m@th\mathord\leftarrow
\mkern-6mu\cleaders
 \hbox{$#1\mkern-2mu\mathord-\mkern-2mu$}\hfill
 \mkern-6mu\mathord\rightarrow$}%
\def\overrightarrow{\mathpalette\overrightarrow@}%
\def\overrightarrow@#1#2{\vbox{\ialign{##\crcr\rightarrowfill@#1\crcr
 \noalign{\kern-\ex@\nointerlineskip}$\m@th\hfil#1#2\hfil$\crcr}}}%
\def\overleftarrow{\mathpalette\overleftarrow@}%
\def\overleftarrow@#1#2{\vbox{\ialign{##\crcr\leftarrowfill@#1\crcr
 \noalign{\kern-\ex@\nointerlineskip}$\m@th\hfil#1#2\hfil$\crcr}}}%
\def\overleftrightarrow{\mathpalette\overleftrightarrow@}%
\def\overleftrightarrow@#1#2{\vbox{\ialign{##\crcr
   \leftrightarrowfill@#1\crcr
 \noalign{\kern-\ex@\nointerlineskip}$\m@th\hfil#1#2\hfil$\crcr}}}%
\def\underrightarrow{\mathpalette\underrightarrow@}%
\def\underrightarrow@#1#2{\vtop{\ialign{##\crcr$\m@th\hfil#1#2\hfil
  $\crcr\noalign{\nointerlineskip}\rightarrowfill@#1\crcr}}}%
\def\underleftarrow{\mathpalette\underleftarrow@}%
\def\underleftarrow@#1#2{\vtop{\ialign{##\crcr$\m@th\hfil#1#2\hfil
  $\crcr\noalign{\nointerlineskip}\leftarrowfill@#1\crcr}}}%
\def\underleftrightarrow{\mathpalette\underleftrightarrow@}%
\def\underleftrightarrow@#1#2{\vtop{\ialign{##\crcr$\m@th
  \hfil#1#2\hfil$\crcr
 \noalign{\nointerlineskip}\leftrightarrowfill@#1\crcr}}}%
\def\qopnamewl@#1{\mathop{\operator@font#1}\nlimits@}
\let\nlimits@\displaylimits
\def\setboxz@h{\setbox\z@\hbox}
\def\varlim@#1#2{\mathop{\vtop{\ialign{##\crcr
 \hfil$#1\m@th\operator@font lim$\hfil\crcr
 \noalign{\nointerlineskip}#2#1\crcr
 \noalign{\nointerlineskip\kern-\ex@}\crcr}}}}
 \def\rightarrowfill@#1{\m@th\setboxz@h{$#1-$}\ht\z@\z@
  $#1\copy\z@\mkern-6mu\cleaders
  \hbox{$#1\mkern-2mu\box\z@\mkern-2mu$}\hfill
  \mkern-6mu\mathord\rightarrow$}
\def\leftarrowfill@#1{\m@th\setboxz@h{$#1-$}\ht\z@\z@
  $#1\mathord\leftarrow\mkern-6mu\cleaders
  \hbox{$#1\mkern-2mu\copy\z@\mkern-2mu$}\hfill
  \mkern-6mu\box\z@$}
\def\projlim{\qopnamewl@{proj\,lim}}
\def\injlim{\qopnamewl@{inj\,lim}}
\def\varinjlim{\mathpalette\varlim@\rightarrowfill@}
\def\varprojlim{\mathpalette\varlim@\leftarrowfill@}
\def\varliminf{\mathpalette\varliminf@{}}
\def\varliminf@#1{\mathop{\underline{\vrule\@depth.2\ex@\@width\z@
   \hbox{$#1\m@th\operator@font lim$}}}}
\def\varlimsup{\mathpalette\varlimsup@{}}
\def\varlimsup@#1{\mathop{\overline
  {\hbox{$#1\m@th\operator@font lim$}}}}
\def\align{\@verbatim \frenchspacing\@vobeyspaces \@alignverbatim
You are using the "align" environment in a style in which it is not defined.}
\let\csname endalign*\endcsname =\endtrivlist
\def\alignat{\@verbatim \frenchspacing\@vobeyspaces \@alignatverbatim
You are using the "alignat" environment in a style in which it is not defined.}
\let\csname endalignat*\endcsname =\endtrivlist
\def\xalignat{\@verbatim \frenchspacing\@vobeyspaces \@xalignatverbatim
You are using the "xalignat" environment in a style in which it is not defined.}
\let\csname endxalignat*\endcsname =\endtrivlist
\def\gather{\@verbatim \frenchspacing\@vobeyspaces \@gatherverbatim
You are using the "gather" environment in a style in which it is not defined.}
\let\csname endgather*\endcsname =\endtrivlist
\def\multiline{\@verbatim \frenchspacing\@vobeyspaces \@multilineverbatim
You are using the "multiline" environment in a style in which it is not defined.}
\let\csname endmultiline*\endcsname =\endtrivlist
\def\arrax{\@verbatim \frenchspacing\@vobeyspaces \@arraxverbatim
You are using a type of "array" construct that is only allowed in AmS-LaTeX.}
\def\tabulax{\@verbatim \frenchspacing\@vobeyspaces \@tabulaxverbatim
You are using a type of "tabular" construct that is only allowed in AmS-LaTeX.}
\let\csname endarrax*\endcsname =\endtrivlist
\let\csname endtabulax*\endcsname =\endtrivlist
\def\@@eqncr{\let\@tempa\relax
    \ifcase\@eqcnt \def\@tempa{& & &}\or \def\@tempa{& &}%
      \else \def\@tempa{&}\fi
     \@tempa
     \if@eqnsw
        \iftag@
           \@taggnum
        \else
           \@eqnnum\stepcounter{equation}%
        \fi
     \fi
     \global\tag@false
     \global\@eqnswtrue
     \global\@eqcnt\z@\cr}
 \def\endequation{%
     \ifmmode\ifinner 
      \iftag@
        \addtocounter{equation}{-1} 
        $\hfil
           \displaywidth\linewidth\@taggnum\egroup \endtrivlist
        \global\tag@false
        \global\@ignoretrue   
      \else
        $\hfil
           \displaywidth\linewidth\@eqnnum\egroup \endtrivlist
        \global\tag@false
        \global\@ignoretrue 
      \fi
     \else   
      \iftag@
        \addtocounter{equation}{-1} 
        \eqno \hbox{\@taggnum}
        \global\tag@false%
        $$\global\@ignoretrue
      \else
        \eqno \hbox{\@eqnnum}
        $$\global\@ignoretrue
      \fi
     \fi\fi
 } 
 \newif\iftag@ \tag@false
 \def\tag{\@ifnextchar*{\@tagstar}{\@tag}}
 \def\@tag#1{%
     \global\tag@true
     \global\def\@taggnum{(#1)}}
 \def\@tagstar*#1{%
     \global\tag@true
     \global\def\@taggnum{#1}%
}
\keywords{Transmission problem, rough interface, nonlocal dynamic boundary conditions, existence and regularity of solutions, attractors, blow up.}
\subjclass[2010]{35J92, 35A15, 35B41, 35K65}
\begin{document}
\title[Transmission problems]{Transmission problems with nonlocal boundary
conditions and rough dynamic interfaces}
\author{Ciprian G. Gal}
\address{C. G. Gal, Department of Mathematics, Florida International
University, Miami, 33199 (USA)}
\email{cgal@fiu.edu}
\author{Mahamadi Warma}
\address{M.~Warma, University of Puerto Rico, Faculty of Natural Sciences,
Department of Mathematics (Rio Piedras Campus), PO Box 70377 San Juan PR
00936-8377 (USA)}
\email{mahamadi.warma1@upr.edu, mjwarma@gmail.com}
\thanks{The research of the second author was partially supported by the
AFOSR Grant FA9550-15-1-0027}

\begin{abstract}
We consider a transmission problem consisting of a semilinear parabolic
equation in a general non-smooth setting with emphasis on rough interfaces
which bear a fractal-like geometry and nonlinear dynamic (possibly,
nonlocal)\ boundary conditions along the interface. We give a unified
framework for existence of strong solutions, existence of finite dimensional
attractors and blow-up phenomena for solutions under general conditions on
the bulk and interfacial nonlinearities with competing behavior at infinity.
\end{abstract}

\maketitle

\section{Introduction}

In this paper we aim to give a unified framework for a general class of
transmission problems of the form%
\begin{equation}
\partial _{t}u-\text{div}\left( \mathbf{D}\nabla u\right) +f\left( u\right)
=0,\;\;\;\text{ in }\;\,J\times (\Omega \backslash \Sigma ),  \label{p1}
\end{equation}%
with $J=(0,T)$, $f$ is a nonlinear function which can be either a source or
a sink, and the matrix $\mathbf{D=}\left( d_{i,j}\left( x\right) \right) $
is symmetric, bounded, measurable and non-degenerate such that
\begin{equation*}
\left\langle \mathbf{Dv,v}\right\rangle _{\mathbb{R}^{N}}\geq
d_{0}\left\vert \mathbf{v}\right\vert _{\mathbb{R}^{N}}^{2},\;%
\mbox{ for any
}\;\mathbf{v}\in {\mathbb{R}}^{N}\;\mbox{ with some
constant }\;d_{0}>0.
\end{equation*}%
In \eqref{p1}, $\Omega \subset {\mathbb{R}}^{N}$ is a bounded domain (open
and connected) with Lipschitz continuous boundary $\partial \Omega $ that is
disjointly decomposed into a Dirichlet part and a Neumann part, $\partial
\Omega =\Gamma _{N}\cup \Gamma _{D}.$ More precisely, on $\partial \Omega $
we consider Dirichlet and Neumann boundary conditions:%
\begin{equation}
u=0\text{ on }J\times \Gamma _{D},\text{ }(\mathbf{D}\nabla u)\cdot \nu =0%
\text{ on }J\times \Gamma _{N},  \label{p2}
\end{equation}%
where $\nu $ denotes the unit outer normal vector on $\partial \Omega $.
Moreover, in \eqref{p1}, $\Sigma $ is a $d$-dimensional fractal-like
"surface" contained in $\Omega \subset {\mathbb{R}}^{N}$ with $d\in
(N-2,N)\cap (0,N)$. We assume that $\mathcal{H}^{d}(\Sigma )<\infty $ where $%
\mathcal{H}^{d}$ denotes the $d$-dimensional Hausdorff measure and we denote
by $\mu _{\Sigma }$ the restriction of $\mathcal{H}^{d}$ to the set $\Sigma $%
. Our bounded open set $\Omega \subset \mathbb{R}^{N}$ and $\Sigma $ are
such that $\Omega =\Omega _{1}\cup \Sigma \cup \Omega _{2}$ with $\Sigma =%
\overline{\Omega }_{1}\cap \overline{\Omega }_{2}$. That is, $\Omega $ is
divided into two domains $\Omega _{1}$ and $\Omega _{2}$ with $\Omega
_{1}\cap \Omega _{2}=\emptyset $ and $\Sigma $ is a $d$-dimensional surface
lying strictly in $\Omega $. We notice that $\partial (\Omega \setminus
\Sigma )=\partial \Omega \cup \Sigma $. We make the following convention for
a function $u$ defined on $\Omega \setminus \Sigma $:
\begin{equation*}
u(x)=u_{+}(x),\;x\in \Omega _{+}:=\Omega _{1}\;\;\mbox{ and }%
\;\;u(x)=u_{-}(x),\;\;x\in \Omega _{-}:=\Omega _{2}
\end{equation*}%
and we will apply the same principle to other functions defined over $\Omega
\setminus \Sigma $.

Before we give the boundary conditions that we shall consider on $\Sigma$,
we have to introduce first a generalized version of a weak normal derivative
of a function $u$ on the interface $\Sigma$. We notice that the definition
given below is a version of the one introduced in \cite{BW2,Wa}. Let $\sigma
$ denote the surface measure on $\partial \Omega =\Gamma _{D}\cup \Gamma
_{N} $, that is, the restriction to $\partial\Omega$ of the $(N-1)$%
-dimensional Hausdorff measure, and recall that $\nu $ is well defined as a
unit normal vector on $\partial \Omega $. Since by assumption $\Sigma $ may
be so irregular that no normal vector $\nu _{\Sigma }$ can be defined on $%
\Sigma $, we will use the following generalized version of a normal
derivative of a function on $\Sigma$. Let $\eta$ be a signed Radon measure
on $\Sigma$ and $F:\overline{\Omega }\rightarrow {\mathbb{R}}^{N}$ a
measurable function. If there exists a function $g\in L_{loc}^{1}({\mathbb{R}%
}^{N})$ satisfying%
\begin{equation}
\int_{\Omega \backslash \Sigma }F\cdot \nabla \varphi dx=\int_{\Omega
\backslash \Sigma }g\varphi dx+\int_{\partial \Omega }(F\cdot \nu)\varphi\;
d\sigma +\int_{\Sigma }\varphi d\eta  \label{NE}
\end{equation}%
for all $\varphi \in C^{1}(\overline{\Omega })$, then we say that $\eta$ is
the normal measure of $F$ and we denote $N_{\Sigma}^\star(F):=\eta$. If the
normal measure $N_{\Sigma }^{\star }(F)$ exists, then it is unique and $%
dN_{\Sigma }^{\star }(\psi F)=\psi dN_{\Sigma }^{\star }(F)$ for all $\psi
\in C^{1}(\overline{\Omega })$. If $u\in W_{loc}^{1,1}(\Omega \backslash
\Sigma )$ and $N_{\Sigma }^{\star }(\mathbf{D}\nabla u)$ exists, then we
will denote by $N_{\mathbf{D}}(u):=N_{\Sigma }^{\star }(\mathbf{D}\nabla u)$
and call it the generalized $\mathbf{D}$-normal derivative of $u$ on $\Sigma$%
.

\begin{figure}[h]
\centering
\includegraphics[scale=0.50]{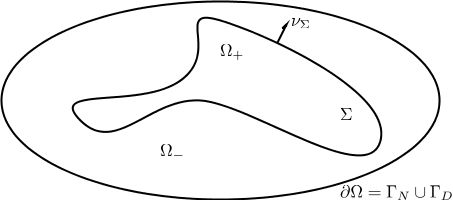}
\caption{Example of Domain $\Omega =\Omega _+\cup \Sigma \cup \Omega _-$.}
\end{figure}

To justify this definition, assume that $\Sigma $ is a Lipschitz
hypersurface of dimension $\left( N-1\right) $. Then the measure $\mu
_{\Sigma }=\sigma _{\Sigma }$, the usual Lebesgue surface measure on $\Sigma$%
. Let $\nu_\Sigma$ be the exterior normal vector to $\Omega_1:=\Omega_+$ at $%
\Sigma$. Then the exterior normal vector to $\Omega_2:=\Omega_-$ at $\Sigma$
should be $-\nu_{\Sigma}$ (see Figure 1). Let $u\in
W^{2,2}(\Omega\setminus\Sigma)$ be such that there are $g\in L_{loc}^{1}({%
\mathbb{R}}^{N})$ and $\tilde g\in L^{1}(\Sigma ,\sigma _{\Sigma })$
satisfying
\begin{align}  \label{NE-2}
\int_{\Omega \backslash \Sigma }\mathbf{D}\nabla u\cdot \nabla \varphi dx=
\int_{\Omega \backslash \Sigma }g\varphi dx+\int_{\partial \Omega }\left(
\mathbf{D}\nabla u\cdot \nu \right) \varphi d\sigma +\int_{\Sigma }\tilde
g\varphi\;d\sigma _{\Sigma }
\end{align}%
for all $\varphi \in C^{1}(\overline{\Omega })$. On the other hand, since $%
u\in W^{2,2}(\Omega\setminus\Sigma)$, then using the classical Green formula
(recall that we are in the situation where $\Omega\setminus\Sigma$ has a
Lipschitz continuous boundary), we have that for all $\varphi \in C^{1}(%
\overline{\Omega })$,
\begin{align}  \label{NE-3}
\int_{\Omega \backslash \Sigma }\mathbf{D}\nabla u\cdot \nabla \varphi
dx=&-\int_{\Omega\setminus\Sigma }\mbox{div}(\mathbf{D}\nabla u)\varphi
dx+\int_{\partial \Omega }\left( \mathbf{D}\nabla u\cdot \nu \right) \varphi
d\sigma \\
&+\int_{\Sigma }\varphi \left( \mathbf{D}_+\nabla u_{+}-\mathbf{D}_-\nabla
u_{-}\right)\cdot \nu _{\Sigma } d\sigma _{\Sigma } .  \notag
\end{align}
Moreover, $\mbox{div}(\mathbf{D}\nabla u)\in L^2(\Omega\setminus\Sigma)$, $%
\left( \mathbf{D}\nabla u\cdot \nu \right)\in L^2(\partial\Omega)$ and $%
\left( \mathbf{D}_+\nabla u_{+}-\mathbf{D}_-\nabla u_{-}\right)\cdot \nu
_{\Sigma }\in L^2(\Sigma)$. It follows from \eqref{NE-2} and \eqref{NE-3}
that in this case, $g=-\mbox{div}(\mathbf{D}\nabla u)$ and $\tilde g=\left(%
\mathbf{D}\nabla u_{+}-\mathbf{D}\nabla u_{-}\right)\cdot \nu _{\Sigma }$ so
that
\begin{align*}
dN_{\mathbf{D }}(u):=dN_{\Sigma}^\star(\mathbf{D}\nabla u)= \tilde gd\sigma
_{\Sigma }=\left(\left( \mathbf{D}_+\nabla u_{+}-\mathbf{D}_-\nabla
u_{-}\right)\cdot \nu _{\Sigma }\right)d\sigma _{\Sigma }\;\;\mbox{ on }%
\;\Sigma,
\end{align*}
and hence,
\begin{align*}
\frac{dN_{\mathbf{D }}(u)}{d\sigma_{\Sigma}}=\left(\mathbf{D}_+\nabla u_{+}-%
\mathbf{D}_-\nabla u_{-}\right)\cdot \nu _{\Sigma }\;\;\mbox{ on }\;\Sigma.
\end{align*}
We will always use the generalized Green type identity \eqref{NE} and the
generalized $\mathbf{D}$-normal derivative of a function $u$ on $\Sigma$
introduced above.

Now, on the interfacial region $\Sigma $ we impose the dynamic boundary
condition formally given by
\begin{equation}
dN_{\mathbf{D}}(u)+\left( \delta \partial _{t}u+\beta \left( x\right)
u+\Theta _{{\Sigma }}\left( u\right) \right) d\mu _{\Sigma }=h\left(
u\right) d\mu _{\Sigma },\;\;\text{ on }\;\,J\times \Sigma ,  \label{m2}
\end{equation}%
where we are primarily interested in nonlinear sources at the interface $%
\Sigma $ such that%
\begin{equation}
\underset{\left\vert \tau \right\vert \rightarrow \infty }{\lim \inf }\frac{%
h\left( \tau \right) }{\tau }=\infty .  \label{bad-h}
\end{equation}%
The function $\beta \in L^{\infty }(\Sigma ,\mu _{\Sigma })$ satisfies $%
\beta (x)\geq 0$ for $\mu _{\Sigma }$-a.e. $x\in \Sigma $ but it is \emph{%
not identically} equal to zero on $\Sigma $ and $\delta \geq 0$ is a real
number. Note that the condition on $h$ implies that $h$ has a bad sign and
is of superlinear growth at infinity (i.e., $h\left( \tau \right) \sim
c_{h}\left\vert \tau \right\vert ^{\alpha }\tau ,$ as $\left\vert \tau
\right\vert \rightarrow \infty $, for some $\alpha >0$ and $c_{h}>0$). In (%
\ref{m2}), we define $\Theta _{{\Sigma }}$ as a non-local operator in the
following fashion (see Section \ref{sub-22} below):
\begin{equation}
\langle \Theta _{{\Sigma }}(u),v\rangle :=\int \int_{\Sigma \times \Sigma
}K(x,y)(u(x)-u(y))(v(x)-v(y))d\mu _{\Sigma }\left( x\right) d\mu _{\Sigma
}\left( y\right) ,  \label{nonlocal-op}
\end{equation}%
for functions $u,v\in \widetilde{W}^{1,2}(\Omega )$ such that $\Theta
_{\Sigma }(u)\in L^{2}(\Sigma ,d\mu _{\Sigma })$ and
\begin{equation*}
\int \int_{\Sigma \times \Sigma }K(x,y)|u(x)-u(y)|^{2}d\mu _{\Sigma }(x)d\mu
_{\Sigma }(y),\;\;\int \int_{\Sigma \times \Sigma }K(x,y)|v(x)-v(y)|^{2}d\mu
_{\Sigma }(x)d\mu _{\Sigma }(y)<\infty ,
\end{equation*}%
where the kernel $K:\Sigma \times \Sigma \rightarrow \mathbb{R}_{+}$ is
symmetric and satisfies
\begin{equation*}
c_{0}\leq K\left( x,y\right) \left\vert x-y\right\vert ^{d+2s}\leq c_{1},
\end{equation*}%
for all $x,y\in \Sigma $, $x\neq y$, for some constants $c_{0},c_{1}>0$ and $%
s\in \left( 0,1\right) $. A basic example is $K\left( x,y\right) =\left\vert
x-y\right\vert ^{-\left( d+2s\right) }$. In this case, $\Theta _{\Sigma
}=\left( -\Delta _{\Sigma }\right) ^{s}$ is a nonlocal operator
characterizing the presence of anomalous "fractional" diffusion along $%
\Sigma $. The $\mathbf{D}$-normal derivative $N_{\mathbf{D}}(u)$ of $u$ in %
\eqref{m2} is understood in the sense of \eqref{NE}. We mention that if $%
\Sigma $ is a Lipschitz hypersurface of dimension $(N-1)$, then the
condition \eqref{m2} reads
\begin{equation*}
\left( \mathbf{D}_{+}\nabla u_{+}-\mathbf{D}_{-}\nabla u_{-}\right) \cdot
\nu _{\Sigma }+\left( \delta \partial _{t}u+\beta \left( x\right) u+\Theta _{%
{\Sigma }}\left( u\right) \right) =h\left( u\right) ,\;\;\text{ on }%
\,\;J\times \Sigma .
\end{equation*}%
Finally, initial conditions for \eqref{p1}, \eqref{p2} and \eqref{m2} must
also be prescribed on $\Omega \backslash \Sigma $ and on $\Sigma ,$
respectively.

The interface problem \eqref{p1}, \eqref{p2} and \eqref{m2} with a more
simple transmission condition (\ref{m2}) on $\Sigma $, i.e. when $\delta =0,$
$\Theta _{{\Sigma }}\equiv 0$ and $h\equiv 0$, is often encountered in the
literature in heat transfer phenomena, fluid dynamics and material science,
and in electrostatics and magnetostatics. It is the usual case when two
bodies materials or fluids with different conductivities or diffusions are
involved. The corresponding transmission problem for the linear elliptic
equation assuming that the interface $\Sigma $ is at least Lipschitz smooth
and $\Theta _{{\Sigma }}\equiv 0$, $h\equiv 0$, has been considered by a
large group of mathematicians including O.A. Ladyzhenskaya and N.N.
Ural'tseva since the 1970's (see \cite{ER, HZ}). We also refer the reader to
the recent book of Borsuk \cite{B}. Under the same foregoing assumptions the
(linear) parabolic transmission problem has also been investigated recently
in \cite{BR, GK,HZ, KK, Nom} (and the references therein) in the case when $%
\delta =0$ and $\Theta _{{\Sigma }}\equiv 0,h\equiv 0$. The transmission
problem \eqref{p1} with a linear dynamic (i.e., when $\delta >0$)
transmission condition on a smooth surface $\Sigma $ (so it can be
flattened) and its influence of the solution was studied in \cite{BN}\ in
the autonomous case. Finally, a linear interface problem with linear dynamic
transmission condition involving the surface diffusion $\Delta _{\Sigma }$
on $\Sigma $ and allowing both cases in which $\mathbf{D}$ is nondegenerate
and degenerate, and when the interface $\Sigma $ is a $\left(N-1\right) $%
-dimensional Lipschitz hypersurface was also considered in \cite{EMR2,EMR}.
When $\Sigma $ is rough and fractal-like has also attracted considerable
interest over the recent years due to their importance in various
engineering, physical and natural processes, such as, \textquotedblleft
hydraulic fracturing\textquotedblright , a frequently used engineering
method to increase the flow of oil from a reservoir into a producing oil
well \cite{Ca, HP}, current flow through rough electrodes in
electrochemistry \cite{Cap, FS, TV} or diffusion and biological processes
across irregular physiological membranes \cite{La1, La2, La3, La4}.

It is our goal to give a unified analysis of these transmission problems for
a large class of fractal-like interfaces, to go beyond the present studies
which have focused mainly on the linear interface problem with linear
transmission conditions and mainly well-posedness like results. We will
derive stronger and sharper results in terms of existence, regularity and
stability of bounded solutions. Then we also show the existence of finite
dimensional attractors for the nonlinear transmission problem \eqref{p1}, %
\eqref{p2} and \eqref{m2} especially in non-dissipative situations in which
a bad source $h$ (via (\ref{bad-h})) is present along $\Sigma $ such that
energy is always fed in all of $\Omega \backslash \Sigma $ through $\Sigma $
but eventually dissipated completely by a nonlinear "good" source $f$. It
turns out that blow-up phenomena and global existence are strictly related
to competing conditions and the behavior of the nonlinearities $f,h$ at
infinity as $\left\vert \tau\right\vert \rightarrow \infty $. We state sharp
balance conditions between the bulk source/sink $f$ and interfacial source $%
h $ allowing us to directly compare them even when they are acting on
separate parts of the domain $\Omega ,$ on $\Omega \backslash \Sigma $ and
on $\Sigma $, respectively. Notably, similar ideas have been used by Gal
\cite{Gal0} in the treatment of parabolic $p$-Laplace equations with
nonlinear reactive-like dynamic boundary conditions (cf. also Bernal and
Tajdine \cite{BT} for parabolic problems subject to nonlinear Robin
conditions), and by Gal and Warma in \cite{GW2} to give a complete
characterization of the long-time behavior as time goes to infinity (in
terms of a finite dimensional global attractor, $\omega $-limit sets and
Lyapunov functions) for semilinear parabolic equations on rough domains
subject to nonlocal Robin boundary conditions. Since most of the
aforementioned applications have a real physical meaning in non-reflexive
Banach spaces, like $L^{1}\left( \Omega \right) $ or $L^{\infty }\left(
\Omega \right) $, our prefered notion of generalized solutions and nonlinear
solution semigroups will naturally be given in $L^{\infty }$-type spaces. It
is this context in which in fact our balance conditions on the
nonlinearities become also optimal and sharp in a certain sense. In this
work, we also develop a unified framework in order to resolve the
difficulties coming from having to deal with a rough fractal-like interface $%
\Sigma $, and develop new tools based on potential analysis to handle the
present case. This new approach allows us to not only overcome the
difficulties mentioned earlier, but also to do so using only elementary
tools from Sobolev spaces, nonlinear semigroup theory and dynamical systems
theory avoiding the use of sophisticated tools from harmonic analysis.

The remainder of the paper is structured as follows. In Section \ref{prelim}%
, we establish our notation and give some basic preliminary results for the
operators and spaces appearing in our transmission model. In Section \ref%
{strong-sol}, we prove some well-posedness results for this model; in
particular, we establish existence and stability results for strong
solutions. In Section \ref{gl}, we prove results which establish the
existence of global and exponential attractors for \eqref{p1}, \eqref{p2}
and \eqref
{m2}. Section \ref{bl} contains some new results on the blow-up of strong
solutions.

\section{Some generation of semigroup results}

\label{prelim}

In this section we introduce the functional framework associated with the
transmission problem in question and then derive semigroup type results for
the linear operator corresponding to the linear problem. All these tools are
necessary in the study of the nonlinear transmission problem \eqref{p1}, %
\eqref{p2} and \eqref{m2}.

\subsection{Functional framework}

Let $\Omega \subset {\mathbb{R}}^{N}$ be a bounded open set with boundary $%
\partial \Omega $ and $1\leq p<\infty $. We denote by
\begin{equation*}
W^{1,p}(\Omega ):=\{u\in L^{p}(\Omega ):\;\int_{\Omega }|\nabla
u|^{p}dx<\infty \}
\end{equation*}%
the first order Sobolev space endowed with the norm
\begin{equation*}
\Vert u\Vert _{W^{1,p}(\Omega )}:=\left( \int_{\Omega
}|u|^{p}\;dx+\int_{\Omega }|\nabla u|^{p}dx\right) ^{\frac{1}{p}}.
\end{equation*}%
We let
\begin{equation*}
\widetilde{W}^{1,p}(\Omega ):=\overline{W^{1,p}(\Omega )\cap C(\overline{%
\Omega })}^{W^{1,p}(\Omega )}\;\;\mbox{ and }\;W_{0}^{1,p}(\Omega ):=%
\overline{\mathcal{D}(\Omega )}^{W^{1,p}(\Omega )},
\end{equation*}
where $\mathcal{D}(\Omega)$ denotes the space of test functions on $\Omega$.
It is well-known that $\widetilde{W}^{1,p}(\Omega )$ is a proper closed
subspace of $W^{1,p}(\Omega )$ but the two spaces coincide if $\Omega $ has
a continuous boundary (see., e.g. \cite[Theorem 1 p.23]{MaPo}). Moreover, by
definition, $W_{0}^{1,p}(\Omega )$ is a closed subspace of $\widetilde{W}%
^{1,p}(\Omega )$ and hence, of $W^{1,p}(\Omega )$. Next, let $\Gamma \subset
\partial \Omega $ be a closed set. We denote by $W_{0,\Gamma }^{1,p}(\Omega
) $ the closure of the set%
\begin{equation*}
\{u\in W^{1,p}(\Omega )\cap C(\overline{\Omega }):\;u=0\;\mbox{ on }\;\Gamma
\}\;\;\text{ in }\;\;W^{1,p}(\Omega )
\end{equation*}%
so that $W_{0}^{1,p}(\Omega )$ coincides with $W_{0,\partial \Omega
}^{1,p}(\Omega )$.

\begin{definition}
Let $p\in [1,\infty)$ be fixed. We will say that a given open set $\Omega
\subset {\mathbb{R}}^{N}$ has the ${W}^{1,p}$-extension property, if for
every $u\in {W}^{1,p}(\Omega )$, there exists $U\in W^{1,p}({\mathbb{R}}%
^{N}) $ such that $U|_{\Omega }=u$. Similarly, we will say that $\Omega $
has the $\widetilde{W}^{1,p}$-extension property, if for every $u\in
\widetilde{W}^{1,p}(\Omega )$, there exists $U\in W^{1,p}({\mathbb{R}}^{N})$
such that $U|_{\Omega }=u$. In that case, the extension operator $%
E:\;W^{1,p}(\Omega )\rightarrow W^{1,p}({\mathbb{R}}^{N}) $ (resp., $%
\widetilde{E}:\;\widetilde{W}^{1,p}(\Omega )\rightarrow W^{1,p}({\mathbb{R}}%
^{N})$ is linear and bounded, that is, there exists a constant $C>0$ such
that for every $u\in W^{1,p}(\Omega )$ (resp., $u\in \widetilde{W}%
^{1,p}(\Omega )$),
\begin{equation*}
\Vert Eu\Vert _{W^{1,p}({\mathbb{R}}^{N})}=\Vert U\Vert _{W^{1,p}({\mathbb{R}%
}^{N})}\leq C\Vert u\Vert _{W^{1,p}(\Omega )}.
\end{equation*}%
(resp., $\Vert \widetilde{E}u\Vert _{W^{1,p}({\mathbb{R}}^{N})}=\Vert U\Vert
_{W^{1,p}({\mathbb{R}}^{N})}\leq C\Vert u\Vert _{W^{1,p}(\Omega )}$). See
\cite{HKT} for further details.
\end{definition}

If $\Omega $ has the $W^{1,p}$-extension property, then there exists a
constant $C>0$ such that for every $u\in W^{1,p}(\Omega )$,
\begin{equation}
\Vert u\Vert _{L^{q }(\Omega )}\leq C\Vert u\Vert _{W^{1,p}(\Omega )},\;\;q
\in \lbrack 1,p^{\star }],\;p^{\star }:=\frac{Np}{N-p},\;\mbox{ if }%
\;N>p,\;\;q \in \lbrack 1,\infty )\;\mbox{ if }\;N=p.  \label{sobo}
\end{equation}%
Moreover,
\begin{equation}  \label{sob-co}
W^{1,p}(\Omega )\hookrightarrow C^{0,1-\frac Np}(\overline{\Omega })\;\;%
\mbox{ if }\; N<p.
\end{equation}
The embedding \eqref{sobo} and \eqref{sob-co} remain valid with $%
W^{1,p}(\Omega )$ replaced by $\widetilde W^{1,p}(\Omega )$, if $\Omega $ is
assumed to have the $\widetilde W^{1,p}$-extension property. In particular,
all these statements are true if $\Omega $ has a Lipschitz continuous
boundary $\partial \Omega .$

\begin{remark}
\emph{We mention that if $\Omega$ has the $W^{1,p}$-extension property, then
$\widetilde{W}^{1,p}(\Omega)=W^{1,p}(\Omega)$ and hence, $\Omega$ has the $%
\widetilde W^{1,p}$-extension property. For instance, in two dimensions the
open set $\Omega :=B(0,1)\backslash \lbrack -1,1]\times \{0\}\subset\mathbb{R%
}^2$ enjoys the $\widetilde{W}^{1,p}$-extension property for any $p\in
\lbrack 1,\infty )$ but does not possess the $W^{1,p}$-extension property
(see e.g. \cite{BW}). In general if $p\in (1,\infty)$ and $\Omega \subset {%
\mathbb{R}}^{N}$ has the $W^{1,p}$-extension property and $F\subset \Omega $
is a relatively closed set with $\mathcal{H}^{N-1}(F)<\infty $ (i.e., the $%
(N-1)$-dimensional Hausdorff measure $\mathcal{H}^{N-1}$ of $F$ is finite),
then the open set $\Omega \backslash F$ has the $\widetilde{W}^{1,p}$%
-extension property. However, generally it may happen (as the case of the
two dimensional example given above) that $\Omega \backslash F$ does not
possess the $W^{1,p}$-extension property (see e.g. \cite{BW} for other
examples and for more details on this subject).}
\end{remark}

\begin{definition}
Let $F\subset {\mathbb{R}}^{N}$ be a compact set, $d\in (0,N]$ and $\mu $ a
regular Borel measure on $F$. We say that $\mu$ is an \emph{upper $d$%
-Ahlfors measure} if there exists a constant $C>0 $ such that for every $%
x\in F$ and every $r\in (0,1]$, one has
\begin{equation*}
\mu (B(x,r)\cap F)\leq Cr^{d}.
\end{equation*}
\end{definition}

\begin{remark}
\emph{Let $\Omega \subset \mathbb{R}^{N}$ be an arbitrary bounded domain
with boundary $\partial\Omega$. Let $1<p<\infty$ and $d\in (N-p,N)\cap (0,N)$
be the Hausdorff dimension of $\partial\Omega$. Assume also that $\Omega$
has the $W^{1,p}$-extension property. Then $\mathcal{H}_{\mid \partial
\Omega }^{d}$ is an upper $d$-Ahlfors measure. For instance, if $\Omega
\subset \mathbb{R}^{2}$\ is the bounded domain enclosed by the Koch curve,
then $\mathcal{H}_{\mid \partial \Omega }^{d}\left( \partial \Omega \right)
<\infty $ where $d=\ln (4)/\ln (3)$ is the Hausdorff dimension of $\partial
\Omega $. Moreover $\Omega $ has the $W^{1,p}$-extension property and the
restriction of $\mathcal{H}^{d}$ to $\partial \Omega $ is an upper $d$%
-Ahlfors measure (see \cite{Bie,GW,JW}).}
\end{remark}

Next, let $S\subset {\mathbb{R}}^{N}$ be a compact set, $\mu $ an upper $d$%
-Ahlfors measure on $S$ for some $d\in (N-2,N)\cap (0,N)$ and let $0<s<1$.
We define the fractional order Sobolev space
\begin{equation*}
\mathbb{B}_{d,s}^{2}(S,\mu ):=\{u\in L^{2}(S,\mu ):\;\int_{S}\int_{S}\frac{%
|u(x)-u(y)|^{2}}{|x-y|^{d+2s}}d\mu (x)d\mu (y)<\infty \}
\end{equation*}%
and we endow it with the norm
\begin{equation*}
\Vert u\Vert _{\mathbb{B}_{d,s}^{2}(S,\mu )}:=\left( \int_{S}|u|^{2}d\mu
+\int_{S}\int_{S}\frac{|u(x)-u(y)|^{2}}{|x-y|^{d +2s}}d\mu (x)d\mu
(y)\right) ^{\frac{1}{2}}.
\end{equation*}

The following result is taken from \cite[Theorems 1.1 and 6.7]{Bie} (see
also \cite{Dan}).

\begin{theorem}
\label{Theo-Bie} Let $\Omega \subset {\mathbb{R}}^{N}$ be a bounded open set
and assume that it has the $W^{1,2}$-extension property. Let $S\subset
\overline{\Omega }$ be a compact set and $\mu $ an upper $d$-Ahlfors measure
on $S$ for some $d\in (N-2,N)\cap (0,N)$. Then the following assertions hold.

\begin{enumerate}
\item There exists a constant $C>0$ such that for every $u\in W^{1,2}(\Omega
)$,
\begin{equation}
\Vert u\Vert _{L^{2_\star}(S,\mu )}\leq C\Vert u\Vert _{W^{1,2}(\Omega
)},\;\;2_\star:=\frac{2d}{N-2}>2.  \label{sob-meas}
\end{equation}

\item For every $0<s<1-\frac{N-d}{2}$, there exists a constant $C>0$ such
that for every $u\in W^{1,2}(\Omega )$,
\begin{equation}  \label{sob-besov}
\Vert u\Vert _{\mathbb{B}_{d,s}^{2}(S,\mu )}\leq C\Vert u\Vert
_{W^{1,2}(\Omega )}.
\end{equation}
\end{enumerate}
\end{theorem}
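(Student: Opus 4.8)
The plan is to reduce both inequalities to estimates for functions defined on all of $\mathbb{R}^N$ and then invoke the trace theory of Bessel potentials against an upper $d$-Ahlfors measure. Since $\Omega$ has the $W^{1,2}$-extension property, fix the bounded extension operator $E\colon W^{1,2}(\Omega)\to W^{1,2}(\mathbb{R}^N)$ and set $U:=Eu$, so that $U|_{\Omega}=u$ and $\|U\|_{W^{1,2}(\mathbb{R}^N)}\leq C\|u\|_{W^{1,2}(\Omega)}$. Because $S\subset\overline{\Omega}$, it suffices to bound $\|U\|_{L^{2_\star}(S,\mu)}$ and the Besov-type seminorm of $U|_{S}$ by $\|U\|_{W^{1,2}(\mathbb{R}^N)}$. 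The convenient starting point for both parts is the Bessel potential representation $U=G_{1}\ast h$ with $h\in L^{2}(\mathbb{R}^N)$ and $\|h\|_{L^{2}(\mathbb{R}^N)}\leq C\|U\|_{W^{1,2}(\mathbb{R}^N)}$, which turns both tasks into mapping properties of the potential operator $h\mapsto (G_{1}\ast h)|_{S}$.

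For part (a) I would appeal to the Adams trace inequality for potentials of order one: if $\mu(B(x,r)\cap S)\leq Cr^{d}$ for all $x$ and all $r\in(0,1]$, then $h\mapsto (G_{1}\ast h)|_{S}$ maps $L^{2}(\mathbb{R}^N)$ boundedly into $L^{q}(S,\mu)$ with the sharp exponent $q=\frac{dp}{N-\alpha p}$ for $\alpha=1$, $p=2$, namely $q=\frac{2d}{N-2}=2_\star$. The only verification needed is that the hypotheses match: one requires $\alpha p=2<N$, i.e. $N>2$, which is also exactly what makes $2_\star$ finite and positive, and the Ahlfors exponent must lie in the admissible range $(N-\alpha p,N]=(N-2,N]$, which is precisely the standing assumption $d\in(N-2,N)$. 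Composing the resulting bound $\|U\|_{L^{2_\star}(S,\mu)}\leq C\|h\|_{L^{2}}\leq C\|U\|_{W^{1,2}(\mathbb{R}^N)}$ with the extension estimate yields \eqref{sob-meas}.

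For part (b) the target is the seminorm $\iint_{S\times S}|U(x)-U(y)|^{2}|x-y|^{-(d+2s)}\,d\mu(x)\,d\mu(y)$. The cleanest route is the Jonsson--Wallin trace theorem: the trace of $W^{1,2}(\mathbb{R}^N)$ onto a set carrying an upper $d$-Ahlfors measure lies in the Besov space $B^{2,2}_{\beta}(S,\mu)$ of critical smoothness $\beta:=1-\frac{N-d}{2}$, which is positive precisely because $d>N-2$, and for any $s<\beta$ one has the continuous inclusion $B^{2,2}_{\beta}(S,\mu)\hookrightarrow\mathbb{B}^2_{d,s}(S,\mu)$ since $S$ is bounded; combined with the extension bound this gives \eqref{sob-besov}. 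For a self-contained argument I would instead prove the seminorm bound directly by a dyadic decomposition of $S\times S$ into the shells $\{\,2^{-k-1}\leq|x-y|<2^{-k}\,\}$. On the $k$-th shell the kernel is comparable to $2^{k(d+2s)}$; at the critical smoothness $\beta$ one establishes shell estimates of the form $\iint_{2^{-k-1}\leq|x-y|<2^{-k}}|U(x)-U(y)|^{2}\,d\mu\,d\mu\leq C\,2^{-k(d+2\beta)}a_{k}$ with $\sum_{k}a_{k}\leq C\|U\|^{2}_{W^{1,2}(\mathbb{R}^N)}$, where the upper Ahlfors bound $\mu(B(x,r))\leq Cr^{d}$ controls the $\mu\times\mu$-measure of each shell and an $L^{2}$-averaged modulus-of-continuity estimate for $U$ distributes the $H^{1}$-energy across scales. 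Multiplying by the kernel leaves the factor $2^{2k(s-\beta)}a_{k}$, and the strict inequality $s<\beta=1-\frac{N-d}{2}$ makes this series geometrically summable.

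The main obstacle is part (b): obtaining the correct scale-by-scale oscillation estimate for the potential $U$ restricted to the lower-dimensional set $S$. Unlike part (a), where Adams' inequality is an off-the-shelf tool, here one must control pointwise differences of a function that is a priori defined only $\mu$-a.e. through its quasicontinuous representative, and track how the $W^{1,2}$-energy concentrates near the diagonal while summing against the singular kernel $|x-y|^{-(d+2s)}$. The subcriticality $s<1-\frac{N-d}{2}$ is exactly the borderline that keeps the geometric series convergent, so these estimates must be carried out sharply rather than crudely.
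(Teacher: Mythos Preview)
The paper does not prove this theorem; it is quoted from \cite[Theorems 1.1 and 6.7]{Bie} (with a further pointer to \cite{Dan}), so there is no in-paper argument to compare against. Your outline is a reasonable reconstruction of how such a result is established: reduce to $\mathbb{R}^N$ via the extension operator, represent $U$ as a Bessel potential, and then invoke Adams' trace inequality for part (a) and a trace-into-Besov statement for part (b).

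One point deserves care in part (b). The Jonsson--Wallin trace theorem, in its classical form, is stated for $d$-sets, i.e.\ sets carrying a measure that is both upper \emph{and} lower $d$-Ahlfors regular. The hypothesis here is only the upper bound $\mu(B(x,r)\cap S)\le Cr^{d}$, so you cannot cite Jonsson--Wallin off the shelf; this is precisely why the paper refers to Biegert \cite{Bie}, whose contribution is to obtain \eqref{sob-meas} and \eqref{sob-besov} under the one-sided (upper) Ahlfors condition alone. Your fallback dyadic argument is the right instinct, and indeed only the upper bound is needed there: the $\mu\times\mu$-measure of each shell $\{2^{-k-1}\le|x-y|<2^{-k}\}$ is controlled from above by covering and the upper Ahlfors bound, and the oscillation estimate for $G_1\ast h$ across scales uses only the $L^2$ energy of $h$. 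So the direct route works, but the appeal to Jonsson--Wallin as stated does not quite match the hypotheses.
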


We notice that the estimates \eqref{sob-meas} and \eqref{sob-besov} remain
valid with $W^{1,2}(\Omega)$ replaced by $\widetilde W^{1,2}(\Omega)$ if $%
\Omega$ is assumed to have the $\widetilde W^{1,2}$-extension property.

Next, we introduce the notion of Dirichlet form on an $L^{2}$-type space
(see \cite[Chapter 1]{Fuk}). To this end, let $X$ be a locally compact
metric space and $\eta$ a Radon measure on $X$. Let $L^2(X,\eta)$ be the
real Hilbert space with inner product $\left( \cdot ,\cdot \right) $ and let
$\mathcal{E}$ with domain $D(\mathcal{E})$ be a bilinear form on $%
L^2(X,\eta) $.

\begin{definition}
\label{Diri-form}The form $\mathcal{E}$ is said to be a Dirichlet form if
the following conditions hold:

\begin{enumerate}
\item $\mathcal{E}:D\left( \mathcal{E}\right) \times D\left( \mathcal{E}%
\right) \rightarrow \mathbb{R}$, where the domain $D\left( \mathcal{E}%
\right) $ of the form is a dense linear subspace of $L^2(X,\eta).$

\item $\mathcal{E}\left( u,v\right) =\mathcal{E}\left( v,u\right) $, $%
\mathcal{E}\left( u+v,w\right) =\mathcal{E}\left( u,w\right) +\mathcal{E}%
\left( v,w\right) $, $\mathcal{E}\left( au,v\right) =a\mathcal{E}\left(
u,v\right) $ and $\mathcal{E}\left( u,u\right) \geq 0$, for all $u,v,w\in
D\left( \mathcal{E}\right) $ and $a\in \mathbb{R}$.

\item Let $\lambda >0$ and define $\mathcal{E}_{\lambda }\left( u,v\right) =%
\mathcal{E}\left( u,v\right) \mathcal{+\lambda }\left( u,v\right) ,$ for $%
u,v\in D\left( \mathcal{E}_{\lambda }\right) =D\left( \mathcal{E}\right) $.
The form $\mathcal{E}$ is closed, that is, if $u_{n}\in D\left( \mathcal{E}%
\right) $ with $\mathcal{E}_{\lambda }\left( u_{n}-u_{m},u_{n}-u_{m}\right)
\rightarrow 0$ as $n,m\rightarrow \infty $, then there exists $u\in D\left(
\mathcal{E}\right) $ such that $\mathcal{E}_{\lambda }\left(
u_{n}-u,u_{n}-u\right) \rightarrow 0$ as $n\rightarrow \infty .$

\item For each $\epsilon >0$ there exists a function $\phi _{\epsilon }:%
\mathbb{R}\rightarrow \mathbb{R}$, such that $\phi_{\epsilon}\in C^\infty({%
\mathbb{R}})$, $\phi _{\epsilon }\left( t\right) =t,$ for $t\in \left[ 0,1%
\right] ,$ $-\epsilon \leq \phi _{\epsilon }\left( t\right) \leq 1+\epsilon $%
, for all $t\in \mathbb{R}$, $0\leq \phi _{\epsilon }\left( t\right) -\phi
_{\epsilon }\left( \tau\right) \leq t-\tau$, whenever $\tau<t$, such that $%
u\in D\left( \mathcal{E}\right) $ implies $\phi _{\epsilon }\left( u\right)
\in D\left( \mathcal{E}\right) $ and $\mathcal{E}\left( \phi _{\epsilon
}\left( u\right) ,\phi _{\epsilon }\left( u\right) \right) \leq \mathcal{E}%
\left( u,u\right) .$
\end{enumerate}
\end{definition}

\begin{remark}
\emph{Clearly, $D\left( \mathcal{E}\right) $ is a real Hilbert space with
inner product $\mathcal{E}_{\lambda }\left( u,u\right) $ for each $\lambda
>0 $. We recall that a form $\mathcal{E}$ which satisfies (a)-(c) is closed
and symmetric. If $\mathcal{E}$ also satisfies (d), then it is said to be a
Markovian form.}
\end{remark}

Let $T=(T(t))_{t\ge 0}$ be a semigroup on $L^2(X,\eta)$. We say that $T(t)$
is positively-preserving, if $T(t)u\ge 0$ $\eta$-a.e. and for all $t\ge 0$,
whenever $u\in L^2(X,\eta)$ and $u\ge 0$ $\eta$-a.e. If $T(t)$ is
positively-preserving and $T(t)$ is a contraction on $L^\infty(X,\eta)$,
that is,
\begin{align*}
\|T(t)u\|_{L^\infty(X,\eta)}\le \|u\|_{L^\infty(X,\eta)},\;\forall\;t\geq
0,\;\forall\;u\in L^2(X,\eta)\cap L^\infty(X,\eta),
\end{align*}
then we will say that $T=(T(t))_{t\ge 0}$ is a Markovian (or submarkovian)
semigroup. It turns out that if $A$ is the closed linear self-adjoint
operator in $L^2(X,\mu)$ associated with the Dirichlet form $\mathcal{E}$,
then $-A$ generates a strongly continuous Markovian semigroup on $%
L^2(X,\eta) $. Conversely, the generator of every symmetric, strongly
continuous Markovain semigroup on $L^2(X,\eta)$ is given by a Dirichlet form
on $L^2(X,\eta)$. For more details on this topic we refer to Chapter 1 of
the monographs \cite{Dav,Fuk}.

\subsection{The non-local dynamic boundary conditions on the interface}

\label{sub-22}

Throughout the remainder of the article the sets $\Omega$, $\partial\Omega$,
$\Gamma_N$, $\Gamma_D$ and $\Sigma$ are as defined in the introduction.
Recall that $\Sigma \subset \Omega$ is a relatively closed set with
Hausdorff dimension $d\in (N-2,N)\cap (0,N)$ and that $\mathcal{H}%
^{d}(\Sigma )<\infty $ where $\mathcal{H}^{d}$ denotes the $d$-dimensional
Hausdorff measure. We still denote by $\mu _{\Sigma }$ the restriction of $%
\mathcal{H}^{d}$ to the set $\Sigma $. In this case, we have that $\mu
_{\Sigma }$ is an upper $d$-Ahlfors measure on $\Sigma$ (see e.g. \cite%
{BW,JW}). Recall also that
\begin{equation*}
\widetilde{W}^{1,2}(\Omega \backslash \Sigma ):=\overline{W^{1,2}(\Omega
\backslash \Sigma )\cap C(\overline{\Omega })}^{W^{1,2}(\Omega \backslash
\Sigma )}.
\end{equation*}

\begin{remark}
\emph{In general, even if $\mathcal{H}^{N}(\Sigma )=0$, the space $%
\widetilde{W}^{1,2}(\Omega \backslash \Sigma )$ is \textbf{not} equal to $%
\widetilde{W}^{1,2}(\Omega )=W^{1,2}(\Omega )$. But if $d=N-1$, then it
follows from \cite[Proposition 3.6]{BW} that
\begin{equation*}
W^{1,2}(\Omega )=\widetilde{W}^{1,2}(\Omega )=\widetilde{W}^{1,2}(\Omega
\backslash \Sigma ).
\end{equation*}
}
\end{remark}

Let
\begin{equation*}
{W}_{0,\Gamma _{D}}^{1,2}(\Omega \backslash \Sigma )=\overline{\{u\in {W}%
^{1,2}(\Omega \backslash \Sigma )\cap C(\overline{\Omega }):\;u=0\;%
\mbox{ on
}\Gamma _{D}\}}^{{W}^{1,2}(\Omega \backslash \Sigma )}.
\end{equation*}%
By definition, we have that ${W}_{0,\Gamma _{D}}^{1,2}(\Omega \backslash
\Sigma )$ is a closed subspace of $\widetilde{W}^{1,2}(\Omega \backslash
\Sigma )$.

For $r,q\in \lbrack 1,\infty ]$ with $1\leq r,q<\infty $ or $r=q=\infty $ we
endow the Banach space
\begin{equation*}
\mathbb{X}^{r,q}(\Omega ,\Sigma ):=L^{r}(\Omega )\times L^{q}(\Sigma ,\mu
_{\Sigma })=\{(f,g),\;f\in L^{r}(\Omega ),\;g\in L^{q}(\Sigma ,\mu _{\Sigma
})\}
\end{equation*}%
with the norm
\begin{equation*}
\Vert (f,g)\Vert _{\mathbb{X}^{r,q}(\Omega ,\Sigma )}:=\Vert f\Vert
_{r,\Omega }+\Vert g\Vert _{q,\Sigma }\;\mbox{ if }\;r\neq q,\;\Vert
(f,g)\Vert _{\mathbb{X}^{r,r}(\Omega ,\Sigma )}:=(\Vert f\Vert _{r,\Omega
}^{r}+\Vert g\Vert _{r,\Sigma }^{r})^{\frac{1}{r}}
\end{equation*}%
if $1\leq r,q<\infty $ and
\begin{equation*}
\Vert (f,g)\Vert _{\mathbb{X}^{\infty ,\infty }(\Omega ,\Sigma )}:=\max
\{\Vert f\Vert _{\infty ,\Omega },\Vert g\Vert _{\infty ,\Sigma }\}.
\end{equation*}%
We will simple write $\mathbb{X}^{r}(\Omega ,\Sigma ):=\mathbb{X}%
^{r,r}(\Omega ,\Sigma )$. If $\Omega \backslash \Sigma $ has the $\widetilde{%
W}^{1,2}$-extension property, then identifying each function $u\in
\widetilde{W}^{1,2}(\Omega \backslash \Sigma )$ with $(u,u|_{\Sigma })$
(recall that in this case, by Theorem \ref{Theo-Bie}, every $u\in \widetilde{%
W}^{1,2}(\Omega \backslash \Sigma )$ has a well-defined trace $u|_{\Sigma}$
which belongs to some $L^q(\Sigma,\mu_{\Sigma})$), we get from \eqref{sobo}
and \eqref{sob-meas} that if $N>2$, then
\begin{equation}
\widetilde{W}^{1,2}(\Omega \backslash \Sigma )\hookrightarrow \mathbb{X}%
^{r,q}(\Omega,\Sigma),\;\;\forall \;r\in \lbrack 1,2^{\star }],\;2^\star:=%
\frac{2N}{N-2},\;\forall \;q\in \lbrack 1,2_{\star }],\;\;2_{\star }:=\frac{%
2d}{N-2}.  \label{cont-em}
\end{equation}

\begin{remark}
\label{rem-29} \emph{Recall that by definition, $W_{0,\Gamma
_{D}}^{1,2}(\Omega \backslash \Sigma )$ is a closed subspace of $\widetilde{W%
}^{1,2}(\Omega \backslash \Sigma )$. Let }$Cap$\emph{$_{\Omega \setminus
\Sigma }$ be the relative capacity defined with on subsets of $\overline{%
\Omega \setminus \Sigma }=\overline{\Omega }$, with the regular (in these
sense of \cite[p.6]{Fuk}) Dirichlet space $\widetilde{W}^{1,2}(\Omega
\backslash \Sigma )$. More precisely, for a subset $A$ of $\bOm$,} \emph{let}
\begin{equation*}
Cap_{_{\Omega \setminus \Sigma }}\left( A\right) :=\inf \left\{ \Vert u\Vert
_{\widetilde{W}^{1,2}(\Omega \backslash \Sigma )}^{2}\;:\;%
\begin{array}{l}
u\in \widetilde{W}^{1,2}(\Omega \backslash \Sigma ),\;\exists \;O\subset {%
\mathbb{R}}^{N}\text{ open,} \\
A\subset O\text{ and }u\geq 1\text{ a.e. on }\overline{\Omega \setminus
\Sigma }\cap O%
\end{array}%
\right\} .
\end{equation*}%
\emph{With respect to the capacity }$Cap$\emph{$_{\Omega \setminus \Sigma }$%
, every function $u\in W_{0,\Gamma _{D}}^{1,2}(\Omega \backslash \Sigma )$
has a unique (relatively quasi-everywhere) relatively quasi-continuous
version $\tilde{u}$ on $\overline{\Omega }$. Throughout the rest of the
paper, if $u\in W_{0,\Gamma _{D}}^{1,2}(\Omega \backslash \Sigma )$, by $%
u|_{\Sigma }$, we mean $\tilde{u}|_{\Sigma }$. If $\Omega \backslash \Sigma $
has the $\widetilde{W}^{1,2}$-extension property then $u|_{\Sigma }=\tilde{u}%
|_{\Sigma }$ coincides with the trace of $u$ on $\Sigma ,$ which exists by
Theorem \ref{Theo-Bie} and belongs to $L^{2_{\star }}(\Sigma ,\mu _{\Sigma
}) $.}
\end{remark}

Next, let $0<s<1$ and define the bilinear symmetric form ${\mathcal{A}}%
_{\Theta ,{\Sigma }}$ on $\mathbb{X}^{2}(\Omega ,\Sigma )$ with domain
\begin{equation}
D({\mathcal{A}}_{\Theta ,{\Sigma }})=\{U:=(u,u|_{\Sigma }),\;u\in
W_{0,\Gamma _{D}}^{1,2}(\Omega \backslash \Sigma ),\;u|_{\Sigma }\in \mathbb{%
B}_{d,s}^{2}(\Sigma ,\mu _{\Sigma })\}  \label{form-def-dy}
\end{equation}%
and given for $U:=(u,u|_{\Sigma }),\Phi :=(\varphi ,\varphi |_{\Sigma })\in
D({\mathcal{A}}_{\Theta ,{\Sigma }})$ by%
\begin{align}
{\mathcal{A}}_{\Theta ,{\Sigma }}(U,\Phi )=& \int_{\Omega \backslash \Sigma }%
\mathbf{D}\nabla u\cdot \nabla \varphi dx+\int_{\Sigma }\beta \left(
x\right) u\varphi d\mu _{\Sigma }  \label{form-dy} \\
\ & +\int_{\Sigma }\int_{\Sigma }K(x,y)(u(x)-u(y))(\varphi (x)-\varphi
(y))d\mu _{\Sigma }\left( x\right) d\mu _{\Sigma }\left( y\right) ,  \notag
\end{align}%
where we recall $\mathbf{D}=(d_{ij}(x))$ is symmetric, bounded, measurable
and non-degenerate such that
\begin{equation}
\left\langle \mathbf{Dv,v}\right\rangle _{\mathbb{R}^{N}}\geq
d_{0}\left\vert \mathbf{v}\right\vert _{\mathbb{R}^{N}}^{2},\;%
\mbox{ for any
}\;\mathbf{v}\in {\mathbb{R}}^{N}\;\mbox{ with some
constant }\;d_{0}>0,  \label{D}
\end{equation}%
and the symmetric kernel $K$ is such that there exist two constants $%
0<c_{0}\leq c_{1}$ satisfying
\begin{equation*}
c_{0}\leq K(x,y)|x-y|^{d+2s}\leq c_{1},\;\;\forall \;x,y\in \Sigma ,\;x\neq
y.
\end{equation*}%
The function $\beta \in L^{\infty }(\Sigma ,\mu _{\Sigma })$ and there
exists a constant $\beta _{0}$ such that
\begin{equation}
\beta (x)\geq \beta _{0}>0\;\mbox{ for }\;\mu _{\Sigma }-\mbox{a.e. }\;x\in
\Sigma .  \label{beta}
\end{equation}%
We notice that $D({\mathcal{A}}_{\Theta ,{\Sigma }})$ is not empty, since it
contains the set $\{(u,u|_{\Sigma }):\;u\in C^{1}(\overline{\Omega }):\;u=0\;%
\mbox{ on }\;\Gamma _{D}\}$ and in particular it contains $\{(u,u|_{\Sigma
})=(u,0):\;u\in \mathcal{D}(\Omega \setminus \Sigma )\}$.

\begin{remark}
\emph{We mention that if $0<s<1-\frac{N-d}{2}$ and $\Omega \backslash \Sigma
$ has the $\widetilde{W}^{1,2}$-extension property, then it follows from
Theorem \ref{Theo-Bie}-(b) that%
\begin{equation*}
D(\mathcal{A}_{\Theta ,{\Sigma }})=\{U=(u,u|_{\Sigma }):\;u\in W_{0,\Gamma
_{D}}^{1,2}(\Omega \backslash \Sigma )\}.
\end{equation*}
}
\end{remark}

Throughout this section, in order to apply the abstract result on Dirichlet
forms given in Definition \ref{Diri-form}, we notice that $\mathbb{X}%
^{2}(\Omega ,\Sigma )$ can be identified with $L^{2}(\Omega ,\eta )$ where
the measure $\eta :=\mathcal{H}^{N}\oplus \mu _{\Sigma }$ is given for every
measurable set $B\subset \Omega $ by $\eta (B)=\mathcal{H}^{N}(B\cap (\Omega
\setminus \Sigma ))+\mu _{\Sigma }(B\cap \Sigma )$, so that for every $u\in
L^{2}(\Omega ,\eta )$, we have
\begin{equation*}
\int_{\Omega }ud\eta =\int_{\Omega \setminus \Sigma }udx+\int_{\Sigma }ud\mu
_{\Sigma }.
\end{equation*}

We have the following result.

\begin{proposition}
\label{prop-dform} Assume that $\Sigma $ is such that $\mu _{\Sigma }$ is
absolutely continuous with respect to $Cap_{\Omega \setminus \Sigma }$, that
is,
\begin{equation}
Cap_{\Omega \setminus \Sigma }(B)=0\;\Longrightarrow \;\mu _{\Sigma }(B)=0\;%
\mbox{ for all Borel set }\;B\subset \Sigma .  \label{cap}
\end{equation}%
The bilinear symmetric form ${\mathcal{A}}_{\Theta ,{\Sigma }}$ with domain $%
D({\mathcal{A}}_{\Theta ,{\Sigma }})$ is a Dirichlet form in the space $%
\mathbb{X}^{2}(\Omega ,\Sigma )$, that is, it is closed and Markovian.
\end{proposition}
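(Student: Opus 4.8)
The plan is to verify in turn the four defining properties (a)--(d) of Definition~\ref{Diri-form} for $\mathcal{A}_{\Theta,\Sigma}$ on $\mathbb{X}^2(\Omega,\Sigma)$, which we identify with $L^2(\Omega,\eta)$ for $\eta=\mathcal{H}^N\oplus\mu_\Sigma$. Throughout, the standing hypothesis \eqref{cap} is what makes the trace $u|_\Sigma$ of $u\in W_{0,\Gamma_D}^{1,2}(\Omega\setminus\Sigma)$ well defined $\mu_\Sigma$-a.e.\ through the quasi-continuous representative (Remark~\ref{rem-29}), so that $\mathcal{A}_{\Theta,\Sigma}$ and its domain \eqref{form-def-dy} make sense. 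Property (b) is immediate: bilinearity is clear, symmetry follows from the symmetry of $\mathbf{D}$ and of the kernel $K$, and nonnegativity $\mathcal{A}_{\Theta,\Sigma}(U,U)\ge 0$ follows termwise from the ellipticity \eqref{D}, from $\beta\ge\beta_0>0$ in \eqref{beta}, and from $K\ge 0$, since each of the three integrals in \eqref{form-dy} is then nonnegative.

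For (a) I would first record that $D(\mathcal{A}_{\Theta,\Sigma})$ contains both $\{(u,0):u\in\mathcal{D}(\Omega\setminus\Sigma)\}$ and $\{(u,u|_\Sigma):u\in C^1(\overline{\Omega}),\,u=0\text{ on }\Gamma_D\}$. Since $\Sigma$ lies strictly inside $\Omega$ (hence at positive distance from $\Gamma_D$) and is compact, the restrictions to $\Sigma$ of the latter functions form a subalgebra of $C(\Sigma)$ separating points, so by Stone--Weierstrass they are dense in $C(\Sigma)$ and therefore in $L^2(\Sigma,\mu_\Sigma)$. Given a target $(f,g)\in L^2(\Omega\setminus\Sigma)\times L^2(\Sigma,\mu_\Sigma)$, I would first choose $v$ in the domain with $v|_\Sigma$ close to $g$ in $L^2(\Sigma,\mu_\Sigma)$, and then correct the bulk component by adding some $w\in\mathcal{D}(\Omega\setminus\Sigma)$ (which has zero trace, hence does not disturb the interface component) so that $v+w$ is close to $f$ in $L^2(\Omega\setminus\Sigma)$. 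This yields density of $D(\mathcal{A}_{\Theta,\Sigma})$ in $\mathbb{X}^2(\Omega,\Sigma)$.

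The closedness (c) is the heart of the matter. Fixing $\lambda>0$, I would first show that the form norm $(\mathcal{A}_{\Theta,\Sigma}(U,U)+\lambda\|U\|_{\mathbb{X}^2(\Omega,\Sigma)}^2)^{1/2}$ is equivalent on $D(\mathcal{A}_{\Theta,\Sigma})$ to $\|u\|_{W^{1,2}(\Omega\setminus\Sigma)}+\|u|_\Sigma\|_{\mathbb{B}_{d,s}^{2}(\Sigma,\mu_\Sigma)}$: the lower bound uses \eqref{D}, $\beta\ge\beta_0>0$ and $c_0\le K(x,y)|x-y|^{d+2s}$, while the upper bound uses the boundedness of $\mathbf{D}$, $\beta\in L^\infty$ and $K(x,y)|x-y|^{d+2s}\le c_1$. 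Consequently a sequence $U_n=(u_n,u_n|_\Sigma)$ that is Cauchy for the form norm splits: $(u_n)$ is Cauchy in the (complete) closed subspace $W_{0,\Gamma_D}^{1,2}(\Omega\setminus\Sigma)$, so $u_n\to u$ there, and $(u_n|_\Sigma)$ is Cauchy in $\mathbb{B}_{d,s}^{2}(\Sigma,\mu_\Sigma)$, so $u_n|_\Sigma\to w$ there. The only genuine difficulty is the compatibility $w=u|_\Sigma$, i.e.\ that the interface limit is the trace of the bulk limit; here I would invoke that $\widetilde{W}^{1,2}(\Omega\setminus\Sigma)$ is a regular Dirichlet space, so convergence of $u_n$ to $u$ in $W^{1,2}$ forces, along a subsequence, $Cap_{\Omega\setminus\Sigma}$-quasi-everywhere convergence of the quasi-continuous versions, and by \eqref{cap} this convergence holds $\mu_\Sigma$-a.e.\ on $\Sigma$. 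Since $u_n|_\Sigma\to w$ in $\mathbb{B}_{d,s}^{2}$ also converges $\mu_\Sigma$-a.e.\ along a further subsequence, uniqueness of $\mu_\Sigma$-a.e.\ limits gives $w=u|_\Sigma$. Thus $U=(u,u|_\Sigma)\in D(\mathcal{A}_{\Theta,\Sigma})$ and $U_n\to U$ in the form norm, proving closedness. I expect this compatibility step, and the careful use of \eqref{cap}, to be the main obstacle.

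Finally, for the Markovian property (d) I would show that the given $\phi_\epsilon$ operates as a contraction on each term of \eqref{form-dy}. From $\phi_\epsilon(t)=t$ on $[0,1]$ one has $\phi_\epsilon(0)=0$, and the monotone contraction bound $0\le\phi_\epsilon(t)-\phi_\epsilon(\tau)\le t-\tau$ for $\tau<t$ yields simultaneously $0\le\phi_\epsilon'\le 1$, the global Lipschitz estimate $|\phi_\epsilon(t)-\phi_\epsilon(\tau)|\le|t-\tau|$, and (comparing against $\tau=0$ or $t=0$) the pointwise bound $|\phi_\epsilon(t)|\le|t|$ for all $t$, so that the loose overshoot $-\epsilon\le\phi_\epsilon\le 1+\epsilon$ never actually forces $|\phi_\epsilon(t)|>|t|$. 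First, $\phi_\epsilon(u)\in W_{0,\Gamma_D}^{1,2}(\Omega\setminus\Sigma)$ because $\phi_\epsilon$ is Lipschitz with $\phi_\epsilon(0)=0$ (so the Dirichlet condition on $\Gamma_D$ is preserved), and $\phi_\epsilon(u)|_\Sigma=\phi_\epsilon(u|_\Sigma)\in\mathbb{B}_{d,s}^{2}(\Sigma,\mu_\Sigma)$ by the contraction bound; hence $\phi_\epsilon(U)\in D(\mathcal{A}_{\Theta,\Sigma})$. Then, using $\nabla\phi_\epsilon(u)=\phi_\epsilon'(u)\nabla u$ with $0\le\phi_\epsilon'\le1$ for the bulk term, $|\phi_\epsilon(u)|\le|u|$ for the $\beta$-term, and the $1$-Lipschitz bound $|\phi_\epsilon(u(x))-\phi_\epsilon(u(y))|\le|u(x)-u(y)|$ for the nonlocal term, each of the three integrals in \eqref{form-dy} does not increase, giving $\mathcal{A}_{\Theta,\Sigma}(\phi_\epsilon(U),\phi_\epsilon(U))\le\mathcal{A}_{\Theta,\Sigma}(U,U)$. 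This establishes (d), and the form is a Dirichlet form.
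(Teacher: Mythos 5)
Your proposal is correct and follows essentially the same route as the paper's proof: closedness is obtained by splitting the form norm into the $W^{1,2}(\Omega\setminus\Sigma)$ and $\mathbb{B}_{d,s}^{2}(\Sigma,\mu_{\Sigma})$ pieces and then reconciling the two limits via quasi-everywhere convergence of quasi-continuous representatives together with \eqref{cap}, while the Markovian property is proved exactly as in the paper from the pointwise bounds $0\le\phi_{\epsilon}'\le 1$, the $1$-Lipschitz estimate and $|\phi_{\epsilon}(t)|\le|t|$ applied termwise to \eqref{form-dy}. The only difference is that you additionally verify properties (a) and (b) of Definition \ref{Diri-form} (density via a Stone--Weierstrass argument plus correction by $\mathcal{D}(\Omega\setminus\Sigma)$ functions), which the paper treats as implicit after noting that the domain contains the traces of $C^{1}(\overline{\Omega})$ functions vanishing on $\Gamma_{D}$; this is a harmless and correct addition rather than a different approach.
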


\begin{proof}
Let ${\mathcal{A}}_{\Theta ,{\Sigma }}$ with domain $D({\mathcal{A}}_{\Theta
,{\Sigma }})$ be the bilinear symmetric form in $\mathbb{X}^{2}(\Omega
,\Sigma )$ defined in \eqref{form-dy}. First we show that the form ${%
\mathcal{A}}_{\Theta ,{\Sigma }}$ is closed in $\mathbb{X}^{2}(\Omega
,\Sigma )$. Indeed, let $U_{n}=(u_{n},u_{n}|_{\Sigma })\in D(\mathcal{A}%
_{\Theta ,{\Sigma }})$ be a sequence such that
\begin{equation}
\lim_{n,m\rightarrow \infty }\left(\mathcal{A}_{\Theta ,{\Sigma }%
}(U_{n}-U_{m},U_{n}-U_{m})+\Vert u_{n}-u_{m}\Vert _{L^{2}(\Omega
)}^{2}+\Vert u_{n}-u_{m}\Vert _{L^{2}(\Sigma ,\mu _{\Sigma })}^{2}\right)=0.
\label{conv-1}
\end{equation}%
It follows from \eqref{conv-1} that $\lim_{n,m\rightarrow \infty }\Vert
u_{n}-u_{m}\Vert _{W^{1,2}(\Omega \backslash \Sigma )}=0$. This implies that
$u_{n}$ converges strongly to some function $u\in \widetilde{W}^{1,2}(\Omega
\backslash \Sigma )$. Since $u_{n}\in W_{0,\Gamma _{D}}^{1,2}(\Omega
\backslash \Sigma )$ and $W_{0,\Gamma _{D}}^{1,2}(\Omega \backslash \Sigma )$
is a closed subspace of $\widetilde{W}^{1,2}(\Omega \backslash \Sigma )$ we
have that $u\in W_{0,\Gamma _{D}}^{1,2}(\Omega \backslash \Sigma )$.
Moreover taking a subsequence if necessary, we have that $u_n|_{\Sigma}$
converges relatively quasi-everywhere to $u|_{\Sigma}$ and hence by %
\eqref{cap}, $u_n|_{\Sigma}$ converges to $u|_{\Sigma}$, $\mu_{\Sigma}$-a.e.
on $\Sigma$. It also follows from \eqref{conv-1} that $u_{n}|_{\Sigma }$ is
a Cauchy sequence in the Banach space $\mathbb{B}_{d,s}^{2}(\Sigma ,\mu
_{\Sigma })$; hence, it converges in $\mathbb{B}_{d,s}^{2}(\Sigma ,\mu
_{\Sigma })$ to some function $v$ and also $\mu_{\Sigma}$-a.e. on $\Sigma$
(after taking a subsequence if necessary). By uniqueness of the limit, we
have that $v=u|_{\Sigma }\in \mathbb{B}_{d,s}^{2}(\Sigma ,\mu _{\Sigma })$.
Let $U=(u,u|_{\Sigma })$. We have shown that
\begin{equation*}
\lim_{n\rightarrow \infty }\mathcal{A}_{\Theta ,{\Sigma }}(U_{n}-U,U_{n}-U)+%
\Vert U_{n}-U\Vert _{\mathbb{X}^{2}(\Omega ,\Sigma )}^{2}=0
\end{equation*}%
and this implies that the form $\mathcal{A}_{\Theta ,{\Sigma }}$ is closed
in $\mathbb{X}^{2}(\Omega ,\Sigma )$.

Next, we show that the form $\mathcal{A}_{\Theta ,{\Sigma }}$ is Markovian.
Indeed, let $\varepsilon >0$ and $\phi _{\varepsilon }\in C^{\infty }({%
\mathbb{R}})$ be such that
\begin{equation}
\begin{cases}
\phi _{\varepsilon }(t)=t,\;\;\forall \;t\in \lbrack 0,1],\;-\varepsilon
\leq \phi _{\varepsilon }(t)\leq 1+\varepsilon ,\;\;\forall \;t\in {\mathbb{R%
}}, \\
0\leq \phi _{\varepsilon }(t_{1})-\phi _{\varepsilon }(t_{2})\leq
t_{1}-t_{2}\;\mbox{ whenver }\;t_{2}<t_{1}.%
\end{cases}
\label{func-phi1}
\end{equation}%
An example of such a function $\phi _{\varepsilon }$ is contained in \cite[%
Exercise 1.2.1, pg. 8]{Fuk}. We notice that it follows from \eqref{func-phi1}
that
\begin{equation}
0\leq \phi _{\varepsilon }^{\prime }(t)\leq 1,\;\;|\phi _{\varepsilon
}(t_{1})-\phi _{\varepsilon }(t_{2})|\leq |t_{1}-t_{2}|\;\mbox{ and }\;|\phi
_{\varepsilon }(t)|\leq |t|.  \label{func-phi2}
\end{equation}%
Let $U:=(u,u|_{\Sigma })\in D(\mathcal{A}_{\Theta ,{\Sigma }})$. It follows
from the first and third inequalities in \eqref{func-phi2} that $\phi
_{\varepsilon }(u)\in W_{0,\Gamma _{D}}^{1,2}(\Omega \backslash \Sigma ) $
and
\begin{equation}
\int_{\Omega \backslash \Sigma }|\mathbf{D}\nabla \phi _{\varepsilon
}(u)|^{2}dx=\int_{\Omega \backslash \Sigma }|\phi _{\varepsilon }^{\prime
}(u(x))|^{2}|\mathbf{D}\nabla u|^{2}dx\leq \int_{\Omega \backslash \Sigma }|%
\mathbf{D}\nabla u|^{2}dx.  \label{A1}
\end{equation}%
The second and the third inequalities in \eqref{func-phi2} imply that $\phi
_{\varepsilon }(u|_{\Sigma })=\phi _{\varepsilon }(u)|_{\Sigma }\in \mathbb{B%
}_{d,s}^{2}(\Sigma ,\mu _{\Sigma })$ and%
\begin{align}
& \int_{\Sigma }\int_{\Sigma }K(x,y)|\phi _{\varepsilon }(u(x))-\phi
_{\varepsilon }(u(y))|^{2}d\mu _{\Sigma }(x)d\mu _{\Sigma }(y)+\int_{\Sigma
}\beta (x)|\phi _{\varepsilon }\left( u\right) |^{2}d\mu _{\Sigma }
\label{A2} \\
& \leq \int_{\Sigma }\int_{\Sigma }K(x,y)|u(x)-u(y)|^{2}d\mu _{\Sigma
}(x)d\mu _{\Sigma }(y)+\int_{\Sigma }\beta (x)|u|^{2}d\mu _{\Sigma }.  \notag
\end{align}%
We have shown that $\Phi _{\varepsilon }(U):=(\phi _{\varepsilon }(u),\phi
_{\varepsilon }(u)|_{\Sigma })\in D(\mathcal{A}_{\Theta ,{\Sigma }})$.
Moreover the estimates \eqref{A1} and \eqref{A2} imply that
\begin{equation*}
\mathcal{A}_{\Theta ,{\Sigma }}(\Phi _{\varepsilon }(U),\Phi _{\varepsilon
}(U))\leq \mathcal{A}_{\Theta ,{\Sigma }}(U,U).
\end{equation*}%
Hence, by Definition \ref{Diri-form}-(d),\ the form $\mathcal{A}_{\Theta ,{%
\Sigma }}$ is Markovian on $\mathbb{X}^{2}(\Omega ,\Sigma )$. We have shown
that $\mathcal{A}_{\Theta ,{\Sigma }}$ with domain $D(\mathcal{A}_{\Theta ,{%
\Sigma }})$ is a Dirichlet form on $\mathbb{X}^{2}(\Omega ,\Sigma )$. The
proof is finished.
\end{proof}

Let $A_{\Theta ,{\Sigma }}$ be the closed linear self-adjoint operator on $%
\mathbb{X}^{2}(\Omega,\Sigma)$ associated with the form ${\mathcal{A}}%
_{\Theta ,{\Sigma }}$, in the sense that,
\begin{equation}  \label{op-form-dy}
\begin{cases}
D(A_{\Theta ,{\Sigma }}):=\{U\in D({\mathcal{A}}_{\Theta ,{\Sigma }%
}),\;\exists\; W\in \mathbb{X}^{2}(\Omega,\Sigma),\; {\mathcal{A}}_{\Theta ,{%
\Sigma }}(U,\Phi)=(W,\Phi)_{\mathbb{X}^{2}(\Omega,\Sigma)}\;\forall\;\Phi\in
D({\mathcal{A}}_{\Theta ,{\Sigma }})\}, \\
A_{\Theta ,{\Sigma }} U=W.%
\end{cases}%
\end{equation}

We also have the following characterization of the operator $A_{\Theta ,{%
\Sigma }}$.

\begin{proposition}
\label{op-dyn}Assume (\ref{cap}) and let ${A}_{\Theta ,{\Sigma }}$ be the
operator defined in \eqref{op-form-dy}. Then
\begin{equation}
\begin{cases}
D(A_{\Theta ,{\Sigma }})=\{U=(u,u|_{\Sigma }):\;u\in W_{0,\Gamma
_{D}}^{1,2}(\Omega \backslash \Sigma ),\;u|_{\Sigma }\in \mathbb{B}%
_{d,s}^{2}(\Sigma ,\mu _{\Sigma }),\;\;\mbox{div}(\mathbf{D}\nabla u)\in
L^{2}(\Omega ), \\
\hfill \partial _{\nu }^{\mathbf{D}}u=0\;\mbox{ on }\;\Gamma _{N}\;%
\mbox{
and }\;B_{\Theta }(u|_{\Sigma })\in L^{2}(\Sigma ,\mu _{\Sigma })\} \\
A_{\Theta ,{\Sigma }}U=\bigg(-\text{div}(\mathbf{D}\nabla u),{B}_{\Theta
}(u|_{\Sigma })\bigg),%
\end{cases}%
\end{equation}%
where
\begin{equation*}
{B}_{\Theta }(u|_{\Sigma }):=\frac{dN_{\mathbf{D}}(u)}{d\mu _{\Sigma }}%
+\beta (x)u+\Theta _{\Sigma }(u),
\end{equation*}%
and $dN_{\mathbf{D}}(u)$ is to be understood in the sense of \eqref{NE}.
\end{proposition}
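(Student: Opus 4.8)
**

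The plan is to characterize the operator $A_{\Theta,\Sigma}$ by unwinding the defining relation $\mathcal{A}_{\Theta,\Sigma}(U,\Phi)=(W,\Phi)_{\mathbb{X}^2(\Omega,\Sigma)}$ and reading off which boundary/interface conditions are forced on $u$. The strategy is the classical one for operators associated with Dirichlet forms: first test against a large class of $\Phi$ supported in the bulk to recover the PDE $-\operatorname{div}(\mathbf{D}\nabla u)$ and its regularity, then test against $\Phi$ that detect the boundary $\Gamma_N$ and the interface $\Sigma$ separately to extract the Neumann condition and the interfacial identity. The main technical tool is the generalized Green identity \eqref{NE} together with the definition of the $\mathbf{D}$-normal derivative $N_{\mathbf{D}}(u)$, which is precisely what lets me convert the bilinear form into boundary integrals in the non-smooth setting.

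First I would show the inclusion of the right-hand side into the domain, i.e. that any $U=(u,u|_\Sigma)$ satisfying the stated regularity and boundary conditions lies in $D(A_{\Theta,\Sigma})$ and that $A_{\Theta,\Sigma}U$ has the claimed form; then the reverse inclusion. For the reverse (harder) direction, suppose $U\in D(A_{\Theta,\Sigma})$ with $A_{\Theta,\Sigma}U=W=(w_1,w_2)$. Taking $\Phi=(\varphi,0)$ with $\varphi\in\mathcal{D}(\Omega\setminus\Sigma)$ kills the $\beta$-term and the nonlocal term, leaving $\int_{\Omega\setminus\Sigma}\mathbf{D}\nabla u\cdot\nabla\varphi\,dx=\int_{\Omega\setminus\Sigma}w_1\varphi\,dx$. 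This identifies $-\operatorname{div}(\mathbf{D}\nabla u)=w_1$ distributionally and, since $w_1\in L^2(\Omega)$, gives $\operatorname{div}(\mathbf{D}\nabla u)\in L^2(\Omega)$. With this regularity in hand, I would then test against general $\Phi=(\varphi,\varphi|_\Sigma)$ with $\varphi\in C^1(\overline\Omega)$, $\varphi=0$ on $\Gamma_D$, and apply \eqref{NE} to rewrite
\begin{equation*}
\int_{\Omega\setminus\Sigma}\mathbf{D}\nabla u\cdot\nabla\varphi\,dx
=\int_{\Omega\setminus\Sigma}(-\operatorname{div}(\mathbf{D}\nabla u))\varphi\,dx
+\int_{\partial\Omega}(\mathbf{D}\nabla u\cdot\nu)\varphi\,d\sigma
+\int_\Sigma \varphi\,dN_{\mathbf{D}}(u).
\end{equation*}
Substituting this into $\mathcal{A}_{\Theta,\Sigma}(U,\Phi)=(W,\Phi)$ and cancelling the bulk term using the PDE already obtained, I am left with an identity involving only the $\partial\Omega$-integral, the $\Sigma$-integrals coming from $dN_{\mathbf{D}}(u)$, $\beta$, and $\Theta_\Sigma$, and the $\Sigma$-component $\int_\Sigma w_2\,\varphi\,d\mu_\Sigma$.

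To separate the two surface contributions I would first restrict to $\varphi$ vanishing on $\Sigma$ (in the relative-capacity sense, so $\varphi|_\Sigma=0$) but arbitrary on $\Gamma_N$: this isolates $\int_{\Gamma_N}(\mathbf{D}\nabla u\cdot\nu)\varphi\,d\sigma=0$ for all such $\varphi$, yielding the Neumann condition $\partial_\nu^{\mathbf{D}}u=0$ on $\Gamma_N$ (the Dirichlet part $\Gamma_D$ being already encoded in $u\in W^{1,2}_{0,\Gamma_D}$). Then, letting $\varphi|_\Sigma$ range freely, the remaining identity reads
\begin{equation*}
\int_\Sigma \varphi\,dN_{\mathbf{D}}(u)+\int_\Sigma\beta(x)u\varphi\,d\mu_\Sigma+\langle\Theta_\Sigma(u),\varphi\rangle=\int_\Sigma w_2\,\varphi\,d\mu_\Sigma
\end{equation*}
for all admissible $\varphi$, whence $dN_{\mathbf{D}}(u)$ is absolutely continuous with respect to $\mu_\Sigma$ with density $w_2-\beta u-\Theta_\Sigma(u)$; equivalently $B_\Theta(u|_\Sigma)=\tfrac{dN_{\mathbf{D}}(u)}{d\mu_\Sigma}+\beta u+\Theta_\Sigma(u)=w_2\in L^2(\Sigma,\mu_\Sigma)$, which is exactly the asserted interfacial identity and regularity.

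The step I expect to be the main obstacle is the rigorous separation of the boundary and interface contributions in the non-smooth setting, and in particular justifying that the density of test functions $\{(\varphi,\varphi|_\Sigma):\varphi\in C^1(\overline\Omega),\ \varphi=0\text{ on }\Gamma_D\}$ in $D(\mathcal{A}_{\Theta,\Sigma})$ is rich enough to prescribe the values on $\Gamma_N$ and on $\Sigma$ independently. This is where the hypothesis \eqref{cap} (absolute continuity of $\mu_\Sigma$ with respect to $Cap_{\Omega\setminus\Sigma}$) and the relative quasi-continuity from Remark~\ref{rem-29} are essential: they guarantee that $u|_\Sigma$ is well-defined $\mu_\Sigma$-a.e. and that the trace map behaves compatibly, so the two surface integrals genuinely decouple rather than being conflated on some capacity-null exceptional set. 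Once this density and trace-compatibility are in place, the remaining verifications are routine applications of \eqref{NE} and the bilinearity of the form.
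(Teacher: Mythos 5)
Your overall strategy coincides with the paper's: both inclusions of the set equality, recovery of $-\operatorname{div}(\mathbf{D}\nabla u)=w_{1}$ by testing with $\varphi \in \mathcal{D}(\Omega \setminus \Sigma )$, and identification of the boundary and interfacial data by comparing the form relation with the generalized Green identity \eqref{NE}; the converse inclusion is in both cases a direct computation with \eqref{NE}. There is, however, a genuine logical flaw in your harder direction as written: you ``apply \eqref{NE}'' to rewrite $\int_{\Omega \setminus \Sigma }\mathbf{D}\nabla u\cdot \nabla \varphi \,dx$, which presupposes that the normal measure $N_{\mathbf{D}}(u)=N_{\Sigma }^{\star }(\mathbf{D}\nabla u)$ exists for the given $U\in D(A_{\Theta ,\Sigma })$. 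That existence is not free: knowing $u\in W_{0,\Gamma _{D}}^{1,2}(\Omega \setminus \Sigma )$ and $\operatorname{div}(\mathbf{D}\nabla u)\in L^{2}$ does not by itself produce a signed Radon normal measure on a rough $\Sigma $ --- in this setting the existence of $N_{\mathbf{D}}(u)$ is precisely part of what the proposition asserts (it is hidden in the condition $B_{\Theta }(u|_{\Sigma })\in L^{2}(\Sigma ,\mu _{\Sigma })$, since $B_{\Theta }$ is defined through $dN_{\mathbf{D}}(u)/d\mu _{\Sigma }$). As written, your argument assumes at this step what is to be proved.

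The fix is to run the logic in the opposite direction, which is what the paper does: from the form identity and $-\operatorname{div}(\mathbf{D}\nabla u)=w_{1}$ one first derives, for all $\varphi \in C^{1}(\overline{\Omega })$ with $\varphi =0$ on $\Gamma _{D}$,
\begin{equation*}
\int_{\Omega \setminus \Sigma }\mathbf{D}\nabla u\cdot \nabla \varphi \,dx
=\int_{\Omega \setminus \Sigma }w_{1}\varphi \,dx
+\int_{\Sigma }\bigl(w_{2}-\beta (x)u-\Theta _{\Sigma }(u)\bigr)\varphi \,d\mu _{\Sigma },
\end{equation*}
and then recognizes this as an instance of \eqref{NE} with vanishing Neumann datum on $\Gamma _{N}$ and $\eta =(w_{2}-\beta (x)u-\Theta _{\Sigma }(u))\,d\mu _{\Sigma }$. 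Existence of $N_{\mathbf{D}}(u)$, the condition $\partial _{\nu }^{\mathbf{D}}u=0$ on $\Gamma _{N}$ (distributionally), and the identification $dN_{\mathbf{D}}(u)=(w_{2}-\beta (x)u-\Theta _{\Sigma }(u))\,d\mu _{\Sigma }$ are then all read off at once, the decoupling of the $\Gamma _{N}$ and $\Sigma $ contributions being furnished by the uniqueness of the representation in \eqref{NE}; this is exactly where your correctly identified density/separation concern, and hypothesis \eqref{cap} together with the quasi-continuity of Remark \ref{rem-29}, actually enter. So your proposal contains all the required ingredients and, after this reordering, coincides with the paper's proof; but in the order you present it, the key step is circular.
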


\begin{proof}
Let ${A}_{\Theta ,{\Sigma }}$ be the closed linear self-adjoint operator on $%
\mathbb{X}^{2}(\Omega ,\Sigma )$ defined in \eqref{op-form-dy}. Set
\begin{align*}
{D}& :=\left\{ U=(u,u|_{\Sigma }):\;u\in W_{0,\Gamma _{D}}^{1,2}(\Omega
\backslash \Sigma ),\;u|_{\Sigma }\in \mathbb{B}_{d,s}^{2}(\Sigma ,\mu
_{\Sigma }),\;\;\text{div}(\mathbf{D}\nabla u)\in L^{2}(\Omega )\right. , \\
& \qquad \qquad \qquad \qquad \partial _{\nu }^{\mathbf{D}}u=0\;\mbox{ on }%
\;\Gamma _{N}\;\mbox{and }\;B_{\Theta }(u|_{\Sigma })\in L^{2}(\Sigma ,\mu
_{\Sigma })\}
\end{align*}%
and let $D({A}_{\Theta ,{\Sigma }})$ be given by \eqref{op-form-dy}. Let $%
U=(u,u|_{\Sigma })\in D(A_{\Theta ,{\Sigma }})$. Then by definition, there
exists $W=(w_{1},w_{2})\in \mathbb{X}^{2}(\Omega ,\Sigma )$ such that for
every $\varphi \in C^{1}(\overline{\Omega })$ with $\varphi =0$ on $\Gamma
_{D}$, we have
\begin{align}
\int_{\Omega \setminus \Sigma }w_{1}\varphi dx+\int_{\Sigma }w_{2}\varphi
d\mu _{\Sigma }=& \int_{\Omega \backslash \Sigma }\mathbf{D}\nabla u\cdot
\nabla \varphi dx+\int_{\Sigma }\beta \left( x\right) u\varphi d\mu _{\Sigma
}  \label{op-A-dy} \\
& +\int \int_{\Sigma \times \Sigma }K(x,y)(u(x)-u(y))(\varphi (x)-\varphi
(y))d\mu _{\Sigma }\left( x\right) d\mu _{\Sigma }\left( y\right)  \notag \\
=& \int_{\Omega \backslash \Sigma }\mathbf{D}\nabla u\cdot \nabla \varphi
dx+\int_{\Sigma }\beta \left( x\right) u\varphi d\mu _{\Sigma }+\int_{\Sigma
}\Theta _{\Sigma }(u)\varphi d\mu _{\Sigma }.  \notag
\end{align}%
In particular we get from \eqref{op-A-dy} that for every $\varphi \in
\mathcal{D}(\Omega \backslash \Sigma )$, we have%
\begin{equation}
\int_{\Omega \backslash \Sigma }w_{1}\varphi dx=\int_{\Omega \backslash
\Sigma }\mathbf{D}\nabla u\cdot \nabla \varphi dx.  \label{op-A2-dy}
\end{equation}%
It follows from \eqref{op-A2-dy} that
\begin{equation}
-\mbox{div}(\mathbf{D}\nabla u)=w_{1}\;\;\mbox{ in }\;\mathcal{D}(\Omega
\backslash \Sigma )^{\star }.  \label{distri}
\end{equation}%
Since $w_{1}\in L^{2}(\Omega \backslash \Sigma )$, we have that $\mbox{div}(%
\mathbf{D}\nabla u)\in L^{2}(\Omega \backslash \Sigma )$. Using %
\eqref{op-A-dy}, \eqref{op-A2-dy}, \eqref{distri} and \eqref{NE}, we get
that $\partial _{\nu }^{\mathbf{D}}u=0\;\mbox{ on }\;\Gamma _{N}$ (in the
distributional sense) and
\begin{equation*}
dN_{\mathbf{D}}(u)=\left( w_{2}-\beta (x)u-\Theta _{\Sigma }(u)\right) d\mu
_{\Sigma },
\end{equation*}%
so that
\begin{equation*}
{B}_{\Theta }(u|_{\Sigma }):=\frac{dN_{\mathbf{D}}(u)}{d\mu _{\Sigma }}%
+\beta (x)u+\Theta _{\Sigma }(u)=w_{2}\;\;\;\mbox{ on }\;\Sigma ,
\end{equation*}%
in the sense that for every $\varphi \in C^{1}(\overline{\Omega })$, $%
\varphi =0$ on $\Gamma _{D}$,
\begin{equation}
\int_{\Sigma }{B}_{\Theta }(u|_{\Sigma })\varphi d\mu _{\Sigma
}=\int_{\Sigma }w_{2}\varphi d\mu _{\Sigma }.  \label{form-equa}
\end{equation}%
Since $w_{2}\in L^{2}(\Sigma ,\mu _{\Sigma })$, it follows from %
\eqref{form-equa} that ${B}_{\Theta }(u|_{\Sigma })\in L^{2}(\Sigma ,\mu
_{\Sigma })$. Hence, $U\in {D}$ and we have shown that $D({A}_{\Theta ,{%
\Sigma }})\subset {D}$.

Conversely, let $U\in {D}$ and set $w_{1}:=-\text{div}(\mathbf{D}\nabla u)$
and $w_{2}={B}_{\Theta }(u|_{\Sigma }):=\frac{dN_{\mathbf{D}}(u)}{d\mu
_{\Sigma }}+\beta (x)u+\Theta _{\Sigma }(u)$. Then by hypothesis, $%
W=(w_{1},w_{2})\in \mathbb{X}^{2}(\Omega ,\Sigma )$. Moreover,
\begin{equation*}
w_{2}d\mu _{\Sigma }=dN_{\mathbf{D}}(u)+\beta (x)ud\mu _{\Sigma }+\Theta
_{\Sigma }(u)d\mu _{\Sigma },
\end{equation*}%
in the sense that for every $\varphi \in C^{1}(\overline{\Omega })$, $%
\varphi =0$ on $\Gamma _{D}$,
\begin{align}
\int_{\Sigma }\varphi w_{2}d\mu _{\Sigma }=& \int_{\Sigma }\varphi dN_{%
\mathbf{D}}(u)+\int_{\Sigma }\beta (x)\varphi ud\mu _{\Sigma }  \label{eq-mu}
\\
& +\int_{\Sigma }\int_{\Sigma }K(x,y)(\varphi (x)-\varphi
(y))(u(x)-u(y))\;d\mu _{\Sigma }(x)d\mu _{\Sigma }(y).  \notag
\end{align}

Let $\varphi \in C^{1}(\overline{\Omega })$ with $\varphi =0$ on $\Gamma
_{D} $ and set $\Phi :=(\varphi ,\varphi |_{\Sigma })$. Integrating by parts
(in the sense of the generalized Green type identity \eqref{NE}), and using %
\eqref{eq-mu} we infer
\begin{align*}
\int_{\Omega \setminus \Sigma }w_{1}\varphi dx+\int_{\Sigma }w_{2}\varphi
d\mu _{\Sigma }=& -\int_{\Omega \backslash \Sigma }\text{div}(\mathbf{D}%
\nabla u)\varphi dx+\int_{\Sigma }w_{2}\varphi d\mu _{\Sigma } \\
=& \int_{\Omega \backslash \Sigma }\mathbf{D}\nabla u\cdot \nabla \varphi
dx-\int_{\Gamma _{N}}\partial _{\nu }^{\mathbf{D}}u\varphi d\sigma
-\int_{\Sigma }\varphi dN_{\mathbf{D}}(u)+\int_{\Sigma }w_{2}\varphi d\mu
_{\Sigma } \\
=& \int_{\Omega \backslash \Sigma }\mathbf{D}\nabla u\cdot \nabla \varphi
dx-\int_{\Sigma }\varphi dN_{\mathbf{D}}(u)+\int_{\Sigma }\varphi dN_{%
\mathbf{D}}(u)+\int_{\Sigma }\beta (x)u\varphi d\mu _{\Sigma } \\
& +\int_{\Sigma }\int_{\Sigma }K(x,y)(u(x)-u(y))(\varphi (x)-\varphi
(y))d\mu _{\Sigma }(x)d\mu _{\Sigma }(y) \\
=& \int_{\Omega \backslash \Sigma }\mathbf{D}\nabla u\cdot \nabla \varphi
dx+\int_{\Sigma }\beta (x)u\varphi d\mu _{\Sigma } \\
& +\int_{\Sigma }\int_{\Sigma }K(x,y)(u(x)-u(y))(\varphi (x)-\varphi
(y))d\mu _{\Sigma }(x)d\mu _{\Sigma }(y) \\
=& {\mathcal{A}}_{\Theta ,{\Sigma }}(U,\Phi ).
\end{align*}%
We have shown that ${D}\subset D({A}_{\Theta ,{\Sigma }})$ and the proof is
finished.
\end{proof}

\begin{remark}
\emph{Note that if $\Omega \backslash \Sigma $ has the $\widetilde{W}^{1,2}$%
-extension property then $\Sigma $ satisfies \eqref{cap} (see \cite{BW,Wa}).
For more details on this subject we also refer the reader to \cite%
{AW1,AW2,BW,Fuk,Wa} and their references. In order to keep the exposition of
our main results in the subsequent sections more simple, we shall always
assume that $\Omega \backslash \Sigma $ satisfies the $\widetilde{W}^{1,2}$%
-extension property in Sections \ref{strong-sol}, \ref{gl}, \ref{bl}.}
\end{remark}

We have the following result of generation of semigroup.

\begin{theorem}
\label{theo-sg-dy} Let $A_{\Theta ,{\Sigma }}$ be the operator defined in %
\eqref{op-form-dy}. Then the following assertions hold.

\begin{enumerate}
\item Assume (\ref{cap}). The operator $-A_{\Theta ,{\Sigma }}$ generates a
Markovian semigroup $(e^{-tA_{\Theta ,{\Sigma }}})_{t\geq 0}$ on $\mathbb{X}%
^{2}(\Omega ,\Sigma )$. The semigroup can be extended to contraction
semigroups on $\mathbb{X}^{p}(\Omega ,\Sigma )$ for every $p\in \lbrack
1,\infty ]$, and each semigroup is strongly continuous if $p\in \lbrack
1,\infty )$ and bounded analytic if $p\in (1,\infty )$.

\item Assume that $\Omega \backslash \Sigma $ has the $\widetilde{W}^{1,2}$%
-extension property. Then the semigroup $(e^{-tA_{\Theta ,{\Sigma }%
}})_{t\geq 0}$ is ultracontractive in the sense that it maps $\mathbb{X}%
^{2}(\Omega ,\Sigma )$ into $\mathbb{X}^{\infty }(\Omega ,\Sigma )$ and each
semigroup on $\mathbb{X}^{p}(\Omega ,\Sigma )$ is compact for every $p\in
\lbrack 1,\infty ]$.

\item Assume that $\Omega \backslash \Sigma $ has the $\widetilde{W}^{1,2}$%
-extension property. Then the operator $A_{\Theta ,{\Sigma }}$ has a compact
resolvent, and hence has a discrete spectrum. The spectrum of $A_{\Theta ,{%
\Sigma }}$ is an increasing sequence of real numbers $0<\lambda _{1}\leq
\lambda _{2}\leq \cdots \leq \lambda _{n}\leq \dots ,$ that converges to $%
+\infty $. Moreover, if $U_{n}$ is an eigenfunction associated with $%
\lambda_{n}$, then $U_{n}\in D\left( A_{\Theta ,{\Sigma }}\right) \cap
\mathbb{X}^{\infty }(\Omega ,\Sigma )$.

\item Assume that $\Omega \backslash \Sigma $ has the $\widetilde{W}^{1,2}$%
-extension property. Then for each $\theta \in (0,1]$, the embedding $%
D(A_{\Theta ,\Sigma }^{\theta })\hookrightarrow \mathbb{X}^{\infty }\left(
\Omega ,\Sigma \right) $ is continuous provided that $\theta >\frac{\gamma}{4%
}$ with $\gamma =\frac{2d}{d-N+2}$.
\end{enumerate}
\end{theorem}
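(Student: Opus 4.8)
The plan is to build everything on the Dirichlet form structure of Proposition \ref{prop-dform} together with the Sobolev embeddings of Theorem \ref{Theo-Bie}; recall that under the extension hypothesis of (b)--(d) the capacity condition \eqref{cap} holds automatically, so the conclusion of (a) is available throughout. For (a), since $\mathcal{A}_{\Theta,\Sigma}$ is a closed Markovian (Dirichlet) form on $\mathbb{X}^2(\Omega,\Sigma)\cong L^2(\Omega,\eta)$, the abstract correspondence between Dirichlet forms and symmetric submarkovian semigroups recalled in the text (see \cite{Dav,Fuk}) immediately yields that $-A_{\Theta,\Sigma}$ generates a strongly continuous, symmetric, submarkovian semigroup on $\mathbb{X}^2$. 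Positivity preservation together with the $L^\infty$-contractivity built into the Markov property gives contractivity on $\mathbb{X}^1$ and on $\mathbb{X}^\infty$, and Riesz--Thorin interpolation extends it to a contraction semigroup on every $\mathbb{X}^p$, strongly continuous for $p<\infty$; bounded analyticity on $\mathbb{X}^p$ for $p\in(1,\infty)$ is the classical consequence of symmetry plus submarkovianity (Stein interpolation / \cite{Dav}). Thus (a) is a direct invocation of known abstract machinery.

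For (b), the decisive step is to merge the two separate estimates $\widetilde{W}^{1,2}(\Omega\setminus\Sigma)\hookrightarrow L^{2^\star}(\Omega)$ and $\widetilde{W}^{1,2}(\Omega\setminus\Sigma)\hookrightarrow L^{2_\star}(\Sigma,\mu_\Sigma)$ from Theorem \ref{Theo-Bie} into a single Sobolev inequality for the form. Since $d<N$, the interfacial exponent $2_\star=\frac{2d}{N-2}$ is the smaller of the two, so the combined embedding $D(\mathcal{A}_{\Theta,\Sigma})\hookrightarrow \mathbb{X}^{2_\star}(\Omega,\Sigma)$ holds and gives $\|U\|_{\mathbb{X}^{2_\star}}^2\le C\,\mathcal{A}_{\Theta,\Sigma,1}(U,U)$. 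Rewriting this in the canonical form $\|U\|_{\mathbb{X}^{2\nu/(\nu-2)}}^2\le C\,\mathcal{A}_{\Theta,\Sigma,1}(U,U)$ forces the effective dimension $\nu=\frac{2d}{d-N+2}=\gamma$, which is finite and positive precisely because $d\in(N-2,N)$. The Varopoulos--Davies equivalence between such a Sobolev inequality and heat-kernel bounds (Nash/Moser iteration, \cite{Dav}) then produces the ultracontractive estimate $\|e^{-tA_{\Theta,\Sigma}}\|_{\mathbb{X}^2\to\mathbb{X}^\infty}\le C\,t^{-\gamma/4}$ for $t\in(0,1]$. Compactness is handled separately: $D(\mathcal{A}_{\Theta,\Sigma})$ embeds \emph{compactly} into $\mathbb{X}^2$ (Rellich on the bulk component via the extension property, and compactness of the trace into $L^2(\Sigma,\mu_\Sigma)$ since $2<2_\star$ strictly), so $A_{\Theta,\Sigma}$ has compact resolvent and each $e^{-tA_{\Theta,\Sigma}}$ is compact on $\mathbb{X}^2$; factoring $e^{-tA_{\Theta,\Sigma}}=e^{-(t/2)A_{\Theta,\Sigma}}e^{-(t/2)A_{\Theta,\Sigma}}$ and combining with ultracontractivity transfers compactness to every $\mathbb{X}^p$. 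I expect (b) to be the main obstacle: assembling the two-component Sobolev inequality with the correct sharp exponent and feeding it into the abstract heat-kernel theorem is where the geometry of the rough interface enters, and it is the identification $\nu=\gamma$ that drives all remaining statements.

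Part (c) is then spectral bookkeeping. Compact resolvent plus self-adjointness gives a discrete spectrum $0\le\lambda_1\le\lambda_2\le\cdots\to+\infty$; since $\lambda_1=\inf_{U\neq 0}\mathcal{A}_{\Theta,\Sigma}(U,U)/\|U\|_{\mathbb{X}^2}^2$ is attained, strict positivity $\lambda_1>0$ is equivalent to triviality of the kernel. If $\mathcal{A}_{\Theta,\Sigma}(U,U)=0$ then $\mathbf{D}\ge d_0$ forces $\nabla u=0$ (so $u$ is locally constant), while $\beta\ge\beta_0>0$ forces $u|_\Sigma=0$; as $\Sigma=\overline{\Omega}_1\cap\overline{\Omega}_2$ borders each subdomain, the vanishing trace on $\Sigma$ forces those constants to vanish, whence $U=0$. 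That each eigenfunction lies in $\mathbb{X}^\infty$ is immediate from ultracontractivity: $U_n=e^{t\lambda_n}e^{-tA_{\Theta,\Sigma}}U_n\in\mathbb{X}^\infty$, and $U_n\in D(A_{\Theta,\Sigma})$ by definition.

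Finally, for (d) I would use the subordination formula $A_{\Theta,\Sigma}^{-\theta}=\frac{1}{\Gamma(\theta)}\int_0^\infty t^{\theta-1}e^{-tA_{\Theta,\Sigma}}\,dt$, legitimate by spectral calculus because $\lambda_1>0$. Estimating the right-hand side in $\mathbb{X}^\infty$ and splitting the integral at $t=1$: on $(0,1]$ the ultracontractive bound contributes $\int_0^1 t^{\theta-1-\gamma/4}\,dt\,\|f\|_{\mathbb{X}^2}$, which converges exactly when $\theta>\gamma/4$; on $[1,\infty)$ the spectral gap $\lambda_1>0$ gives $\|e^{-tA_{\Theta,\Sigma}}f\|_{\mathbb{X}^\infty}\le C e^{-\lambda_1(t-1)}\|f\|_{\mathbb{X}^2}$ and hence convergence for free. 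Therefore $A_{\Theta,\Sigma}^{-\theta}:\mathbb{X}^2\to\mathbb{X}^\infty$ is bounded whenever $\theta>\gamma/4$, which is precisely the asserted continuous embedding $D(A_{\Theta,\Sigma}^\theta)\hookrightarrow\mathbb{X}^\infty(\Omega,\Sigma)$.
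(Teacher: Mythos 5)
Your proposal is correct and follows essentially the same route as the paper: Dirichlet-form generation plus the Davies/Fukushima extension theory for (a), the Sobolev inequality $\Vert U\Vert_{\mathbb{X}^{2_\star}}^2\leq C\,\mathcal{A}_{\Theta,\Sigma}$-type bound feeding into Davies' ultracontractivity theorem and the compact embedding $D(\mathcal{A}_{\Theta,\Sigma})\hookrightarrow\mathbb{X}^{2}(\Omega,\Sigma)$ for (b), compact resolvent plus ultracontractive regularization of eigenfunctions for (c), and the Gamma-function representation of negative fractional powers split at $t=1$ for (d). The only cosmetic differences are that you spell out the interpolation and kernel-triviality steps the paper delegates to citations, and in (d) you use $A_{\Theta,\Sigma}^{-\theta}$ with the spectral gap $\lambda_{1}>0$ handling the tail, where the paper uses $(I+A_{\Theta,\Sigma})^{-\theta}$ with the factor $e^{-t}$; both give the same condition $\theta>\gamma/4$.
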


\begin{proof}
Let $A_{\Theta ,{\Sigma }}$ be the operator defined in \eqref{op-form-dy}.

(a) We have shown in Proposition \ref{prop-dform} that ${\mathcal{A}}%
_{\Theta ,{\Sigma }}$ is a Dirichlet form on $\mathbb{X}^{2}(\Omega ,\Sigma
) $. Hence, by \cite[Theorem 1.4.1]{Fuk} the operator $-{A}_{\Theta ,{\Sigma
}} $ generates a Markovian semigroup $(e^{-t{A}_{\Theta ,{\Sigma }}})_{t\geq
0}$ on $\mathbb{X}^{2}(\Omega ,\Sigma )$. It follows from \cite[Theorem 1.4.1%
]{Dav} that the semigroup can be extended to contraction semigroups on $%
\mathbb{X}^{p}(\Omega ,\Sigma )$ for every $p\in \lbrack 1,\infty ]$, and
each semigroup is strongly continuous if $p\in \lbrack 1,\infty )$ and
bounded analytic if $p\in (1,\infty )$.

(b) Now assume in the remainder of the proof that $\Omega \backslash \Sigma $
has the $\widetilde{W}^{1,2}$-extension property. Then exploiting %
\eqref{cont-em}, we ascertain the existence of a constant $C>0$ such that
for every $U=(u,u|_{\Sigma })\in D({\mathcal{A}}_{\Theta ,{\Sigma }})$,
\begin{equation}
\Vert |U\Vert |_{\mathbb{X}^{2_{\star }}(\Omega ,\Sigma )}^2\leq C{\mathcal{A%
}}_{\Theta ,{\Sigma }}(U,U)\;\mbox{ with }\;2_{\star }=\frac{2d}{N-2}=\frac{%
2\gamma }{\gamma -2},\;\;\gamma :=\frac{2d}{d-N+2}.  \label{sob-ul}
\end{equation}%
By \cite[Theorem 2.4.2]{Dav}, the estimate \eqref{sob-ul} implies that the
semigroup $(e^{-t{A}_{\Theta ,{\Sigma }}})_{t\geq 0}$ is ultracontractive.
More precisely, we have that there exists a constant $C>0$ such that for
every $F=(f_{1},f_{2})\in \mathbb{X}^{2}(\Omega ,\Sigma )$ and for every $%
t>0 $, we have
\begin{equation}
\Vert |e^{-t{A}_{\Theta ,{\Sigma }}}F\Vert |_{\mathbb{X}^{\infty }(\Omega
,\Sigma )}\leq Ct^{-\frac{\gamma }{4}}\Vert |F\Vert |_{\mathbb{X}^{2}(\Omega
,\Sigma )}.  \label{uultra}
\end{equation}%
The estimate \eqref{cont-em} also implies that the embedding $D({\mathcal{A}}%
_{\Theta ,{\Sigma }})\hookrightarrow \mathbb{X}^{2}(\Omega ,\Sigma )$ is
compact and this implies that the semigroup $(e^{-t{A}_{\Theta ,{\Sigma }%
}})_{t\geq 0}$ on $\mathbb{X}^{2}(\Omega ,\Sigma )$ is compact. Since $%
\Omega $ is bounded and $\mu_{\Sigma} (\Sigma )<\infty $, then the
compactness of the semigroup on $\mathbb{X}^{2}(\Omega ,\Sigma )$ together
with the ultracontractivity imply that the semigroup on $\mathbb{X}%
^{p}(\Omega ,\Sigma )$ is compact for every $p\in \lbrack 1,\infty ]$ (see,
e.g \cite[Theorem 1.6.4]{Dav}).

(c) The first part is an immediate consequence of (b) since $A_{\Theta ,{%
\Sigma }}$ is a\textit{\ positive} self-adjoint operator with compact
resolvent owing to the fact that $\beta \left( x\right) \geq \beta _{0}>0$
on $\Sigma $. Now let $U_{n}$ be an eigenfunction associated with $%
\lambda_{n}$. Then by definition, $U_{n}\in D\left( A_{\Theta ,{\Sigma }%
}\right)$. Since the semigroup $(e^{-t{A}_{\Theta ,{\Sigma }}})_{t\geq 0}$
is ultracontractive and $|\Omega|<\infty$, $\mu_{\Sigma}(\Sigma)<\infty$, it
follows from \cite[Theorem 2.1.4]{Dav} that $U_{n}\in\mathbb{X}^{\infty
}(\Omega ,\Sigma )$ and this completes the proof of this part.

(d) Since the operator $I+A_{\Theta ,\Sigma }$ is invertible we have that
the $\mathbb{X}^{2}\left( \Omega ,\Sigma \right) $-norm of $\left(
I+A_{\Theta ,\Sigma }\right) ^{\theta }$ defines an equivalent norm on $%
D(A_{\Theta ,\Sigma }^{\theta })$. Moreover for every $F\in \mathbb{X}%
^2(\Omega,\Sigma)$,
\begin{equation*}
\left( I+A_{\Theta ,\Sigma }\right) ^{-\theta }F=\frac{1}{\Gamma \left(
\theta \right) }\int_{0}^{\infty }t^{\theta -1}e^{-t}e^{-tA_{\Theta ,\Sigma
}}Fdt.
\end{equation*}%
Using (\ref{uultra}) for $t\in \left( 0,1\right) $ and the contractivity of $%
e^{-tA_{\Theta ,\Sigma }}$ for $t\ge 1$, for $u\in D(A_{\Theta ,\Sigma
}^{\theta })$, we deduce that there exists a constant $C>0$ such that
\begin{equation*}
\left\Vert u\right\Vert _{\mathbb{X}^{\infty }\left( \Omega ,\Sigma \right)
}\leq C\left\Vert u\right\Vert _{D(A_{\Theta ,\Sigma }^{\theta
})}\int_{0}^{1}t^{\theta -1-\frac{\gamma }{4}}dt+C\left\Vert u\right\Vert
_{D(A_{\Theta ,\Sigma }^{\theta })}\int_{1}^{\infty }e^{-t}dt.
\end{equation*}%
The first integral is finite if and only if $\gamma <4\theta $. This
completes the proof of the theorem.
\end{proof}

\begin{remark}
\emph{If $\theta \in (0,1]$ and $\Sigma$ is a Lipschitz hypersurface of
dimension $d=N-1$, hence, $\mu _{\Sigma }=\sigma _{\Sigma }$, then the
embedding $D(A_{\Theta ,\Sigma }^{\theta })\hookrightarrow \mathbb{X}%
^{\infty }\left( \Omega ,\Sigma \right) $ holds provided that $2\theta +1>N$.%
}
\end{remark}

\section{Well-posedness results}

\label{strong-sol}

We recall that the initial value problem associated with \eqref{p1}, %
\eqref{p2} and \eqref{m2} is the transmission problem%
\begin{equation}
\partial _{t}u-\text{div}\left( \mathbf{D}\nabla u\right) +f\left( u\right)
=0\text{, in }\,J\times (\Omega \backslash \Sigma ),  \label{p1b}
\end{equation}%
subject to the boundary conditions of the form%
\begin{equation}
u=0\text{ on }J\times \Gamma _{D},\;\;\;\text{ }(\mathbf{D}\nabla u)\cdot
\nu =0\text{ on }J\times \Gamma _{N},  \label{p2b}
\end{equation}%
the \emph{interfacial} boundary condition is given by
\begin{equation}
dN_{\mathbf{D}}(u)+\left( \partial _{t}u+\beta \left( x\right) u+\Theta _{{%
\Sigma }}\left( u\right) \right)d\mu_{\Sigma} =h\left(
u\right)d\mu_{\Sigma}, \text{ on }\,\;\;J\times \Sigma ,  \label{p3b}
\end{equation}%
and the initial conditions%
\begin{equation}
u\left( 0\right) =u_{0}\text{ in }\Omega \backslash \Sigma ,\text{ }u\left(
0\right) =v_{0}\text{ on }\Sigma ,  \label{p4b}
\end{equation}%
where $J=\left( 0,T\right)$, for some $T>0$ and some given functions $u_0$
and $v_{0}$. We emphasize that $v_{0}$ needs not necessarily be the trace of
$u_{0}$ to $\Sigma$, since $u_0$ will not be assumed to have a trace on $%
\Sigma$. But if $u_0$ has a well defined trace on $\Sigma$, then $v_0$ will
coincide with $u_0|_{\Sigma}$. 

In what follows we shall use classical (linear/nonlinear semigroup)
definitions of generalized solutions to \eqref{p1b}-\eqref{p4b}.
\textquotedblleft Generalized\textquotedblright\ solutions are defined via
nonlinear semigroup theory for bounded initial data and satisfy the
differential equations almost everywhere in $t>0$.

\begin{definition}
\label{solu} Let $\left( u_{0},v_{0}\right) \in \mathbb{X}^{\infty }(\Omega
,\Sigma )$. The function $u$ is said to be a \emph{strong} solution of %
\eqref{p1b}-\eqref{p4b} if, for a.e. $t\in \left( 0,T\right) ,$ for any $T>0$%
, the following properties are valid:

\begin{itemize}
\item Regularity:%
\begin{equation}
U=(u,u|_{\Sigma })\in W_{\text{loc}}^{1,\infty }((0,T];\mathbb{X}^{2}(\Omega
,\Sigma ))\cap C(\left[ 0,T\right] ;\mathbb{X}^{\infty }(\Omega ,\Sigma
))\cap C_{\text{loc}}((0,T];D(\mathcal{A}_{\Theta ,\Sigma }))
\label{reg_weak}
\end{equation}%
such that $U\left( t\right) \in D(A_{\Theta ,\Sigma }),$ a.e. $t\in \left(
0,T\right) ,$ for any $T>0.$

\item The following variational identity%
\begin{align}
& \int_{\Omega \backslash \Sigma }\partial _{t}u\left( t\right) \xi
dx+\int_{\Sigma }\partial _{t}u\left( t\right) \xi |_{\Sigma }d\mu _{\Sigma
}+\mathcal{A}_{\Theta ,\Sigma }(U\left( t\right) ,\xi )+\int_{\Omega
\backslash \Sigma }f\left( u\left( t\right) \right) \xi dx  \label{de_form}
\\
& =\int_{\Sigma }h\left( u\left( t\right) |_{\Sigma }\right) \xi |_{\Sigma
}d\mu _{\Sigma }  \notag
\end{align}%
holds for all $\xi =(\xi ,\xi |_{\Sigma })\in D({\mathcal{A}}_{\Theta ,{%
\Sigma }}),$ a.e. $t\in \left( 0,T\right) $.

\item We have $(u\left( t\right) ,u\left( t\right) |_{\Sigma })\rightarrow
\left( u_{0},v_{0}\right) $ strongly in $\mathbb{X}^{\infty}(\Omega ,\Sigma
) $ as $t\rightarrow 0^{+}$.
\end{itemize}
\end{definition}

Throughout the remainder of the article, we will always say that $%
U(t)=(u(t),u(t)|_{\Sigma})$ is a strong solution to the transmission problem %
\eqref{p1b}-\eqref{p4b}.

We will now recall some results for a non-homogeneous Cauchy problem%
\begin{equation}
\left\{
\begin{array}{ll}
u^{\prime }(t)+A\left( u\right) \ni \mathcal{G}\left( t\right) , & t\in
\left(0,T\right) , \\
u(0)=u_{0}\text{.} &
\end{array}%
\right.  \label{nonhom}
\end{equation}

\begin{theorem}
\label{m11}\cite[Chapter IV, Theorem 4.3]{Scho} Let $H$ be a Hilbert space, $%
\varphi :H\rightarrow (-\infty ,+\infty ]$ a proper, convex, and
lower-semicontinuous functional on $H$ and set $A:=\partial \varphi$, the
subdifferential of $\varphi$. Let $u$ be the generalized solution of %
\eqref{nonhom} with $\mathcal{G}\in L^{2}\left((0,T);H\right) $ and $%
u_{0}\in \overline{D\left( A\right) }.$ Then $\varphi \left( u\right) \in
L^{1}\left( 0,T\right) ,$ $\sqrt{t}u^{\prime }(t)\in
L^{2}\left((0,T);H\right) $ and $u\left( t\right) \in D\left( A\right) $ for
a.e. $t\in \left( 0,T\right).$
\end{theorem}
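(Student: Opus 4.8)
The plan is to exploit the variational structure $A=\partial\varphi$ through energy estimates obtained by testing \eqref{nonhom} against $u'(t)$ and against $tu'(t)$, combined with an approximation argument transferring those estimates from regular initial data to the general case $u_0\in\overline{D(A)}$. Since $\varphi$ is proper, convex and lower-semicontinuous it admits a continuous affine minorant; subtracting it (which only shifts $A$ by a fixed vector, hence changes $\mathcal{G}$ by a term still in $L^2$) lets me assume without loss of generality that $\varphi\ge 0$. The engine of the whole argument is the chain rule for convex functionals: if $u$ is absolutely continuous with $u(t)\in D(\partial\varphi)$ for a.e. $t$, then $t\mapsto\varphi(u(t))$ is absolutely continuous and $\frac{d}{dt}\varphi(u(t))=\langle g(t),u'(t)\rangle$ for every measurable selection $g(t)\in\partial\varphi(u(t))$.

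First I would establish the conclusion for $u_0\in D(\varphi)$, where the classical theory already produces a strong solution with $u'\in L^2((0,T);H)$. Writing $g(t)=\mathcal{G}(t)-u'(t)\in Au(t)$ and testing against $u'(t)$ gives, by Young's inequality, $\tfrac12\|u'(t)\|^2+\frac{d}{dt}\varphi(u(t))\le\tfrac12\|\mathcal{G}(t)\|^2$, whence $\varphi\circ u\in L^\infty(0,T)$. To reach the weighted estimate valid for rough data I test instead against $tu'(t)$, obtaining $\tfrac{t}{2}\|u'(t)\|^2+t\frac{d}{dt}\varphi(u(t))\le\tfrac{t}{2}\|\mathcal{G}(t)\|^2$; integrating by parts in $t$ on the middle term yields
\[
\tfrac12\int_0^T t\|u'(t)\|^2\,dt+T\varphi(u(T))\le\tfrac12\int_0^T t\|\mathcal{G}(t)\|^2\,dt+\int_0^T\varphi(u(t))\,dt,
\]
where the factor $t$ has absorbed the initial datum so that the right-hand side no longer involves $\varphi(u_0)$.

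The main obstacle is to bound $\int_0^T\varphi(u(t))\,dt$ \emph{uniformly} along an approximating sequence, since for $u_0\notin D(\varphi)$ it cannot be controlled by $\varphi(u_0)$. I would resolve this with the subdifferential inequality against a fixed reference point $z\in D(\varphi)$: from $\varphi(u(t))-\varphi(z)\le\langle\mathcal{G}(t)-u'(t),u(t)-z\rangle$ and $\int_0^T\langle u'(t),u(t)-z\rangle\,dt=\tfrac12\|u(T)-z\|^2-\tfrac12\|u_0-z\|^2$ one gets
\[
\int_0^T\varphi(u(t))\,dt\le T\varphi(z)+\int_0^T\langle\mathcal{G}(t),u(t)-z\rangle\,dt+\tfrac12\|u_0-z\|^2.
\]
Since two solutions satisfy the contraction $\|u(t)-\hat u(t)\|\le\|u_0-\hat u_0\|$, the approximants are uniformly bounded in $L^\infty((0,T);H)$, so this right-hand side, and consequently the weighted estimate, stay bounded independently of the approximation.

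Finally I would pass to the limit. Approximating $u_0\in\overline{D(A)}=\overline{D(\varphi)}$ by $u_{0,n}\in D(\varphi)$, the contraction property gives $u_n\to u$ in $C([0,T];H)$, while the uniform bounds make $\sqrt{t}\,u_n'$ bounded in $L^2((0,T);H)$; extracting a weakly convergent subsequence and identifying the limit yields $\sqrt{t}\,u'\in L^2((0,T);H)$. Lower semicontinuity of the convex integral functional $v\mapsto\int_0^T\varphi(v(t))\,dt$ then gives $\varphi\circ u\in L^1(0,T)$. For $u(t)\in D(A)$ a.e., I would set $g_n(t)=\mathcal{G}(t)-u_n'(t)\in Au_n(t)$ and invoke the demiclosedness of the maximal monotone graph of $A=\partial\varphi$: since $u_n(t)\to u(t)$ strongly and $u_n'(t)\rightharpoonup u'(t)$ in the weighted sense, the limit satisfies $\mathcal{G}(t)-u'(t)\in Au(t)$ for a.e. $t$, which is precisely $u(t)\in D(A)$. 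The delicate points are the rigorous justification of the chain rule and of the integration by parts in $t$ (which I would perform on the regular approximants, where $u_n'\in L^2$ is already available) and the uniformity of all constants across the density argument.
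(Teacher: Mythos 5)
The paper does not prove this statement at all—it is quoted from Showalter \cite[Chapter IV, Theorem 4.3]{Scho}—so the only meaningful comparison is with the standard proof in that reference (and in Brezis' monograph on maximal monotone operators), and your argument is a correct reconstruction of exactly that proof: the $u'$ and $tu'$ energy tests combined with the convex chain rule, the subdifferential inequality against a fixed $z\in D(\varphi)$ to control $\int_0^T\varphi(u(t))\,dt$ without reference to $\varphi(u_0)$, and the contraction/density passage from $D(\varphi)$ to $\overline{D(A)}$ via weak compactness of $\sqrt{t}\,u_n'$, Fatou with the normalization $\varphi\ge 0$, and closedness of the monotone graph. The only two points to phrase carefully are that the strong solvability with $u'\in L^2((0,T);H)$ for $u_0\in D(\varphi)$, which you invoke as ``classical,'' is indeed an independently established prior result (proved by Yosida approximation, so there is no circularity, and one also uses $\overline{D(A)}=\overline{D(\varphi)}$), and that the identification $\mathcal{G}(t)-u'(t)\in Au(t)$ a.e.\ should be carried out through the maximal monotone realization of $A$ on $L^2((\delta,T);H)$ for each $\delta>0$ rather than pointwise in $t$.
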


In Theorem \ref{m11} and below, by a generalized solution $u$ of %
\eqref{nonhom}, we mean a function $u\in C([0,T];H)$ for which there exists
a sequence of (absolutely continuous) solutions $u_n$ of
\begin{align*}
u_n^{\prime }(t)+A\left( u_n\right) \ni \mathcal{G}_n\left(
t\right),\;\;n\ge 1,
\end{align*}
with $\mathcal{G}_n\to \mathcal{G}$ in $L^1((0,T);H)$ and $u_n\to u$ in $%
C([0,T];H)$ as $n\to\infty$.

The second one is a more general version of \cite[Chapter IV, Proposition 3.2%
]{Scho} and was proved in our recent work \cite[Theorem 6.3 and Corollary 6.4%
]{GW}.

\begin{theorem}
\label{ap_reg_thm}Let the assumptions of Theorem \ref{m11} be satisfied.
Assume that $A$ is \emph{strongly} accretive in $H$, that is, $A-\omega I$
is accretive for some $\omega \ge 0$ and, in addition,%
\begin{equation}
\mathcal{G}\in L^{\infty }\left( (\tau ,\infty );H\right) \cap W^{1,2}\left(
(\tau ,\infty );H\right) ,  \label{ext}
\end{equation}%
for every $\tau >0$. Let $u$ be the unique generalized solution of \eqref
{nonhom} for $u_{0}\in \overline{D\left( A\right) }$. Then
\begin{equation}
u\in L^{\infty }\left( (\tau ,\infty );D\left( A\right) \right) \cap
W^{1,\infty }\left( (\tau ,\infty );H\right) .  \label{ext2}
\end{equation}
\end{theorem}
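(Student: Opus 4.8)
The plan is to bootstrap the a.e.-in-time regularity furnished by Theorem \ref{m11} up to uniform-in-time $L^\infty$ bounds, using the classical time-translation (difference-quotient) technique together with the strong accretivity hypothesis, and then to close the resulting energy estimate by a uniform Gronwall argument.

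First I would record what Theorem \ref{m11} already gives. Since $\mathcal{G}\in L^2((0,T);H)$ on every bounded interval and $u_0\in\overline{D(A)}$, the generalized solution $u$ is in fact a strong solution for positive times, with $u(t)\in D(A)$ for a.e. $t$, $\sqrt{t}\,u'\in L^2((0,T);H)$ and $\varphi(u)\in L^1(0,T)$. In particular there is a measurable selection $\xi(t)\in Au(t)=\partial\varphi(u(t))$ with $u'(t)+\xi(t)=\mathcal{G}(t)$ for a.e. $t$, and $u$ is locally absolutely continuous on $(0,\infty)$, so the chain rule $\frac{d}{dt}\varphi(u(t))=\langle\xi(t),u'(t)\rangle$ holds a.e.

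Next I would differentiate in time by translation. For $h>0$ set $w_h(t):=u(t+h)-u(t)$; subtracting the two copies of the equation and pairing with $w_h(t)$, the monotonicity of $A-\omega I$ (that is, $\langle\xi(t+h)-\xi(t),w_h(t)\rangle\geq\omega\|w_h(t)\|^2$) yields
\[
\tfrac12\frac{d}{dt}\|w_h(t)\|^2+\omega\|w_h(t)\|^2\leq\langle\mathcal{G}(t+h)-\mathcal{G}(t),w_h(t)\rangle.
\]
Dividing by $h^2$ and letting $h\to0^+$, using $\mathcal{G}\in W^{1,2}((\tau,\infty);H)$ to identify the limit of the difference quotients of $\mathcal{G}$ with $\mathcal{G}'$, I obtain for $v:=u'$ the differential inequality
\[
\tfrac12\frac{d}{dt}\|v(t)\|^2+\omega\|v(t)\|^2\leq\langle\mathcal{G}'(t),v(t)\rangle\leq\tfrac12\|\mathcal{G}'(t)\|^2+\tfrac12\|v(t)\|^2,\qquad\text{a.e. }t>\tau.
\]

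Finally I would close the estimate. Testing the original equation with $u'$ and using the chain rule gives $\|u'(t)\|^2+\frac{d}{dt}\varphi(u(t))=\langle\mathcal{G}(t),u'(t)\rangle$, whence, with $\mathcal{G}\in L^\infty((\tau,\infty);H)$ and the strong accretivity ensuring $u$ (and hence $\varphi(u)$, which is bounded below by an affine functional) stays bounded along the trajectory, the quantity $\int_t^{t+1}\|u'\|^2\,ds$ is bounded uniformly in $t\geq\tau$. Feeding this together with $\mathcal{G}'\in L^2$ into the uniform Gronwall lemma applied to the inequality above produces $\sup_{t\geq\tau}\|u'(t)\|<\infty$, i.e. $u\in W^{1,\infty}((\tau,\infty);H)$. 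The $D(A)$-bound is then immediate: from $\xi(t)=\mathcal{G}(t)-u'(t)$ one has $\|\xi(t)\|\leq\|\mathcal{G}\|_{L^\infty((\tau,\infty);H)}+\|u'\|_{L^\infty((\tau,\infty);H)}$, so, together with the $L^\infty$-in-time bound on $u$ itself, $u\in L^\infty((\tau,\infty);D(A))$ in the graph norm. The main obstacle I anticipate is making the limit $h\to0^+$ rigorous — justifying convergence of the difference quotients of $u$ to $u'$ in the sense needed to preserve the differential inequality — and, since $\omega$ is allowed to vanish, extracting a genuinely global-in-$(\tau,\infty)$ bound rather than one degenerating as $t\to\infty$, which is exactly where the uniform Gronwall lemma and the $L^\infty$-boundedness of $\mathcal{G}$ (not merely its $L^2$ integrability) are essential.
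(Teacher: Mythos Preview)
The paper does not actually prove this theorem; it merely states it and cites the authors' companion work \cite[Theorem 6.3 and Corollary 6.4]{GW} for the proof. So there is no argument in the present paper to compare yours against. That said, your outline --- time-differencing to obtain a differential inequality for $\|u'\|^2$ via the accretivity of $A-\omega I$, a uniform-in-$t$ bound on $\int_t^{t+1}\|u'\|^2$ from the energy identity $\|u'\|^2+\frac{d}{dt}\varphi(u)=\langle\mathcal G,u'\rangle$, and then the uniform Gronwall lemma to upgrade to $u'\in L^\infty$, followed by $\xi=\mathcal G-u'$ for the $D(A)$ bound --- is the standard route to results of this type (it is essentially the adaptation of Br\'ezis' argument for the autonomous case to a nonautonomous forcing) and is almost certainly what the cited proof does as well.

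Two remarks on the caveats you raise. First, your worry about $\omega=0$ is really a defect of the theorem statement rather than of your proof: for a subdifferential $A=\partial\varphi$ the hypothesis ``$A-\omega I$ accretive for some $\omega\ge0$'' is vacuous at $\omega=0$, and the trivial example $H=\mathbb R$, $\varphi\equiv0$, $\mathcal G(t)=(1+t)^{-1}$ shows $u\in L^\infty((\tau,\infty);D(A))$ can fail in the graph norm. In the paper's only application (proof of Theorem~\ref{SG}) the operator satisfies $\mathcal A_{\Theta,\Sigma}(U,U)\ge C\|U\|^2$ with $C>0$, so one should read the hypothesis as $\omega>0$, under which your boundedness-of-$u$ step goes through. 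Second, the passage $h\to0$ is cleanest if you do not try to produce a pointwise differential inequality for $u'$: integrate the inequality for $\|w_h\|^2$ between $s$ and $t$ first, control the right-hand side by the $W^{1,2}$-norm of $\mathcal G$, and only then divide by $h^2$ and pass to the limit using $w_h/h\to u'$ in $L^2_{\mathrm{loc}}$.
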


We need a Poincar\'{e}-type inequality in the space $\widetilde
W^{1,1}\left( \Omega \backslash \Sigma \right) .$

\begin{lemma}
\label{poincare-weak}Assume $\Omega \backslash \Sigma $ has the $\widetilde{W%
}^{1,1}$-extension property and that there exists a trace operator%
\begin{equation*}
T:\widetilde{W}^{1,1}\left( \Omega \backslash \Sigma \right) \rightarrow
L^{1}\left( \Sigma ,d\mu _{\Sigma }\right) ,\text{ }Tu=u|_{\Sigma }
\end{equation*}%
which is linear and bounded. Then there is a constant $C_{\Sigma ,\Omega
}=C\left( \mu _{\Sigma }\left( \Sigma \right) ,\left\vert \Omega \right\vert
\right) >0$ independent of $u$ such that
\begin{equation}  \label{poinc-1}
\left\Vert u-\frac{1}{\mu _{\Sigma }\left( \Sigma \right) }\int_{\Sigma
}ud\mu _{\Sigma }\right\Vert _{L^{1}\left( \Omega \backslash \Sigma \right)
}\leq C_{\Sigma ,\Omega }\left\Vert \nabla u\right\Vert _{L^{1}\left( \Omega
\backslash \Sigma \right) },
\end{equation}
for all $u\in \widetilde{W}^{1,1}\left( \Omega \backslash \Sigma \right) .$
\end{lemma}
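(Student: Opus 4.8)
The plan is to establish the Poincar\'e inequality \eqref{poinc-1} by a standard compactness-contradiction argument adapted to the space $\widetilde{W}^{1,1}(\Omega\setminus\Sigma)$, using the hypothesized compact-type structure together with the boundedness of the trace operator $T$. First I would define the functional
\[
P(u):=u-\frac{1}{\mu_\Sigma(\Sigma)}\int_\Sigma u\,d\mu_\Sigma,
\]
which is a bounded linear map on $\widetilde{W}^{1,1}(\Omega\setminus\Sigma)$ by linearity of $T$ and boundedness of the averaging functional $u\mapsto \int_\Sigma u\,d\mu_\Sigma=\int_\Sigma Tu\,d\mu_\Sigma$, the latter controlled by $\|Tu\|_{L^1(\Sigma,\mu_\Sigma)}\le C\|u\|_{\widetilde{W}^{1,1}}$. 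The estimate to be proved says $\|P(u)\|_{L^1(\Omega\setminus\Sigma)}\le C_{\Sigma,\Omega}\|\nabla u\|_{L^1(\Omega\setminus\Sigma)}$, so the natural route is a contradiction argument: suppose no such constant exists, so there is a sequence $u_n\in\widetilde{W}^{1,1}(\Omega\setminus\Sigma)$ with
\[
\|P(u_n)\|_{L^1(\Omega\setminus\Sigma)}=1,\qquad \|\nabla u_n\|_{L^1(\Omega\setminus\Sigma)}\to 0.
\]

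Next I would normalize by setting $w_n:=P(u_n)$ and observe that $\nabla w_n=\nabla u_n$ (the averaging term is a constant), so $\|\nabla w_n\|_{L^1}\to 0$ while $\|w_n\|_{L^1(\Omega\setminus\Sigma)}=1$; thus $(w_n)$ is bounded in $\widetilde{W}^{1,1}(\Omega\setminus\Sigma)$. Invoking the $\widetilde{W}^{1,1}$-extension property, I extend each $w_n$ to $W_n\in W^{1,1}(\mathbb R^N)$ with uniformly bounded norm, and on a fixed bounded neighborhood of $\overline\Omega$ apply the Rellich--Kondrachov-type compact embedding $W^{1,1}\hookrightarrow\hookrightarrow L^1$ to extract a subsequence with $w_n\to w$ strongly in $L^1(\Omega\setminus\Sigma)$. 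Since $\nabla w_n\to 0$ in $L^1$, the limit $w$ satisfies $\nabla w=0$ in the distributional sense on $\Omega\setminus\Sigma$, so $w$ is constant on each connected component (and, because $\Omega$ is connected and $\Sigma$ is the shared interface, one must argue $w$ is globally constant across $\Sigma$ on $\widetilde{W}^{1,1}$, which is precisely where the tilde-space structure — closure of functions continuous on $\overline\Omega$ — enters). Strong $L^1$ convergence gives $\|w\|_{L^1(\Omega\setminus\Sigma)}=1$, so $w$ is a nonzero constant.

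The final step is to reach a contradiction via the trace. By boundedness of $T$ and strong convergence $w_n\to w$ in $\widetilde{W}^{1,1}$ (the full $W^{1,1}$-norm convergence follows since both $\|w_n-w\|_{L^1}\to 0$ and $\|\nabla(w_n-w)\|_{L^1}\to 0$), the traces converge, $Tw_n\to Tw$ in $L^1(\Sigma,\mu_\Sigma)$, hence $\int_\Sigma w_n\,d\mu_\Sigma\to\int_\Sigma w\,d\mu_\Sigma$. But by construction $\int_\Sigma w_n\,d\mu_\Sigma=\int_\Sigma P(u_n)\,d\mu_\Sigma=0$ for every $n$, forcing $\int_\Sigma w\,d\mu_\Sigma=0$; since $w$ is the constant $c$ and $\mu_\Sigma(\Sigma)>0$, this yields $c=0$, contradicting $\|w\|_{L^1}=1$. \textbf{The main obstacle} I anticipate is justifying that the distributional gradient vanishing on $\Omega\setminus\Sigma$ forces $w$ to be a single global constant rather than a pair of independent constants on $\Omega_1$ and $\Omega_2$; this is genuine because $\widetilde{W}^{1,1}(\Omega\setminus\Sigma)$ is the $W^{1,1}$-closure of functions \emph{continuous on all of $\overline\Omega$}, which glues the two sides across $\Sigma$ and rules out a jump, so the limit inherits this gluing and is constant throughout. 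The compactness of the embedding (via the extension property) and the continuity of $T$ are the two inputs that make the contradiction scheme close, and I would cite the extension property and Theorem~\ref{Theo-Bie}-type trace results as the enabling facts.
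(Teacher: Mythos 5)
Your proposal follows the same route as the paper's proof: a compactness--contradiction argument in which the $\widetilde{W}^{1,1}$-extension property yields a subsequence converging strongly in $L^{1}(\Omega\setminus\Sigma)$, the limit $w$ has vanishing distributional gradient and is therefore constant on each of the connected components $\Omega_{1}$ and $\Omega_{2}$, and the trace operator is used to pass the constraint $\int_{\Sigma}w_{n}\,d\mu_{\Sigma}=0$ to the limit. Up to normalization (the paper assumes directly $\int_{\Sigma}u_{n}\,d\mu_{\Sigma}=0$ and $\Vert u_{n}\Vert_{L^{1}}=1$ instead of working with $P(u_{n})$), these steps coincide with the paper's.

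There is, however, a genuine gap at exactly the step you flag as the main obstacle: the claim that $w$ is a single global constant rather than two different constants $C_{1}$ on $\Omega_{1}$ and $C_{2}$ on $\Omega_{2}$. Your justification --- that $\widetilde{W}^{1,1}(\Omega\setminus\Sigma)$ is the $W^{1,1}$-closure of functions continuous on $\overline{\Omega}$, so ``the limit inherits this gluing and rules out a jump'' --- is not a valid argument: norm closure in $W^{1,1}(\Omega\setminus\Sigma)$ does not preserve continuity across $\Sigma$. Indeed, the first remark in Section \ref{sub-22} of the paper states that in general (precisely in the fractal case $d\neq N-1$ that the paper targets) $\widetilde{W}^{1,2}(\Omega\setminus\Sigma)\neq\widetilde{W}^{1,2}(\Omega)=W^{1,2}(\Omega)$, i.e.\ the tilde space does contain functions that fail to ``glue'' across $\Sigma$; so the structural property you invoke cannot by itself exclude $w=C_{1}\chi_{\Omega_{1}}+C_{2}\chi_{\Omega_{2}}$ with $C_{1}\neq C_{2}$. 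Without $C_{1}=C_{2}$ your final step collapses: you would only learn $\int_{\Sigma}Tw\,d\mu_{\Sigma}=0$, which is perfectly compatible with $|C_{1}||\Omega_{1}|+|C_{2}||\Omega_{2}|=1$ when $Tw$ is not known to be constant. The paper closes this step by deploying the hypothesized trace operator at precisely this point: since $u_{n}\to u$ strongly in $\widetilde{W}^{1,1}(\Omega\setminus\Sigma)$ (gradients included), boundedness of $T$ gives $u_{n}|_{\Sigma}\to u|_{\Sigma}$ in $L^{1}(\Sigma,\mu_{\Sigma})$, and the uniqueness (single-valuedness) of the trace on $\Sigma$ is what forces $C_{1}=C_{2}=C$; only then do $C|\Omega|=1$ and $\int_{\Sigma}u\,d\mu_{\Sigma}=C\mu_{\Sigma}(\Sigma)=0$ clash. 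In short, the no-jump information must come from the trace hypothesis --- this is why that hypothesis appears in the statement of the lemma --- whereas in your write-up the trace operator is used only for the mean-zero passage, and its decisive use is missing.
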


\begin{proof}
We notice that since $\Omega \backslash \Sigma $ has the $\widetilde W^{1,1}$%
-extension property, we have that the classical Sobolev embedding yields%
\begin{equation}
\widetilde W^{1,1}\left( \Omega \backslash \Sigma \right) \hookrightarrow L^{%
\frac{N}{N-1}}(\Omega\setminus\Sigma)  \label{cont-1}
\end{equation}%
with continuous inclusion. To prove \eqref{poinc-1}, it suffices to show
that there exists a constant $C>0$ such that for every $u\in \widetilde
W^{1,1}(\Omega \backslash \Sigma )$ with $\int_{\Sigma }ud\mu _{\Sigma }=0$
and $\Vert u\Vert _{L^{1}(\Omega \setminus\Sigma)}=1 $, we have that $1\leq
C\Vert \nabla u\Vert _{L^{1}(\Omega\setminus\Sigma )}$. Indeed, assume to
the contrary that there exists a sequence $u_{n}\in \widetilde
W^{1,1}(\Omega \backslash \Sigma )$ such that
\begin{equation*}
\int_{\Sigma }u_{n}d\mu _{\Sigma }=0,\;\;\Vert u_{n}\Vert
_{L^{1}(\Omega\setminus\Sigma )}=1\;\;\mbox{ and }\;\Vert \nabla u_{n}\Vert
_{L^{1}(\Omega\setminus\Sigma )}\leq \frac{1}{n},\;\;\forall\,n\in\mathbb{N}.
\end{equation*}%
Then, $u_{n}$ is a bounded sequence in $\widetilde W^{1,1}(\Omega \backslash
\Sigma )$. Since the embedding $\widetilde W^{1,1}(\Omega \backslash \Sigma
)\hookrightarrow L^{1}(\Omega\setminus\Sigma )$ is compact (this follows
from \eqref{cont-1}), then taking a subsequence if necessary, we have that $%
u_{n}$ converges strongly to some function $u$ in $L^{1}(\Omega\setminus%
\Sigma )$. Moreover, for every $\varphi \in \mathcal{D}(\Omega \backslash
\Sigma )$ and $i=1,\ldots ,N$, we have that
\begin{equation*}
\int_{\Omega\setminus\Sigma }uD_{i}\varphi dx=\lim_{n\rightarrow \infty
}\int_{\Omega\setminus\Sigma }u_{n}D_{i}\varphi dx=\lim_{n\rightarrow \infty
}\int_{\Omega\setminus\Sigma }-\varphi D_{i}u_{n}\;dx=0.
\end{equation*}%
Therefore, $\int_{\Omega\setminus\Sigma }uD_{i}\varphi dx=0$ for all $%
\varphi \in \mathcal{D}(\Omega \backslash \Sigma )$ and $i=1,\ldots ,N$.
This implies that $\nabla u=0$ on $\Omega \backslash
\Sigma=\Omega_1\cup\Omega_2$. Hence, $\nabla u=0$ on $\Omega _{1}$ and $%
\nabla u=0$ on $\Omega _{2}$. Since $\Omega _{1}$ and $\Omega _{2}$ are
connected, we have that $u=C_{1}$ on $\Omega _{1}$ and $u=C_{2}$ on $\Omega
_{2}$ and $C_{1}|\Omega _{1}|+C_{2}|\Omega _{2}|=1$ (this last equality
follows from the fact that $\|u\|_{L^1(\Omega)}=\|u\|_{L^1(\Omega\setminus%
\Sigma)}=1$). Consequently, $u_{n}\rightarrow u$ strongly in $\widetilde
W^{1,1}(\Omega \backslash \Sigma )$ as $n\to\infty$. Since by assumption
there exists a trace operator $T:\widetilde W^{1,1}\left( \Omega \backslash
\Sigma \right) \rightarrow L^{1}\left( \Sigma ,d\mu _{\Sigma }\right) ,$ $%
Tu=u|_{\Sigma }$ which is linear and bounded, we have that $u_{n}\rightarrow
u$ strongly in $L^{1}(\Sigma ,\mu _{\Sigma })$ as $n\to\infty$, and the
uniqueness of the trace operator shows that $C_{1}=C_{2}=C$ on $\Sigma$.
Finally, we have that
\begin{equation*}
0=\lim_{n\to\infty}\int_{\Sigma }u_{n}d\mu _{\Sigma }= \int_{\Sigma }ud\mu
_{\Sigma }=C\mu _{\Sigma }(\Sigma )=\frac{\mu _{\Sigma }(\Sigma )}{|\Omega_1
|+|\Omega_2|}=\frac{\mu _{\Sigma }(\Sigma )}{|\Omega |}\neq 0.
\end{equation*}%
This is a contradiction and the proof is finished.
\end{proof}

We also need the following Poincar\'e type inequality.

\begin{lemma}
\label{poincare-forms}Assume that $\Omega \backslash \Sigma $ has the $%
\widetilde W^{1,2}$-extension property. Then, for every $\varepsilon \in
\left( 0,1\right) $ there exists $\zeta >0$ such that for all $U\in D({%
\mathcal{A}}_{\Theta ,{\Sigma }})$,
\begin{equation}
\left\Vert U\right\Vert _{\mathbb{X}^{2}\left( \Omega ,\Sigma \right)
}^{2}\leq \varepsilon {\mathcal{A}}_{\Theta ,{\Sigma }}\left( U,U\right)
+\varepsilon ^{-\zeta }\left\Vert U\right\Vert _{\mathbb{X}^{1}\left( \Omega
,\Sigma \right) }^{2}.  \label{PDBC}
\end{equation}
\end{lemma}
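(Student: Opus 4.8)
The plan is to reduce the product-space statement to a one-measure interpolation inequality and then combine a gain-of-integrability estimate for the form with a weighted Young inequality. First I would exploit the identification, already recorded in the excerpt, of $\mathbb{X}^{p}(\Omega,\Sigma)$ with $L^{p}(\Omega,\eta)$ for the single finite measure $\eta=\mathcal{H}^{N}\oplus\mu_{\Sigma}$, whose total mass is $\eta(\Omega)=|\Omega|+\mu_{\Sigma}(\Sigma)<\infty$. Under this identification the three quantities appearing in \eqref{PDBC} are literally the $L^{1}$, $L^{2}$ and $L^{2_\star}$ norms of the same function $U$ over $(\Omega,\eta)$, which is what makes the interpolation below legitimate despite the fact that the two components of $U$ live on different sets.

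Next I would record the key coercive gain-of-integrability bound, namely \eqref{sob-ul}: under the $\widetilde{W}^{1,2}$-extension hypothesis there is a constant $C>0$ with
\begin{equation*}
\|U\|_{\mathbb{X}^{2_\star}(\Omega,\Sigma)}^{2}\le C\,{\mathcal{A}}_{\Theta,{\Sigma}}(U,U),\qquad 2_\star=\tfrac{2d}{N-2}>2 ,
\end{equation*}
for all $U\in D({\mathcal{A}}_{\Theta,{\Sigma}})$. This is exactly the estimate obtained in the proof of Theorem \ref{theo-sg-dy}; it follows from the continuous embedding \eqref{cont-em} together with the coercivity of the form, which is supplied by the uniform ellipticity \eqref{D} of $\mathbf{D}$ and by $\beta\ge\beta_{0}>0$ in \eqref{beta}, so that ${\mathcal{A}}_{\Theta,{\Sigma}}(U,U)$ controls the full $\widetilde{W}^{1,2}(\Omega\setminus\Sigma)$-norm of $u$ rather than only the Dirichlet seminorm.

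With these two ingredients in hand I would interpolate the $L^{2}$ norm between $L^{1}$ and $L^{2_\star}$. Since $1<2<2_\star$ and $\eta$ is finite, the Lyapunov (Hölder) interpolation inequality gives $\|U\|_{\mathbb{X}^{2}}\le\|U\|_{\mathbb{X}^{1}}^{1-\theta}\,\|U\|_{\mathbb{X}^{2_\star}}^{\theta}$, where $\theta\in(0,1)$ is determined by $\tfrac12=(1-\theta)+\tfrac{\theta}{2_\star}$, i.e. $\theta=\tfrac{2_\star}{2(2_\star-1)}$. Squaring and inserting the bound above yields $\|U\|_{\mathbb{X}^{2}}^{2}\le\|U\|_{\mathbb{X}^{1}}^{2(1-\theta)}\bigl(C\,{\mathcal{A}}_{\Theta,{\Sigma}}(U,U)\bigr)^{\theta}$. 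I would then apply the weighted Young inequality with conjugate exponents $1/\theta$ and $1/(1-\theta)$, tuning the weight so that the resulting coefficient of ${\mathcal{A}}_{\Theta,{\Sigma}}(U,U)$ equals the prescribed $\varepsilon$; this produces a bound of the form $\varepsilon\,{\mathcal{A}}_{\Theta,{\Sigma}}(U,U)+C_{\theta}\,\varepsilon^{-\theta/(1-\theta)}\|U\|_{\mathbb{X}^{1}}^{2}$, with $\theta/(1-\theta)=2_\star/(2_\star-2)$.

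The only cleanup is to match the exact $\varepsilon^{-\zeta}$ form of \eqref{PDBC}: since the statement quantifies $\zeta$ after $\varepsilon$, for each fixed $\varepsilon\in(0,1)$ the multiplicative constant $C_{\theta}$ can be absorbed simply by enlarging the exponent beyond $\theta/(1-\theta)$, because $\varepsilon^{-\zeta}\to\infty$ as $\zeta\to\infty$ for $\varepsilon<1$. I expect the genuine substance of the argument, and hence the main point to verify carefully, to be the coercive estimate \eqref{sob-ul} rather than the interpolation bookkeeping: one must be sure the form dominates the \emph{whole} $\mathbb{X}^{2_\star}$ norm, which is precisely where the strict positivity of $\beta$ on $\Sigma$ (and the associated trace/Poincaré mechanism underlying Lemma \ref{poincare-weak}) enters; everything else is routine interpolation and Young's inequality.
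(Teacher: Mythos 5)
Your proof is correct, but it is not the paper's argument. The paper proves Lemma \ref{poincare-forms} by a normalization-plus-contradiction argument: since every term in \eqref{PDBC} is quadratic in $U$, it suffices to treat $\Vert U\Vert_{\mathbb{X}^{2}(\Omega,\Sigma)}=1$; assuming the inequality fails for a given $\varepsilon$, one extracts a sequence $U_{k}\in D(\mathcal{A}_{\Theta,\Sigma})$ with $\mathcal{A}_{\Theta,\Sigma}(U_{k},U_{k})\leq \varepsilon^{-1}$ and $\Vert U_{k}\Vert_{\mathbb{X}^{1}(\Omega,\Sigma)}^{2}\leq \varepsilon^{k}$ (see \eqref{INE}), then uses the compactness of the embeddings $D(\mathcal{A}_{\Theta,\Sigma})\hookrightarrow \mathbb{X}^{2}(\Omega,\Sigma)$ and $D(\mathcal{A}_{\Theta,\Sigma})\hookrightarrow \mathbb{X}^{1}(\Omega,\Sigma)$ (available under the extension hypothesis) to produce a strong limit $U$ with the contradictory properties $\Vert U\Vert_{\mathbb{X}^{2}(\Omega,\Sigma)}=1$ and $\Vert U\Vert_{\mathbb{X}^{1}(\Omega,\Sigma)}=0$. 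Your route --- the coercive Sobolev bound \eqref{sob-ul} from the proof of Theorem \ref{theo-sg-dy}, Lyapunov interpolation of $L^{2}$ between $L^{1}$ and $L^{2_{\star}}$ over the single measure $\eta$, then weighted Young --- is legitimate and non-circular, since \eqref{sob-ul} is established in Section \ref{prelim}, before this lemma, under exactly the same hypothesis. What your approach buys: it is constructive and produces an explicit admissible exponent, namely $\zeta$ essentially equal to $\theta/(1-\theta)=2_{\star}/(2_{\star}-2)=d/(d-N+2)$, i.e.\ half the ultracontractivity exponent $\gamma=2d/(d-N+2)$ of Theorem \ref{theo-sg-dy}, after absorbing the multiplicative constant into $\varepsilon^{-\zeta}$ as you indicate (valid because $\varepsilon<1$ and $\zeta$ is allowed to depend on $\varepsilon$). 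What the paper's proof buys: it is shorter and needs only compactness of the embeddings, never the precise value of $2_{\star}$; the price is that it is non-constructive and gives no information about $\zeta$. Both arguments ultimately rest on the two ingredients you single out: the embedding \eqref{cont-em} (hence the $\widetilde{W}^{1,2}$-extension property), and the fact that \eqref{beta} makes $\mathcal{A}_{\Theta,\Sigma}(U,U)$ control the full $\widetilde{W}^{1,2}(\Omega\setminus\Sigma)$-norm rather than just the gradient seminorm --- so your closing caveat about where the real substance lies is well placed.
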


\begin{proof}
First, we observe that%
\begin{equation*}
\left( \mathcal{A}_{\Theta ,{\Sigma }}\left( U,U\right) \right)
^{1/2}+\left\Vert U\right\Vert _{\mathbb{X}^{1}\left( \Omega ,\Sigma \right)
}
\end{equation*}%
defines an equivalent norm on $D(\mathcal{A}_{\Theta ,{\Sigma }})\cap\mathbb{%
X}^1(\Omega,\Sigma)=D(\mathcal{A}_{\Theta ,{\Sigma }})$. Second, by dividing %
\eqref{PDBC} by $\left\Vert U\right\Vert _{\mathbb{X}^{2}\left( \Omega
,\Sigma \right) }^{2}$ if necessary, it suffices to prove \eqref{PDBC} for $%
\Vert U\Vert _{\mathbb{X}^{2}\left( \Omega ,\Sigma \right) }=1$. Suppose
that there is no $\zeta >0$ such that \eqref{PDBC} holds for a given $%
\varepsilon \in (0,1)$. Then for every $k\in \mathbb{N}$ there is a sequence
$U_{k}\in D(\mathcal{A}_{\Theta ,{\Sigma }})$ such that%
\begin{equation}  \label{INE}
\Vert U_{k}\Vert _{\mathbb{X}^{2}\left( \Omega ,\Sigma \right) }^{2}=1\geq
\varepsilon \mathcal{A}_{\Theta ,{\Sigma }}\left( U_{k},U_{k}\right)
+\varepsilon ^{-\zeta }\left\Vert U_{k}\right\Vert _{\mathbb{X}^{1}\left(
\Omega ,\Sigma \right) }^{2}.
\end{equation}%
The inequality \eqref{INE} implies that the resulting sequence $(U_{k})$ is
bounded in $D(\mathcal{A}_{\Theta ,{\Sigma }})$. Hence, after a subsequence
if necessary, we have that $(U_k)$ converges weakly to some $U\in D(\mathcal{%
A}_{\Theta ,{\Sigma }}$. Since the embeddings $D(\mathcal{A}_{\Theta ,{%
\Sigma }})\hookrightarrow\mathbb{X}^{2}(\Omega ,\Sigma )$ and $D(\mathcal{A}%
_{\Theta ,{\Sigma }})\hookrightarrow\mathbb{X}^{1}\left( \Omega ,\Sigma
\right) $ are compact, we find a subsequence, again denoted by $(U_{k})$,
that converges strongly in $\mathbb{X}^{2}(\Omega ,\Sigma )$ and in $\mathbb{%
X}^{1}(\Omega ,\Sigma )$ to the function $U\in D(\mathcal{A}_{\Theta ,{%
\Sigma }})$. By assumption we have $\Vert U\Vert _{\mathbb{X}^{2}\left(
\Omega ,\Sigma \right) }=1$. On the other hand, \eqref{INE} shows that $%
\Vert U_{k}\Vert _{\mathbb{X}^{1}\left( \Omega ,\Sigma \right) }^{2}\leq
\varepsilon ^{k}$ for all $k$. Therefore $\Vert U\Vert _{\mathbb{X}%
^{1}\left( \Omega ,\Sigma \right) }=0$ and thus $u=0$ a.e. in $%
\Omega\setminus\Sigma $ and $u|_{\Sigma }=0,$ $\mu _{\Sigma }$ a.e. on $%
\Sigma $. This is a contradiction which altogether completes the proof of
the lemma.
\end{proof}

First, we have the local existence result.

\begin{theorem}
\label{SG}Assume that $f$, $h\in C_{loc}^1({\mathbb{R}})$ and that $\Omega
\backslash \Sigma $ has the $\widetilde{W}^{1,2}$-extension property. Then
for every $\left( u_{0},v_{0}\right) \in \mathbb{X}^{\infty }(\Omega ,\Sigma
),$ there exists a unique strong solution $U$ of \eqref{p1b}-\eqref{p4b} on $%
\left( 0,T_{\max }\right) $ in the sense of Definition \ref{solu}, for some $%
T_{\max }>0$. Moreover, if $T_{\max }<\infty $, then%
\begin{equation*}
\lim_{t\uparrow T_{\max }}\Vert U(t)\Vert _{\mathbb{X}^{\infty }(\Omega
,\Sigma )}=\infty .
\end{equation*}
\end{theorem}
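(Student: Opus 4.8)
The plan is to recast the transmission problem \eqref{p1b}-\eqref{p4b} as an abstract semilinear Cauchy problem on the Hilbert space $H:=\mathbb{X}^{2}(\Omega ,\Sigma )$ and then apply the classical local existence theory for semilinear equations governed by the generator $-A_{\Theta ,\Sigma }$ together with the smoothing estimates proved in Theorem \ref{theo-sg-dy}. Write $U=(u,u|_{\Sigma })$ and define the Nemytskii-type nonlinearity $\mathcal{N}(U):=\bigl(f(u),-h(u|_{\Sigma })\bigr)$, so that \eqref{p1b}-\eqref{p4b} becomes
\begin{equation*}
U^{\prime }(t)+A_{\Theta ,\Sigma }U(t)+\mathcal{N}(U(t))=0,\quad U(0)=(u_{0},v_{0}).
\end{equation*}
By Theorem \ref{theo-sg-dy}(a), $-A_{\Theta ,\Sigma }$ generates a strongly continuous (indeed analytic) semigroup on $H$, and by part (d) the fractional-power domain $D(A_{\Theta ,\Sigma }^{\theta })$ embeds continuously into $\mathbb{X}^{\infty }(\Omega ,\Sigma )$ as soon as $\theta >\gamma /4$. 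First I would fix such a $\theta \in (\gamma /4,1)$ and work in the intermediate space $\mathbb{X}^{\theta }:=D(A_{\Theta ,\Sigma }^{\theta })$, whose norm controls the $L^{\infty }$-norm and hence lets me exploit the hypothesis $f,h\in C^{1}_{loc}(\mathbb{R})$.

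The key steps, in order, are the following. First, establish that $\mathcal{N}$ is locally Lipschitz from $\mathbb{X}^{\infty }(\Omega ,\Sigma )$ into $H$: since $f,h\in C^{1}_{loc}$, on any ball $\{\Vert U\Vert _{\mathbb{X}^{\infty }}\leq R\}$ the functions $f,h$ and their derivatives are bounded, so $\Vert \mathcal{N}(U_{1})-\mathcal{N}(U_{2})\Vert _{H}\leq L(R)\Vert U_{1}-U_{2}\Vert _{\mathbb{X}^{\infty }}$; composing with the embedding $\mathbb{X}^{\theta }\hookrightarrow \mathbb{X}^{\infty }$ from Theorem \ref{theo-sg-dy}(d) makes $\mathcal{N}:\mathbb{X}^{\theta }\to H$ locally Lipschitz. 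Second, set up the mild-solution fixed point: on a small interval $[0,T]$ define the map
\begin{equation*}
(\Phi U)(t):=e^{-tA_{\Theta ,\Sigma }}(u_{0},v_{0})-\int_{0}^{t}e^{-(t-s)A_{\Theta ,\Sigma }}\mathcal{N}(U(s))\,ds
\end{equation*}
on a closed ball of $C([0,T];\mathbb{X}^{\infty })$ (or of the weighted space $\{t^{\theta }U\in C([0,T];\mathbb{X}^{\theta })\}$), and use the analytic smoothing bound $\Vert A_{\Theta ,\Sigma }^{\theta }e^{-tA_{\Theta ,\Sigma }}\Vert \lesssim t^{-\theta }$, which is integrable since $\theta <1$, to show $\Phi $ is a contraction for $T$ small. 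This yields a unique local mild solution $U\in C([0,T_{\max });\mathbb{X}^{\infty }(\Omega ,\Sigma ))$ with $U(t)\in D(A_{\Theta ,\Sigma })$ for $t>0$. Third, promote the mild solution to a strong solution in the sense of Definition \ref{solu}: because the semigroup is analytic, parabolic regularity gives $U\in C_{\mathrm{loc}}((0,T_{\max });D(\mathcal{A}_{\Theta ,\Sigma }))\cap W^{1,\infty }_{\mathrm{loc}}((0,T_{\max }];\mathbb{X}^{2})$, matching \eqref{reg_weak}, and testing the equation against $\xi \in D(\mathcal{A}_{\Theta ,\Sigma })$ reproduces the variational identity \eqref{de_form} via the definition \eqref{op-form-dy} of $A_{\Theta ,\Sigma }$ as the operator associated with the form. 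Finally, for the blow-up alternative I would argue by contradiction: if $T_{\max }<\infty $ but $\limsup _{t\uparrow T_{\max }}\Vert U(t)\Vert _{\mathbb{X}^{\infty }}<\infty $, then the local existence time $T=T(R)$ depends only on the size $R$ of the $\mathbb{X}^{\infty }$-data, so the solution can be restarted from times arbitrarily close to $T_{\max }$ and continued beyond $T_{\max }$, contradicting maximality; hence $\Vert U(t)\Vert _{\mathbb{X}^{\infty }}\to \infty $ as $t\uparrow T_{\max }$.

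I expect the main obstacle to be the interplay between the two natural function spaces. The nonlinearity $\mathcal{N}$ is only controlled in the sup-norm topology (because $f,h$ are merely $C^{1}_{loc}$ with no growth restriction here), so the fixed-point argument must be run at the $\mathbb{X}^{\infty }$-level, yet the semigroup estimates and the fractional-power machinery live naturally on the Hilbert scale $D(A_{\Theta ,\Sigma }^{\theta })$. Reconciling these requires the precise embedding threshold $\theta >\gamma /4$ of Theorem \ref{theo-sg-dy}(d), and care that the convolution term $\int _{0}^{t}A_{\Theta ,\Sigma }^{\theta }e^{-(t-s)A_{\Theta ,\Sigma }}\mathcal{N}(U(s))\,ds$ lands back in $\mathbb{X}^{\infty }$ with an integrable singularity $(t-s)^{-\theta }$; choosing $\theta $ strictly between $\gamma /4$ and $1$ is exactly what makes both requirements compatible. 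A secondary subtlety is that the initial datum $(u_{0},v_{0})$ lies only in $\mathbb{X}^{\infty }$, not in $D(A_{\Theta ,\Sigma })$, so strong regularity can be claimed only for $t>0$, which is why Definition \ref{solu} asks for $U(t)\in D(A_{\Theta ,\Sigma })$ a.e. and for $C_{\mathrm{loc}}((0,T];\cdot )$ regularity away from $t=0$, together with mere $\mathbb{X}^{\infty }$-continuity up to $t=0$; the analyticity of the semigroup is what supplies this instantaneous smoothing.
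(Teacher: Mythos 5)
Your overall architecture (abstract Cauchy problem, Duhamel fixed point in the sup-norm topology, continuation for the blow-up alternative) matches the paper's, but the mechanism you use to close the fixed point contains a genuine gap. You estimate the Duhamel term by pushing $\mathcal{N}(U(s))\in\mathbb{X}^{2}(\Omega,\Sigma)$ back into $\mathbb{X}^{\infty}(\Omega,\Sigma)$ through the fractional-power embedding $D(A_{\Theta,\Sigma}^{\theta})\hookrightarrow\mathbb{X}^{\infty}(\Omega,\Sigma)$, which by Theorem \ref{theo-sg-dy}(d) requires $\theta>\gamma/4$ with $\gamma=\frac{2d}{d-N+2}$, while integrability of the singularity $(t-s)^{-\theta}$ requires $\theta<1$. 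Such a $\theta$ exists only when $\gamma<4$, i.e. $d>2N-4$. Since $d\in(N-2,N)\cap(0,N)$, this fails for \emph{every} admissible $d$ as soon as $N\geq 4$, and for $N=3$ it fails whenever $d\leq 2$ — in particular for the classical Lipschitz interface $d=N-1=2$ (the paper's remark after Theorem \ref{theo-sg-dy} makes this explicit: in the Lipschitz case one needs $2\theta+1>N$). The same obstruction hits the alternative of using the ultracontractive bound \eqref{uultra} directly, which produces the kernel $(t-s)^{-\gamma/4}$. Moreover, since $f,h$ are only $C^{1}_{loc}$ with no growth restriction, you cannot retreat to an $L^{p}$-based fixed point; the argument must close in $\mathbb{X}^{\infty}$, so the exponent constraint cannot be bootstrapped away.

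The paper's proof avoids fractional powers and smoothing entirely: because $\mathcal{A}_{\Theta,\Sigma}$ is a Dirichlet form, the semigroup is submarkovian, hence \emph{non-expansive on} $\mathbb{X}^{\infty}(\Omega,\Sigma)$ (estimate \eqref{nonexp}). The Duhamel integrand is then bounded by $\Vert F(V(\tau))\Vert_{\mathbb{X}^{\infty}(\Omega,\Sigma)}$ with no singular kernel at all, and the self-map and contraction estimates follow with a plain factor of $t$, uniformly in $N$ and $d$. This is the key idea your proposal is missing. A secondary, more repairable point: to promote the mild solution to a strong one the paper does not invoke analytic-semigroup parabolic regularity in $\mathbb{X}^{\infty}$ (where the semigroup is not even strongly continuous), but uses the fact that $A_{\Theta,\Sigma}$ is the subdifferential of the convex, lower-semicontinuous functional $U\mapsto\frac12\mathcal{A}_{\Theta,\Sigma}(U,U)$ on $\mathbb{X}^{2}(\Omega,\Sigma)$ and applies Theorems \ref{m11} and \ref{ap_reg_thm}; your regularity step should be rerouted through this (or through analyticity on $\mathbb{X}^{2}$), once the local solution has been constructed by the contraction argument above.
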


\begin{proof}
Let $\left( u_{0},v_{0}\right) \in \mathbb{X}^{\infty }(\Omega ,\Sigma
)\subset \mathbb{X}^{2}(\Omega ,\Sigma )=\overline{D({\mathcal{A}}_{\Theta
,\Sigma })}^{\mathbb{X}^{2}(\Omega ,\Sigma )}$. From Theorem \ref{theo-sg-dy}%
\ we know that $-{A}_{\Theta ,{\Sigma }}$ generates a submarkovian (linear)
semigroup $(e^{-t{A}_{\Theta ,{\Sigma }}})_{t\geq 0}$ on $\mathbb{X}%
^{2}(\Omega ,\Sigma )$. Hence, the operator $e^{-t{A}_{\Theta ,{\Sigma }}}$
is non-expansive on $\mathbb{X}^{\infty }(\Omega ,\Sigma )$, that is,
\begin{equation}
||e^{-tA_{\Theta ,{\Sigma }}}U_{0}||_{\mathbb{X}^{\infty }(\Omega ,\Sigma
)}\leq \left\Vert U_{0}\right\Vert _{\mathbb{X}^{\infty }(\Omega ,\Sigma
)},\;\;\forall \;\text{ }t\geq 0\text{ and }U_{0}=\left( u_{0},v_{0}\right)
\in \mathbb{X}^{\infty }(\Omega ,\Sigma ).  \label{nonexp}
\end{equation}%
In addition, we have that ${A}_{\Theta ,{\Sigma }}$ is strongly accretive on
$\mathbb{X}^{2}(\Omega ,\Sigma )$. That is, ${\mathcal{A}}_{\Theta ,{\Sigma }%
}(U,U)\geq C\left\Vert U\right\Vert _{\mathbb{X}^{2}(\Omega ,\Sigma )}^{2}$,
for some $C>0$ and for every $U\in D({\mathcal{A}}_{\Theta ,{\Sigma }})$,
where we have used \eqref{beta}. Thus, the operator version of problem %
\eqref{p1b}-\eqref{p4b} reads
\begin{equation}
\partial _{t}U=-{A}_{\Theta ,{\Sigma }}U-F\left( U\right) ,\text{ }U\overset{%
\text{def}}{=}\left( u,u|_{\Sigma }\right) ,\text{ }U(0)=\left(
u_{0},v_{0}\right) ,  \label{op_v}
\end{equation}%
where we have set%
\begin{equation*}
F\left( U\right) =\left( f\left( u\right) ,-h\left( u\right) |_{\Sigma
}\right) .
\end{equation*}%
We construct the (locally-defined) strong solution by a fixed point
argument. To this end, fix $0<T^{\ast }\leq T$, $\Vert U_{0}\Vert _{\mathbb{X%
}^{\infty }(\Omega ,\Sigma )}\leq R^{\star }$, consider the space
\begin{equation*}
\mathcal{X}_{T^{\ast },R^{\ast }}\equiv \left\{ V\in C\left( \left[
0,T^{\ast }\right] ;\mathbb{X}^{\infty }(\Omega ,\Sigma )\right) :\left\Vert
V\left( t\right) \right\Vert _{\mathbb{X}^{\infty }(\Omega ,\Sigma )}\leq
R^{\ast },\;\;V(0)=U_{0}:=(u_{0},v_{0})\right\}
\end{equation*}%
and define the following mapping%
\begin{equation}
S\left( V\right) \left( t\right) =e^{-t{A}_{\Theta ,{\Sigma }%
}}U_{0}-\int_{0}^{t}e^{-\left( t-s\right) {A}_{\Theta ,{\Sigma }}}F\left(
V\left( s\right) \right) ds,\text{ }t\in \left[ 0,T^{\ast }\right] .
\label{mapping}
\end{equation}%
We mention that the space $\mathcal{X}_{T^{\ast },R^{\ast }}$ is not empty,
since it contains at least the function $U_{0}$. We notice that $\mathcal{X}%
_{T^{\ast },R^{\ast }}$, when endowed with the norm of $C\left( \left[
0,T^{\ast }\right] ;\mathbb{X}^{\infty }(\Omega ,\Sigma )\right) $, is a
closed subset of the space $C\left( \left[ 0,T^{\ast }\right] ;\mathbb{X}%
^{\infty }\left( \Omega ,\Sigma \right) \right) $, and since $f,h$ are
locally Lipschitz we have that $S\left( V\right) \left( t\right) $ is
continuous on $\left[ 0,T^{\ast }\right] $. We will show that, by properly
choosing $T^{\ast },R^{\ast }>0$, we get that $S:\mathcal{X}_{T^{\ast
},R^{\ast }}\rightarrow \mathcal{X}_{T^{\ast },R^{\ast }}$ is a contraction
mapping with respect to the metric induced by the norm of $C\left( \left[
0,T^{\ast }\right] ;\mathbb{X}^{\infty }(\Omega ,\Sigma )\right) .$ The
appropriate choices for $T^{\ast },R^{\ast }>0$ will be specified below.
First, we show that if $V\in \mathcal{X}_{T^{\ast },R^{\ast }}$ then $%
S\left( V\right) \in \mathcal{X}_{T^{\ast },R^{\ast }}$, that is, $S$ maps $%
\mathcal{X}_{T^{\ast },R^{\ast }}$ to itself. From (\ref{nonexp}), the fact
that $F\in C_{\text{loc}}^{1}\left( \mathbb{R},\mathbb{R}^{2}\right) $
together with the fact that $F(x)-F(y)=F^{\prime }(\xi )(x-y)$ for some $\xi
$ on the line segment from $x$ to $y$ (by the mean value theorem), we
observe that the mapping $S$ satisfies the following estimate%
\begin{align*}
\left\Vert S\left( V\left( t\right) \right) \right\Vert _{\mathbb{X}^{\infty
}(\Omega ,\Sigma )}& \leq \left\Vert U_{0}\right\Vert _{\mathbb{X}^{\infty
}(\Omega ,\Sigma )}+\int_{0}^{t}\left\Vert e^{-\left( t-\tau \right) {A}%
_{\Theta ,\Sigma }}\left( F\left( 0\right) +\left( F\left( V\left( \tau
\right) \right) -F\left( 0\right) \right) \right) \right\Vert _{\mathbb{X}%
^{\infty }(\Omega ,\Sigma )}d\tau \\
& \leq \left\Vert U_{0}\right\Vert _{\mathbb{X}^{\infty }(\Omega ,\Sigma
)}+t\left( \left\vert F\left( 0\right) \right\vert +Q_{f,h}\left( R^{\ast
}\right) R^{\ast }\right) ,
\end{align*}%
for some positive continuous function $Q_{f,h}$ which depends only on the
size of the nonlinearities $f^{^{\prime }},h^{^{\prime }}$ and by $|F(0)|$
we mean $|F(0)|=|f(0)|+|h(0)|$. Thus, provided that we set $R^{\ast }\geq
2\left\Vert U_{0}\right\Vert _{\mathbb{X}^{\infty }(\Omega ,\Sigma )}$, we
can find a sufficiently small time $T^{\ast }>0$ such that
\begin{equation}
2T^{\ast }\left( \left\vert F\left( 0\right) \right\vert +Q_{f,h}\left(
R^{\ast }\right) R^{\ast }\right) \leq R^{\ast }.  \label{3.21time}
\end{equation}%
Since $S(V)(0)=U_{0}$ (by \eqref{mapping}), we have shown that $S\left(
V\right) \in \mathcal{X}_{T^{\ast },R^{\ast }}$, for any $V\in \mathcal{X}%
_{T^{\ast },R^{\ast }}.$ Next, we show that by possibly choosing $T^{\ast
}>0 $ smaller, $S:\mathcal{X}_{T^{\ast },R^{\ast }}\rightarrow \mathcal{X}%
_{T^{\ast },R^{\ast }}$ is also a contraction. Indeed, for any $%
V_{1},V_{2}\in \mathcal{X}_{T^{\ast },R^{\ast }}$, exploiting again (\ref%
{nonexp}), we estimate%
\begin{align}
\left\Vert S\left( V_{1}\left( t\right) \right) -S\left( V_{2}\left(
t\right) \right) \right\Vert _{\mathbb{X}^{\infty }(\Omega ,\Sigma )}& \leq
Q_{f,h}\left( R^{\ast }\right) \int_{0}^{t}\left\Vert e^{-\left( t-\tau
\right) {A}_{\Theta ,\Sigma }}(V_{1}\left( \tau \right) -V_{2})\left( \tau
\right) \right\Vert _{\mathbb{X}^{\infty }(\Omega ,\Sigma )}d\tau
\label{3.21} \\
& \leq tQ_{f,h}\left( R^{\ast }\right) \left\Vert V_{1}-V_{2}\right\Vert
_{C\left( \left[ 0,T^{\ast }\right] ;\mathbb{X}^{\infty }(\Omega ,\Sigma
)\right) }.  \notag
\end{align}%
This shows that $S$ is a contraction on $\mathcal{X}_{T^{\ast },R^{\ast }}$
(compare with \eqref{3.21time}) provided that we choose a time $T^{\ast }>0$
which satisfies (\ref{3.21time}) and $T^{\ast }Q_{f,h}\left( R^{\ast
}\right) <1$. Therefore, owing to the contraction mapping principle, we
conclude that problem (\ref{op_v}) has a unique local solution $U=\left(
u,u|_{\Sigma }\right) \in \mathcal{X}_{T^{\ast },R^{\ast }}$. Using
semigroup properties, we get that this solution can certainly be (uniquely)
extended on a right maximal time interval $[0,T_{\max })$, with $T_{\max }>0$
depending on $\left\Vert U_{0}\right\Vert _{\mathbb{X}^{\infty }(\Omega
,\Sigma )},$ such that, either $T_{\max }=\infty $ or $T_{\max }<\infty $,
in which case $\lim_{t\uparrow T_{\max }}\left\Vert U\left( t\right)
\right\Vert _{\mathbb{X}^{\infty }(\Omega ,\Sigma )}=\infty .$ Indeed, if $%
T_{\max }<\infty $ and the latter condition does not hold, we can find a
sequence $t_{n}\uparrow T_{\max }$ as $n\rightarrow \infty $ such that $%
\left\Vert U\left( t_{n}\right) \right\Vert _{\mathbb{X}^{\infty }(\Omega
,\Sigma )}\leq C$ for all $n\in \mathbb{N}$. This would allow us to extend $%
U $ as a solution to equation (\ref{op_v}) to an interval $[0,t_{n}+\delta ),
$ for some $\delta >0$ independent of $n$. Hence $U$ can be extended beyond $%
T_{\max }$ which contradicts the construction of $T_{\max }>0$. To conclude
that the solution $U$ belongs to the class in Definition \ref{solu}, let us
further set $\mathcal{G}\left( t\right) :=-F\left( U\left( t\right) \right)
, $ for $U\in C\left( [0,T_{\max }\right) ;\mathbb{X}^{\infty }(\Omega
,\Sigma ))$ and notice that $U$ is the "generalized" solution of%
\begin{equation}
\partial _{t}U+{A}_{\Theta ,\Sigma }U=\mathcal{G}\left( t\right) ,\text{ }%
t\in \lbrack 0,T_{\max }),  \label{3.44}
\end{equation}%
such that $U\left( 0\right) =U_{0}\in \mathbb{X}^{\infty }(\Omega ,\Sigma
)\subset \mathbb{X}^{2}(\Omega ,\Sigma )=\overline{D({A}_{\Theta ,\Sigma })}%
. $ By Theorem \ref{m11}, the "generalized" solution $U$ has the additional
regularity $\partial _{t}U\in L^{2}\left( (\tau ,T_{\max });\mathbb{X}%
^{2}(\Omega ,\Sigma )\right) ,$ and since $U$ is continuous on $[0,T_{\max
}) $ with values in $\mathbb{X}^{\infty }(\Omega ,\Sigma )$ and $f,h\in C_{%
\text{loc}}^{1}\left( \mathbb{R}\right) $, there readily holds
\begin{equation}
\mathcal{G}\in W^{1,2}\left( (\tau ,T_{\max });\mathbb{X}^{2}(\Omega ,\Sigma
)\right) \cap L^{\infty }\left( (\tau ,T_{\max });\mathbb{X}^{\infty
}(\Omega ,\Sigma )\right) ,  \label{3.45}
\end{equation}%
owing to the fact that $\partial _{t}\mathcal{G}=(f^{^{\prime }}\left(
u\right) \partial _{t}u,-(h^{^{\prime }}\left( u\right) \partial
_{t}u)|_{\Sigma })$ a.e. on $[\tau ,T_{\max })$. Thus, we can apply Theorem %
\ref{ap_reg_thm} to deduce%
\begin{equation}
U\in L^{\infty }((\tau ,T_{\max });D({A}_{\Theta ,\Sigma }))\cap W^{1,\infty
}\left( (\tau ,T_{\max });\mathbb{X}^{2}(\Omega ,\Sigma )\right) ,
\label{3.46}
\end{equation}%
such that the solution $U$ is Lipschitz continuous on $[\tau ,T_{\max })$,
for every $\tau >0.$ Thus, we have obtained a locally-defined strong
solution in the sense of Definition \ref{solu}. Multiplying \eqref{p1b} by a
test function $\xi =(\xi ,\xi |_{\Sigma })\in D(\mathcal{A}_{\Theta ,\Sigma
})$, using \eqref{p3b} and Proposition \ref{op-dyn} we get the variational
equality in \eqref{de_form} and we note that this identity is satisfied
pointwise (in time $t\in \left( 0,T_{\max }\right) $) by the local strong
solution. The proof is finished.
\end{proof}

Every locally-defined \emph{bounded} solution of problem \eqref{p1b}-%
\eqref{p4b} remains bounded for all times provided that the following holds.

\begin{theorem}
\label{global-dyn}Let the assumptions of Theorem \ref{SG} and Lemma \ref%
{poincare-weak} be satisfied. Assume that there exists $\tau _{0}>0$, such
that for any $m\geq 1$ and $\left\vert \tau \right\vert \geq \tau _{0},$ it
holds%
\begin{align}
& -f\left( \tau \right) \left\vert \tau \right\vert ^{m-1}\tau +\frac{\mu
_{\Sigma }\left( \Sigma \right) }{\left\vert \Omega \right\vert }h\left(
\tau \right) \left\vert \tau \right\vert ^{m-1}\tau +\frac{\left( C_{\Omega
,\Sigma }^{\ast }\right) ^{2}}{4m\varepsilon }\left\vert \tau \right\vert
^{m-1}\left( h^{^{\prime }}\left( \tau \right) \tau +mh\left( \tau \right)
\right) ^{2}  \label{balance} \\
& \leq L_{\lambda }\left( m\right) (\left\vert \tau \right\vert ^{m+1}+1),
\notag
\end{align}%
for some $\varepsilon \in (0,d_{0}),$ and some positive function $L_{\lambda
}:\mathbb{R}_{+}\rightarrow \mathbb{R}_{+},$ $L_{\lambda }\left( m\right)
\sim cm^{\lambda }$, for some constants $\lambda ,c>0$, as $m\rightarrow
\infty .$ Here
\begin{equation}
C_{\Omega ,\Sigma }^{\ast }=C_{\Omega ,\Sigma }\frac{\mu _{\Sigma }\left(
\Sigma \right) }{\left\vert \Omega \right\vert }  \label{P-C}
\end{equation}%
and $C_{\Omega ,\Sigma }>0$ is the Poincar\'{e} constant in Lemma \ref%
{poincare-weak} and $d_{0}$ is the constant in (\ref{D}). Then the solution
of problem \eqref{p1b}-\eqref{p4b} is global.
\end{theorem}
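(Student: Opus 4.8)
The plan is to derive an a priori bound on $\|U(t)\|_{\mathbb{X}^{\infty}(\Omega,\Sigma)}$ that stays finite on every finite time interval; by the blow-up alternative established in Theorem \ref{SG} (if $T_{\max}<\infty$ then $\lim_{t\uparrow T_{\max}}\|U(t)\|_{\mathbb{X}^{\infty}(\Omega,\Sigma)}=\infty$), such a bound forces $T_{\max}=\infty$. The mechanism is a Moser--Alikakos iteration run on the variational identity \eqref{de_form}, in which the balance condition \eqref{balance} supplies exactly the pointwise estimate needed to close each level.

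First I would test \eqref{de_form} with $\Xi=(|u|^{m-1}u,(|u|^{m-1}u)|_{\Sigma})$. This is an admissible element of $D(\mathcal{A}_{\Theta,\Sigma})$ because $u(t)\in\mathbb{X}^{\infty}(\Omega,\Sigma)$ and $s\mapsto|s|^{m-1}s$ is $C^{1}$ with derivative $m|s|^{m-1}$ bounded on the range of $u$; a standard truncation/approximation argument justifies the membership together with the chain rule $\tfrac{1}{m+1}\tfrac{d}{dt}\int|u|^{m+1}=\int|u|^{m-1}u\,\partial_{t}u$ on both $\Omega\setminus\Sigma$ and $\Sigma$. In $\mathcal{A}_{\Theta,\Sigma}(U,\Xi)$ the diffusion term equals $m\int_{\Omega\setminus\Sigma}|u|^{m-1}\mathbf{D}\nabla u\cdot\nabla u\,dx\ge md_{0}\int_{\Omega\setminus\Sigma}|u|^{m-1}|\nabla u|^{2}dx$ by \eqref{D}, while the $\beta$-term and the nonlocal term are nonnegative (the latter since $s\mapsto|s|^{m-1}s$ is nondecreasing, so the integrand $(u(x)-u(y))(|u(x)|^{m-1}u(x)-|u(y)|^{m-1}u(y))\ge0$). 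Writing $y_{m}:=\|u\|_{L^{m+1}(\Omega\setminus\Sigma)}^{m+1}+\|u\|_{L^{m+1}(\Sigma,\mu_{\Sigma})}^{m+1}$ and discarding the nonnegative interface contributions, we obtain
\begin{equation*}
\frac{1}{m+1}\frac{d}{dt}y_{m}+md_{0}\int_{\Omega\setminus\Sigma}|u|^{m-1}|\nabla u|^{2}dx\le\int_{\Sigma}h(u)|u|^{m-1}u\,d\mu_{\Sigma}-\int_{\Omega\setminus\Sigma}f(u)|u|^{m-1}u\,dx.
\end{equation*}

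The crux is to transfer the bad interface term onto $\Omega\setminus\Sigma$. With $G_{m}(\tau):=h(\tau)|\tau|^{m-1}\tau$, applying Lemma \ref{poincare-weak} to $G_{m}(u)\in\widetilde{W}^{1,1}(\Omega\setminus\Sigma)$ and using $|\Omega\setminus\Sigma|=|\Omega|$ yields
\begin{equation*}
\int_{\Sigma}G_{m}(u)\,d\mu_{\Sigma}\le\frac{\mu_{\Sigma}(\Sigma)}{|\Omega|}\int_{\Omega\setminus\Sigma}G_{m}(u)\,dx+C_{\Omega,\Sigma}^{\ast}\int_{\Omega\setminus\Sigma}|\nabla G_{m}(u)|\,dx,
\end{equation*}
with $C_{\Omega,\Sigma}^{\ast}$ as in \eqref{P-C}. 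Since $\nabla G_{m}(u)=|u|^{m-1}(h'(u)u+mh(u))\nabla u$, Young's inequality applied with $a=|u|^{(m-1)/2}|\nabla u|$ and $b=C_{\Omega,\Sigma}^{\ast}|u|^{(m-1)/2}|h'(u)u+mh(u)|$ in the form $ab\le\varepsilon m\,a^{2}+\tfrac{1}{4\varepsilon m}b^{2}$ splits this gradient contribution into $\varepsilon m\int_{\Omega\setminus\Sigma}|u|^{m-1}|\nabla u|^{2}dx$, which is absorbed by the diffusion term because $\varepsilon<d_{0}$, plus a remainder $\tfrac{(C_{\Omega,\Sigma}^{\ast})^{2}}{4\varepsilon m}\int_{\Omega\setminus\Sigma}|u|^{m-1}(h'(u)u+mh(u))^{2}dx$. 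Collecting everything, the right-hand side becomes $\int_{\Omega\setminus\Sigma}\mathcal{B}_{m}(u)\,dx$, where $\mathcal{B}_{m}(\tau)$ is precisely the left-hand side of \eqref{balance}; bounding it by $L_{\lambda}(m)(|\tau|^{m+1}+1)$ for $|\tau|\ge\tau_{0}$ and by a constant $M(m)$ (depending on $m,\tau_{0}$ and the local $C^{1}$-norms of $f,h$) for $|\tau|<\tau_{0}$, I arrive at
\begin{equation*}
\frac{1}{m+1}\frac{d}{dt}y_{m}+m(d_{0}-\varepsilon)\int_{\Omega\setminus\Sigma}|u|^{m-1}|\nabla u|^{2}dx\le L_{\lambda}(m)\big(y_{m}+|\Omega|\big)+M(m)|\Omega|.
\end{equation*}

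Finally I would run the iteration. Setting $w:=|u|^{(m-1)/2}u$ gives $\int_{\Omega\setminus\Sigma}|u|^{m-1}|\nabla u|^{2}dx=\tfrac{4}{(m+1)^{2}}\|\nabla w\|_{L^{2}(\Omega\setminus\Sigma)}^{2}$ and $\|w\|_{L^{2}}^{2}=\|u\|_{L^{m+1}}^{m+1}$, so multiplying through by $m+1$ leaves a gradient coefficient of order one, and the retained gradient term, combined with the Sobolev and trace embeddings $\widetilde{W}^{1,2}(\Omega\setminus\Sigma)\hookrightarrow L^{2^{\star}}(\Omega\setminus\Sigma)\cap L^{2_{\star}}(\Sigma,\mu_{\Sigma})$ from \eqref{sob-meas} and \eqref{cont-em}, upgrades the $\mathbb{X}^{m+1}$-information to $\mathbb{X}^{r(m+1)}$ with $r:=\tfrac{d}{N-2}>1$ (here $d\in(N-2,N)$ is essential, since it makes $2_{\star}>2$). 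Choosing the geometric sequence $p_{k}:=r^{k}p_{0}$ with base $p_{0}=2$ controlled by the energy estimate (the case $m=1$ integrated in time via Gronwall), and iterating the resulting recursion, the hypothesis $L_{\lambda}(m)\sim cm^{\lambda}$ makes the accumulated logarithmic constants summable, $\sum_{k}p_{k}^{-1}\log\big(\mathrm{poly}(p_{k})\big)<\infty$, so the scheme converges to
\begin{equation*}
\sup_{t\in[0,T]}\|U(t)\|_{\mathbb{X}^{\infty}(\Omega,\Sigma)}\le\Psi\big(T,\|U_{0}\|_{\mathbb{X}^{\infty}(\Omega,\Sigma)}\big)<\infty\qquad\text{for every finite }T<T_{\max}.
\end{equation*}
The main obstacle I anticipate is precisely this last step: tracking the $p_{k}$-dependence of every constant through the recursion and confirming that the polynomial growth $L_{\lambda}(m)\sim cm^{\lambda}$ is exactly what keeps $\Psi(T,\cdot)$ finite for finite $T$, so that finite-time blow-up is excluded and $T_{\max}=\infty$.
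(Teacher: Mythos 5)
Your proposal is correct and follows essentially the same route as the paper's own proof: the same test function $(|u|^{m-1}u,(|u|^{m-1}u)|_{\Sigma})$, the same transfer of the interfacial term to the bulk via Lemma \ref{poincare-weak} together with Young's inequality and $\varepsilon<d_{0}$ so that the balance condition \eqref{balance} closes each level, the same Moser--Alikakos iteration with exponent gain coming from $d>N-2$, anchored at $m=1$ by a Gronwall estimate, and concluded by the blow-up alternative of Theorem \ref{SG}. The only material deviation is how the same-level term $L_{\lambda}(m)\bigl(y_{m}+|\Omega|\bigr)$ is neutralized before iterating: the paper keeps the (nonnegative) interface parts of the form, so that the full quantity $\mathcal{A}_{\Theta,\Sigma}\bigl(|U|^{\frac{m+1}{2}},|U|^{\frac{m+1}{2}}\bigr)$ survives on the left, and then absorbs via the interpolation \eqref{ee5}--\eqref{ee5t} and the form-Poincar\'e inequality of Lemma \ref{poincare-forms}, whereas you discard those terms and must perform the analogous interpolation-absorption against the previous level using the retained bulk gradient plus the trace embedding \eqref{cont-em} --- precisely the step you flagged as the remaining obstacle, and it is essential, since a direct Gronwall on $L_{\lambda}(m)y_{m}$ would produce a factor $e^{cm^{\lambda}t}$ whose logarithms are not summable along the iteration.
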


\begin{proof}
We have to show that the maximal time $T_{\max }=\infty $ (see \eqref{3.46})
because of the condition \eqref{balance} on the nonlinearities. This ensures
that the solution constructed in the proof of Theorem \ref{SG} is also
global. We shall perform a Moser-type iteration argument. In this step, $C>0$
will denote a constant that is independent of $t,$ $T_{\max }$, $m,$ $k$ and
initial data, which only depends on the other structural parameters of the
problem. Such a constant may vary even from line to line. Moreover, we shall
denote by $L_{\varepsilon }\left( m\right) $ a monotone nondecreasing
function in $m$ of order $\varepsilon ,$ for some nonnegative constant $%
\varepsilon $ independent of $m.$ More precisely, $L_{\varepsilon }\left(
m\right) \sim cm^{\varepsilon }$ as $m\rightarrow \infty $, for some
constant $c>0$.

Let $U(t)=(u(t),u(t)|_{\Sigma})$ be the local strong solution of problem %
\eqref{p1b} -\eqref{p4b} on $(0,T_{\max})$ given by Theorem \ref{SG}. Let $%
m\geq 1$ and consider the function $E_{m}:\;(0,\infty )\rightarrow \lbrack
0,\infty )$ defined by%
\begin{equation*}
E_{m}(t):=\Vert U(t)\Vert _{\mathbb{X}^{m+1}(\Omega ,\Sigma )}^{m+1}=\Vert
u(t)\Vert _{L^{m+1}\left( \Omega \backslash \Sigma \right) }^{m+1}+\Vert
u(t)\Vert _{L^{m+1}\left( \Sigma ,\mu _{\Sigma }\right) }^{m+1}.
\end{equation*}%
Notice that $E_{m}$ is well-defined on $\left( 0,T_{\max }\right) $ because $%
U=\left( u,u|_{\Sigma }\right) $ is bounded in $\Omega \times (0,T_{\max })$%
, $\left\vert \Omega \right\vert <\infty $ (i.e., the $N$-dimensional
Lebesgue measure of $\Omega $ is finite) and $\mu _{\Sigma }\left( \Sigma
\right) <\infty $. Since $U$ is a strong solution on $\left( 0,T_{\max
}\right) ,$ see Definition \ref{solu}, $U$ (as function of $t$) is
differentiable a.e. on $\left( 0,T_{\max }\right) $, whence, the function $%
E_{m}(t)$ is also differentiable for a.e. $t\in \left( 0,T_{\max }\right) $.%
\newline

\noindent \noindent \textit{Step 1 (Recursive relation)}. We begin by
showing that $E_{m}\left( t\right) $ satisfies a local recursive relation
which can be used to perform an iterative argument. Let $\xi =(\left\vert
u\right\vert ^{m-1}u, \left\vert u\right\vert ^{m-1}u|_{\Sigma}),$ $m\geq 1$%
. The boundedness of $u$ mentioned above together with the the fact that $%
U(t)\in D(\mathcal{A}_{\Theta,\Sigma})$ imply that $\xi\in D(\mathcal{A}%
_{\Theta,\Sigma})$. Testing the variational equation (\ref{de_form}) on $%
\left( 0,T_{\max }\right) $ with $\xi =(\left\vert u\right\vert ^{m-1}u,
\left\vert u\right\vert ^{m-1}u|_{\Sigma}),$ $m\geq 1$ gives
\begin{align}
& \frac{1}{m+1}\frac{d}{dt}E_{m}\left( t\right) +{\mathcal{A}}_{\Theta
,\Sigma }(U\left( t\right) ,\xi\left( t\right) )+\int_{\Omega \backslash
\Sigma }f\left( u\left( t\right) \right) \left\vert u\left( t\right)
\right\vert ^{m-1}u\left( t\right) dx  \label{bala-0} \\
& =\int_{\Sigma }h\left( u\left( t\right) \right) \left\vert u\left(
t\right) \right\vert ^{m-1}u\left( t\right) d\mu _{\Sigma }.  \notag
\end{align}%
Now since $\left\vert \Omega \right\vert =\left\vert \Omega \backslash
\Sigma \right\vert <\infty $, $\mu _{\Sigma }\left( \Sigma \right) <\infty ,$
we write%
\begin{align}
& \int_{\Omega \backslash \Sigma }f\left( u\right) \left\vert u\right\vert
^{m-1}udx-\int_{\Sigma }h\left( u\right) \left\vert u\right\vert ^{m-1}ud\mu
_{\Sigma }  \label{bala-1} \\
& =\int_{\Omega \backslash \Sigma }\left[ f\left( u\right) \left\vert
u\right\vert ^{m-1}u-\frac{\mu _{\Sigma }\left( \Sigma \right) }{\left\vert
\Omega \right\vert }h\left( u\right) \left\vert u\right\vert ^{m-1}u\right]
dx  \notag \\
& +\frac{\mu _{\Sigma }\left( \Sigma \right) }{\left\vert \Omega \right\vert
}\int_{\Omega \backslash \Sigma }\left( h\left( u\right) \left\vert
u\right\vert ^{m-1}u-\frac{1}{\mu _{\Sigma }\left( \Sigma \right) }%
\int_{\Sigma }h\left( u\right) \left\vert u\right\vert ^{m-1}ud\mu _{\Sigma
}\right) dx.  \notag
\end{align}%
Following a similar argument applied in \cite[Proposition 3.4]{Gal0}, we now
apply the Poincar\'e inequality (see Lemma \ref{poincare-weak}) to the last
term on the right-hand side of \eqref{bala-1}. We deduce%
\begin{align}
& \frac{\mu _{\Sigma }\left( \Sigma \right) }{\left\vert \Omega \right\vert }%
\left\vert \int_{\Omega \backslash \Sigma }\left( h\left( u\right)
\left\vert u\right\vert ^{m-1}u-\frac{1}{\mu _{\Sigma }\left( \Sigma \right)
}\int_{\Sigma }h\left( u\right) \left\vert u\right\vert ^{m-1}ud\mu _{\Sigma
}\right) dx\right\vert  \label{bala-2} \\
& \leq \frac{C_{\Omega ,\Sigma }\mu _{\Sigma }\left( \Sigma \right) }{%
\left\vert \Omega \right\vert }\left\Vert \nabla \left( h\left( u\right)
u\left\vert u\right\vert ^{m-1}\right) \right\Vert _{L^{1}\left( \Omega
\backslash \Sigma \right) }  \notag \\
& =C_{\Omega ,\Sigma }^{\ast }\left\Vert \left( h^{^{\prime }}\left(
u\right) u+mh\left( u\right) \right) \left\vert u\right\vert ^{m-1}\nabla
u\right\Vert _{L^{1}\left( \Omega \backslash \Sigma \right) }  \notag \\
& =C_{\Omega ,\Sigma }^{\ast }\int_{\Omega \backslash \Sigma }\left\vert
\left( \left\vert u\right\vert ^{\frac{m-1}{2}}\nabla u\right) \left\vert
u\right\vert ^{\frac{m-1}{2}}\left( h^{^{\prime }}\left( u\right) u+mh\left(
u\right) \right) \right\vert dx.  \notag
\end{align}%
By application of H\"{o}lder and Young inequalities, we can estimate the
last term in \eqref{bala-2} as follows:
\begin{align}
& C_{\Omega ,\Sigma }^{\ast }\left( \int_{\Omega \backslash \Sigma
}\left\vert u\right\vert ^{m-1}\left\vert \nabla u\right\vert ^{2}dx\right)
^{1/2}\left( \int_{\Omega \backslash \Sigma }\left\vert u\right\vert
^{m-1}\left( h^{^{\prime }}\left( u\right) u+mh\left( u\right) \right)
^{2}dx\right) ^{1/2}  \label{bala-3} \\
& =C_{\Omega ,\Sigma }^{\ast }\left( \frac{2}{m+1}\right) \left(
m\int_{\Omega \backslash \Sigma }\left\vert \nabla \left\vert u\right\vert ^{%
\frac{m+1}{2}}\right\vert ^{2}dx\right) ^{1/2}  \notag \\
& \times \left( \int_{\Omega \backslash \Sigma }\left\vert u\right\vert
^{m-1}\left( h^{^{\prime }}\left( u\right) u+mh\left( u\right) \right)
^{2}dx\right) ^{1/2}m^{-1/2}  \notag \\
& \leq \frac{4m\varepsilon }{\left( m+1\right) ^{2}}\int_{\Omega \backslash
\Sigma }\left\vert \nabla \left\vert u\right\vert ^{\frac{m+1}{2}%
}\right\vert ^{2}dx+\frac{\left( C_{\Omega ,\Sigma }^{\ast }\right)
^{2}m^{-1}}{4\varepsilon }\int_{\Omega \backslash \Sigma }\left\vert
u\right\vert ^{m-1}\left( h^{^{\prime }}\left( u\right) u+mh\left( u\right)
\right) ^{2}dx,  \notag
\end{align}
for every $\varepsilon>0$, where we have also used that
\begin{align*}
\left(\frac{m+1}{2}\right)^2|u|^{m-1}|\nabla u|^2=|\nabla |u|^{\frac{m+1}{2}%
}|^2.
\end{align*}
Recalling \eqref{bala-1}, owing to \eqref{bala-3} we can estimate%
\begin{align}
& \int_{\{x\in \Sigma :\left\vert u(x)\right\vert \geq s_{0}\}}h\left(
u\right) \left\vert u\right\vert ^{m-1}ud\mu _{\Sigma }-\int_{\{x\in\Omega
\backslash \Sigma :\left\vert u(x)\right\vert \geq s_{0}\}}f\left( u\right)
\left\vert u\right\vert ^{m-1}udx  \label{bala-4} \\
& \leq \int_{\{x\in\Omega \backslash \Sigma :\left\vert u(x)\right\vert \geq
s_{0}\}} \left[ -f\left( u\right) \left\vert u\right\vert ^{m-1}u+\frac{\mu
_{\Sigma }\left( \Sigma \right) }{\left\vert \Omega \right\vert }h\left(
u\right) \left\vert u\right\vert ^{m-1}u\right] dx  \notag \\
& +\frac{\left( C_{\Omega ,\Sigma }^{\ast }\right) ^{2}m^{-1}}{4\varepsilon }%
\int_{\{x\in\Omega \backslash \Sigma :\left\vert u(x)\right\vert \geq
s_{0}\}}\left\vert u\right\vert ^{m-1}\left( h^{^{\prime }}\left( u\right)
u+mh\left( u\right) \right) ^{2}dx+\frac{4m\varepsilon }{\left( m+1\right)
^{2}}\int_{\Omega \backslash \Sigma }\left\vert \nabla \left\vert
u\right\vert ^{\frac{m+1}{2}}\right\vert ^{2}dx.  \notag
\end{align}%
Let $\xi:=(\left\vert u\right\vert ^{m-1}u,\left\vert u\right\vert
^{m-1}u|_{\Sigma})$. Let us now observe that%
\begin{align}
{\mathcal{A}}_{\Theta ,\Sigma }(U,\xi) \geq& \frac{4md_{0}}{\left(
m+1\right) ^{2}}\int_{\Omega \backslash \Sigma }\left\vert \nabla \left\vert
u\right\vert ^{\frac{m+1}{2}}\right\vert ^{2}dx  \label{bala-5} \\
& +\frac{4m}{\left( m+1\right) ^{2}}\int_{\Sigma }\int_{\Sigma
}K(x,y)\left|\left\vert u\left( x\right) \right\vert ^{\frac{m+1}{2}%
}-\left\vert u\left( y\right) \right\vert ^{\frac{m+1}{2}}\right|^{2}d\mu
_{\Sigma }\left( x\right) d\mu _{\Sigma }\left( y\right)  \notag \\
& +\int_{\Sigma }\beta \left( x\right) \left\vert u\right\vert ^{m+1}d\mu
_{\Sigma },  \notag
\end{align}%
owing to \eqref{D} and the fact that
\begin{align*}
& \int_{\Sigma }\int_{\Sigma }K(x,y)(u(x)-u(y))(\left\vert u\right\vert
^{m-1}u(x)-\left\vert u\right\vert ^{m-1}u(y))d\mu _{\Sigma }\left( x\right)
d\mu _{\Sigma }\left( y\right) \\
& \geq \frac{4m}{\left( m+1\right) ^{2}}\int_{\Sigma }\int_{\Sigma
}K(x,y)\left|\left\vert u\left( x\right) \right\vert ^{\frac{m+1}{2}%
}-\left\vert u\left( y\right) \right\vert ^{\frac{m+1}{2}}\right|^{2}d\mu
_{\Sigma }\left( x\right) d\mu _{\Sigma }\left( y\right)
\end{align*}%
which follows from \cite[Lemma 3.4]{GW2}. Combining \eqref{bala-5} together
with \eqref{bala-4} and \eqref{bala-0} and the fact that on the sets $%
\left\{ x\in \Omega \backslash \Sigma :\left\vert u\right\vert \leq
s_{0}\right\} ,$ $\left\{ x\in \Sigma :\left\vert u\right\vert \leq
s_{0}\right\} $, the nonlinearities $f,h$ are bounded, and setting $|U|^{%
\frac{m+1}{2}}=(|u|^{\frac{m+1}{2}}, |u|^{\frac{m+1}{2}}|_{\Sigma})$, we
obtain%
\begin{equation}
\frac{d}{dt}E_{m}\left( t\right) +\gamma {\mathcal{A}}_{\Theta ,\Sigma
}(\left\vert U\left( t\right) \right\vert ^{\frac{m+1}{2}},\left\vert
U\left( t\right) \right\vert ^{\frac{m+1}{2}})\leq L_{\lambda }\left(
m+1\right) \left( \int_{\Omega \backslash \Sigma }\left\vert u\right\vert
^{m+1}dx+1\right) ,  \label{ee1}
\end{equation}%
for all $t\in \left( 0,T_{\max }\right) ,$ for some $\gamma =\gamma \left(
d_{0}\right) >0$ independent of $m,$ and $T_{\max }$. Next, set $%
m_{k}+1=2^{k},$ $k\in \mathbb{N}$, and define%
\begin{equation}
M_{k}:=\sup_{t\in \left( 0,T_{\max }\right) }\left(\int_{\Omega \backslash
\Sigma }\left\vert u\left( t,x\right) \right\vert ^{2^{k}}dx+\int_{\Sigma
}\left\vert u\left( t,x\right) \right\vert ^{2^{k}}d\mu _{\Sigma
}\right)=\sup_{t\in \left( 0,T_{\max }\right) }E_{m_{k}}\left( t\right) .
\label{def}
\end{equation}%
Our goal is to derive a recursive inequality for $M_{k}$ using (\ref{ee1}).
In order to do so, we define%
\begin{equation*}
\overline{p}_{k}:=\frac{m_{k}-m_{k-1}}{q\left( 1+m_{k}\right) -\left(
1+m_{k-1}\right) }=\frac{1}{2q-1}<1,\text{ }\overline{q}_{k}:=1-\overline{p}%
_{k}=2\left( \frac{q-1}{2q-1}\right)
\end{equation*}%
where $q>1$ is such that $D\left( {\mathcal{A}}_{\Theta ,\Sigma }\right)
\subset \mathbb{X}^{2q}\left( \Omega \backslash \Sigma \right) $ (here, $q=%
\frac{d}{N-2}$ with $d\in \left( N-2,N\right) \cap \left( 0,N\right) $, (see %
\eqref{cont-em}). We aim to estimate the term on the right-hand side of (\ref%
{ee1}) in terms of the $L^{1+m_{k-1}}\left( \Omega \backslash \Sigma \right)
$-norm of $u.$ First, we have (using the H\"older inequality and the
embedding $D\left( {\mathcal{A}}_{\Theta ,\Sigma }\right) \subset \mathbb{X}%
^{2q}\left( \Omega \backslash \Sigma \right) $)
\begin{align}
\int_{\Omega \backslash \Sigma }\left\vert u\right\vert ^{1+m_{k}}dx& \leq
\left( \int_{\Omega \backslash \Sigma }\left\vert u\right\vert ^{\left(
1+m_{k}\right) q}dx\right) ^{\overline{p}_{k}}\left( \int_{\Omega \backslash
\Sigma }\left\vert u\right\vert ^{1+m_{k-1}}dx\right) ^{\overline{q}_{k}}
\label{ee5} \\
& \leq C\left( {\mathcal{A}}_{\Theta ,\Sigma }(\left\vert U\right\vert ^{%
\frac{m_{k}+1}{2}},\left\vert U\right\vert ^{\frac{m_{k}+1}{2}})\right) ^{%
\overline{s}_{k}}\left( \int_{\Omega \backslash \Sigma }\left\vert
u\right\vert ^{1+m_{k-1}}dx\right) ^{\overline{q}_{k}},  \notag
\end{align}%
with $\overline{s}_{k}=\overline{p}_{k}q\equiv q/\left( 2q-1\right) \in
\left( 0,1\right) $. Applying now Young's inequality on the right-hand side
of (\ref{ee5}), we get for every $\varepsilon >0,$%
\begin{equation}
L_{\lambda }\left( m_{k}+1\right) \int_{\Omega \backslash \Sigma }\left\vert
u\right\vert ^{1+m_{k}}dx\leq \varepsilon {\mathcal{A}}_{\Theta ,\Sigma
}(\left\vert U\right\vert ^{\frac{m_{k}+1}{2}},\left\vert U\right\vert ^{%
\frac{m_{k}+1}{2}})+L_{\alpha }\left( m_{k}+1\right) \left( \int_{\Omega
\backslash \Sigma }\left\vert u\right\vert ^{1+m_{k-1}}dx\right) ^{2},
\label{ee5t}
\end{equation}%
for some $\alpha =\alpha \left( \varepsilon ,\lambda \right) >0$ independent
of $k$ since $\overline{q}_{k}/\left( 1-\overline{s}_{k}\right) \equiv 2$.
Hence, inserting \eqref{ee5t} into \eqref{ee1}, choosing a sufficiently
small $\varepsilon =\varepsilon _{0}<\min \left( \gamma /2,1\right) $, and
simplifying, we obtain for $t\in \left( 0,T_{\max }\right) ,$%
\begin{equation}
\frac{d}{dt}E_{m_{k}}\left( t\right) +\varepsilon _{0}{\mathcal{A}}_{\Theta
,\Sigma }(\left\vert U\left( t\right) \right\vert ^{\frac{m_{k}+1}{2}%
},\left\vert U\left( t\right) \right\vert ^{\frac{m_{k}+1}{2}})\leq
L_{\alpha }\left( m_{k}+1\right) \left( E_{m_{k-1}}\left( t\right) \right)
^{2}.  \label{ee6b}
\end{equation}%
Next, since $U\left( t\right) \in D\left(\mathcal{A}_{\Theta ,\Sigma}\right)
\cap \mathbb{X}^{\infty }\left( \Omega ,\Sigma \right) $, we have $|U(t)|^{%
\frac{1+m_k}{2}}:=(\left\vert u(t)\right\vert ^{\frac{1+m_{k}}{2}},
\left\vert u(t)\right\vert ^{\frac{1+m_{k}}{2}}|_{\Sigma})\in D({\mathcal{A}}%
_{\Theta ,\Sigma })$ for a.e. $t\in \left( 0,T_{\max }\right) $. Thus, we
can apply Lemma \ref{poincare-forms} (see (\ref{PDBC})) to infer that%
\begin{equation}
\varepsilon _{0}{\mathcal{A}}_{\Theta ,\Sigma }(\left\vert U\left( t\right)
\right\vert ^{\frac{m_{k}+1}{2}},\left\vert U\left( t\right) \right\vert ^{%
\frac{m_{k}+1}{2}})\geq E_{m_{k}}\left( t\right) -\varepsilon _{0}^{-\zeta
}\left( E_{m_{k-1}}\left( t\right) \right) ^{2}.  \label{ee6bis}
\end{equation}%
We can now combine \eqref{ee6b} with \eqref{ee6bis} to deduce%
\begin{equation}
\frac{d}{dt}E_{m_{k}}\left( t\right) +E_{m_{k}}\left( t\right) \leq
L_{\alpha }\left( 2^{k}\right) M_{k-1}^{2},  \label{ee7bis}
\end{equation}%
for $t\in \left( 0,T_{\max }\right) .$ Integrating \eqref{ee7bis} over $%
\left( 0,t\right) $, we infer from Gronwall-Bernoulli's inequality \cite[%
Lemma 1.2.4]{CD} that there exists yet another constant $C>0,$ independent
of $k$, such that%
\begin{equation}
M_{k}\leq \max \left\{ E_{m_{k}}\left( 0\right) ,C2^{k\alpha
}M_{k-1}^{2}\right\} ,\text{ for all }k\geq 2.  \label{claim2}
\end{equation}%
On the other hand, let us observe that there exists a positive constant $%
C_{\infty }=C_{\infty }(\left\Vert U_{0}\right\Vert _{\mathbb{X}^{\infty
}(\Omega ,\Sigma )})\geq 1,$ independent of $k$, such that $E_{m_{k}}\left(
0\right) ^{1/2^{k}}\leq C_{\infty }$. Taking the $2^{k}$-th root on both
sides of (\ref{claim2}), and defining $X_{k}:=\sup_{t\in \left( 0,T_{\max
}\right) }\left(E_{m_{k}}\left( t\right)\right) ^{1/2^{k}},$ we easily
arrive at%
\begin{equation}
X_{k}\leq \max \left\{ C_{\infty },\left( C2^{\alpha k}\right) ^{\frac{1}{%
2^{k}}}X_{k-1}\right\} ,\text{ for all }k\geq 2.  \label{ee7}
\end{equation}%
By straightforward induction in (\ref{ee7}) (see \cite[Lemma 3.2]{Ali}; cf.
also \cite[Lemma 9.3.1]{CD}), we finally obtain the estimate%
\begin{equation}
\sup_{t\in \left( 0,T_{\max }\right) }\left\Vert U\left( t\right)
\right\Vert _{\mathbb{X}^{\infty }(\Omega ,\Sigma )}\leq \lim_{k\rightarrow
+\infty }X_{k}\leq C\max \left\{ C_{\infty },\sup_{t\in \left( 0,T_{\max
}\right) }\left\Vert U\left( t\right) \right\Vert _{\mathbb{X}^{2}(\Omega
,\Sigma )}\right\} .  \label{ee8}
\end{equation}

\noindent \textit{Step 2 (The }$\mathbb{X}^{2}(\Omega ,\Sigma )$\textit{%
-bound)}. It remains to derive a global $L^{2}$-bound on the right-hand side
of (\ref{ee8}) in order to get full control of the $L^{\infty }$-bound. From %
\eqref{ee1} we readily see that%
\begin{equation}
\frac{d}{dt}E_{1}\left( t\right) +\gamma {\mathcal{A}}_{\Theta ,\Sigma
}(U\left( t\right) ,U\left( t\right) )\leq C\left( E_{1}\left( t\right)
+1\right) .  \label{3.1}
\end{equation}%
Integrating \eqref{3.1} over $\left( 0,t\right) $ with $t\in \left(
0,T\right) $ for any $T>0$ yields%
\begin{equation}
\left\Vert U\left( t\right) \right\Vert _{\mathbb{X}^{2}(\Omega ,\Sigma
)}^{2}+\gamma \int_{0}^{t}{\mathcal{A}}_{\Theta ,\Sigma }(U\left( \tau
\right) ,U\left( \tau \right) )d\tau \leq \left( \left\Vert U_{0}\right\Vert
_{\mathbb{X}^{2}(\Omega ,\Sigma )}^{2}+1\right) e^{Ct}.  \label{3.3}
\end{equation}%
Thus, we have derived a bound for $U=\left( u,u|_{\Sigma }\right) \in
L^{\infty }\left( (0,T);\newline
\mathbb{X}^{2}(\Omega ,\Sigma )\right) $, for any $T>0$. Finally, \eqref{ee8}
together with the global bound (\ref{3.3}) shows that $\left\Vert U\left(
t\right) \right\Vert _{\mathbb{X}^{\infty }(\Omega ,\Sigma )}$ is bounded
for all times $t>0$ with a bound, independent of $T_{\max },$ depending only
on $\left\Vert U_{0}\right\Vert _{\mathbb{X}^{\infty }(\Omega ,\Sigma )}$, $%
\left\vert \Omega \right\vert ,$ $\mu _{\Sigma }\left( \Sigma \right) ,$ $%
T>0 $ and the growth of the nonlinear functions $f,h.$ This gives $T_{\max
}=+\infty $ so that the (local) strong solution given by Theorem \ref{SG} is
in fact global. This completes the proof of the theorem.
\end{proof}

Consequently, we have the following general result in the case of polynomial
nonlinearities with a \emph{bad source} $h$ of \emph{arbitrary growth}
satisfying (\ref{bad-h}) for as long as the polynomial nonlinearity $f$\
acting in $\Omega \backslash \Sigma $ is strong enough to overcome it.

\begin{corollary}
\label{example-C}Let the assumptions of Theorem \ref{SG} and Lemma \ref%
{poincare-weak} be satisfied. Suppose that
\begin{equation*}
\lim_{\left\vert \tau\right\vert \rightarrow \infty }\frac{h^{^{\prime
}}\left( \tau\right) }{\left\vert \tau\right\vert ^{p}}=\left( p+1\right)
c_{h}\text{ and }\lim_{\left\vert \tau\right\vert \rightarrow \infty }\frac{%
f^{^{\prime }}\left( \tau\right) }{\left\vert \tau\right\vert ^{q}}=\left(
q+1\right) c_{f}
\end{equation*}%
with $c_{h}>0,c_{f}>0$ for some $p,q\geq 0$. Then the conclusion of Theorem %
\ref{global-dyn} holds provided that $q>2p$. In particular, problem \eqref
{p1b}-\eqref{p4b} possesses a unique global bounded solution in the sense of
Definition \ref{solu}.
\end{corollary}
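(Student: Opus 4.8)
The plan is to show that the asymptotic hypotheses on $f^{\prime }$ and $h^{\prime }$ force the structural balance condition \eqref{balance} of Theorem \ref{global-dyn} to hold with a right-hand side whose $m$-dependence is genuinely polynomial, after which the conclusion follows by direct appeal to that theorem. First I would integrate the two limits: since $f^{\prime }(\tau )/\left\vert \tau \right\vert ^{q}\rightarrow (q+1)c_{f}$ and $h^{\prime }(\tau )/\left\vert \tau \right\vert ^{p}\rightarrow (p+1)c_{h}$, and $\frac{d}{d\tau }(\left\vert \tau \right\vert ^{q}\tau )=(q+1)\left\vert \tau \right\vert ^{q}$, an elementary integration yields the leading-order behavior
\[
f(\tau )\sim c_{f}\left\vert \tau \right\vert ^{q}\tau ,\qquad h(\tau )\sim c_{h}\left\vert \tau \right\vert ^{p}\tau ,\qquad \left\vert \tau \right\vert \rightarrow \infty .
\]
In particular $q>2p\geq 0$ forces $q>0$, so $f$ is a genuinely super-linear sink of higher order than the super-linear source $h$.

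Next I would substitute these asymptotics into the three terms on the left of \eqref{balance}, recording both the power of $\left\vert \tau \right\vert$ and the power of $m$ in each. The sink term behaves like $-f(\tau )\left\vert \tau \right\vert ^{m-1}\tau \sim -c_{f}\left\vert \tau \right\vert ^{q+m+1}$; the interfacial source like $\frac{\mu _{\Sigma }(\Sigma )}{\left\vert \Omega \right\vert }h(\tau )\left\vert \tau \right\vert ^{m-1}\tau \sim \frac{\mu _{\Sigma }(\Sigma )}{\left\vert \Omega \right\vert }c_{h}\left\vert \tau \right\vert ^{p+m+1}$; and, using $h^{\prime }(\tau )\tau +mh(\tau )\sim c_{h}(m+p+1)\left\vert \tau \right\vert ^{p}\tau $, the Poincar\'{e}-interaction term like
\[
\frac{(C_{\Omega ,\Sigma }^{\ast })^{2}}{4m\varepsilon }\left\vert \tau \right\vert ^{m-1}\left( h^{\prime }(\tau )\tau +mh(\tau )\right) ^{2}\sim \frac{(C_{\Omega ,\Sigma }^{\ast })^{2}c_{h}^{2}(m+p+1)^{2}}{4m\varepsilon }\left\vert \tau \right\vert ^{2p+m+1}.
\]
The crucial point is that $(m+p+1)^{2}/m\sim m$, so this last term grows only \emph{linearly} in $m$ while carrying the exponent $2p+m+1$, and $q>2p$ guarantees the strict ordering $q+m+1>2p+m+1>p+m+1>m+1$.

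Fixing $\tau _{0}$ large enough that each asymptotic equivalence holds up to a fixed multiplicative constant on $\{\left\vert \tau \right\vert \geq \tau _{0}\}$, the remaining work is a Young-inequality interpolation. Since the two positive exponents lie strictly between $m+1$ and the sink exponent $q+m+1$, I would write, with $\theta =2p/q\in \lbrack 0,1)$ for the interaction term and $\theta ^{\prime }=p/q\in \lbrack 0,1)$ for the source term,
\[
\left\vert \tau \right\vert ^{2p+m+1}=\left( \left\vert \tau \right\vert ^{q+m+1}\right) ^{\theta }\left( \left\vert \tau \right\vert ^{m+1}\right) ^{1-\theta },
\]
and apply Young's inequality to absorb a small $\delta$-fraction of the sink $-c_{f}\left\vert \tau \right\vert ^{q+m+1}$, treating the source identically with $\theta ^{\prime }$. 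What survives is exactly a bound of the form $(\text{left side of \eqref{balance}})\leq L_{\lambda }(m)(\left\vert \tau \right\vert ^{m+1}+1)$ on $\{\left\vert \tau \right\vert \geq \tau _{0}\}$, the additive $+1$ accommodating the subleading corrections to the asymptotics. The admissibility $\theta ,\theta ^{\prime }<1$ is precisely where $q>2p$ is used: were it to fail, the interaction term could not be dominated by the sink.

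Finally I would track $L_{\lambda }(m)$. The source contributes a coefficient of $\left\vert \tau \right\vert ^{m+1}$ that is bounded in $m$, whereas the interaction term, whose prefactor is $\sim Cm$, contributes after Young's inequality a coefficient of order $(Cm)^{1/(1-\theta )}=Cm^{q/(q-2p)}$. Hence $L_{\lambda }(m)\sim cm^{\lambda }$ with $\lambda =q/(q-2p)>0$, which is exactly the polynomial growth permitted in Theorem \ref{global-dyn}. Invoking that theorem gives $T_{\max }=\infty $, and combined with the local theory of Theorem \ref{SG} this yields the unique global bounded strong solution. I expect the only delicate point to be the bookkeeping of the $m$-dependence through Young's inequality, so as to certify that the resulting exponent $\lambda =q/(q-2p)$ is finite and independent of $\tau $; the rest is a direct substitution of the integrated asymptotics.
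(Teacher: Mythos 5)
Your proof is correct and follows essentially the same route as the paper: integrate the hypotheses to get $f(\tau)\sim c_{f}\left\vert \tau\right\vert ^{q}\tau$, $h(\tau)\sim c_{h}\left\vert \tau\right\vert ^{p}\tau$, substitute into \eqref{balance}, and observe that $q>2p$ makes the sink term $-c_{f}\left\vert \tau\right\vert ^{m+q+1}$ dominate the other two leading terms. In fact your write-up is more complete than the paper's, which stops at identifying the leading terms \eqref{h-term} and asserting that \eqref{balance} follows; your Young-inequality bookkeeping, absorbing a $\delta$-fraction of the sink and producing the explicit rate $L_{\lambda }(m)\sim cm^{q/(q-2p)}$, is precisely the omitted verification that the right-hand side of \eqref{balance} has the polynomial $m$-dependence required by Theorem \ref{global-dyn}.
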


\begin{proof}
We begin by noting that for large $\left\vert \tau\right\vert \geq \tau_{0}$%
, we have
\begin{align*}
h\left( \tau\right) \sim c_{h}\left\vert \tau\right\vert
^{p}\tau,\;\;h\left( \tau\right) \tau\sim c_{h}\left\vert \tau\right\vert
^{p+2}\;\mbox{ and }\;\;f\left( \tau\right) \sim c_{f}\left\vert
\tau\right\vert ^{q}\tau, \;\;f\left( \tau\right) \tau\sim c_{f}\left\vert
\tau\right\vert ^{q+2}.
\end{align*}
Therefore as $\left\vert \tau\right\vert \rightarrow \infty $, the leading
terms on the left-hand side of \eqref{balance} are%
\begin{equation}
-c_{f}\left\vert \tau\right\vert ^{m+q+1}+\frac{\mu _{\Sigma }\left( \Sigma
\right) }{\left\vert \Omega \right\vert }c_{h}\left\vert \tau\right\vert
^{m+p+1}+\frac{\left( C_{\Omega ,\Sigma }^{\ast }\right) ^{2}\left(
m+p+1\right) ^{2}}{4m\varepsilon }c_{h}^{2}\left\vert \tau\right\vert
^{m+1+2p}  \label{h-term}
\end{equation}%
for any $m\geq 1$. By assumption $q>2p$ so that the coefficient of the
highest-order term in (\ref{h-term}) is $-c_{f}<0$, whence \eqref{balance}
is satisfied and the proof is finished.
\end{proof}

A close investigation of the proof of Theorems \ref{global-dyn} shows that
one can derive another global result when only \emph{minimal} geometrical
assumptions on the interface $\Sigma $ are required and when the function $h$
is still of bad sign but grows at most linearly at infinity.

\begin{corollary}
\label{cor-weaker}Assume that $\Omega \backslash \Sigma $ has the $%
\widetilde{W}^{1,2}$-extension property and that
\begin{align*}
f\left( \tau\right) \geq-c_{f}\tau^{2}\;\mbox{ and }\;h\left( \tau\right)
\leq c_{h}\tau^{2}\;\mbox{ for }\; \left\vert \tau\right\vert \geq \tau_{0},
\end{align*}
for some sufficiently large $\tau_{0}>0$ and some $c_{f},c_{h}>0$. Then for
every $\left( u_{0},v_{0}\right) \in \mathbb{X}^{\infty }(\Omega ,\Sigma ),$
there exists a unique global solution $U$ of \eqref{p1b}-\eqref{p4b}.
\end{corollary}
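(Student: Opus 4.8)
The plan is to argue exactly along the lines of Theorem \ref{global-dyn}, producing an a priori bound on $\|U(t)\|_{\mathbb{X}^{\infty }(\Omega ,\Sigma )}$ over every finite time interval; the essential new point is that the milder growth of $f,h$ lets us dispense with the Poincar\'e inequality of Lemma \ref{poincare-weak} (and hence with the existence of a bounded trace $\widetilde W^{1,1}(\Omega \backslash \Sigma )\to L^{1}(\Sigma ,\mu _{\Sigma })$), so that only the $\widetilde W^{1,2}$-extension property survives among the geometric hypotheses. First I would invoke Theorem \ref{SG} to obtain the unique local strong solution $U=(u,u|_{\Sigma })$ on a maximal interval $[0,T_{\max })$ together with the blow-up alternative: if $T_{\max }<\infty $ then $\|U(t)\|_{\mathbb{X}^{\infty }(\Omega ,\Sigma )}\to \infty $ as $t\uparrow T_{\max }$. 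Thus it suffices to rule out finite-time blow-up. Since Theorem \ref{SG}, parts (b)--(d) of Theorem \ref{theo-sg-dy}, the embedding \eqref{cont-em} and the form-Poincar\'e inequality of Lemma \ref{poincare-forms} all require only the $\widetilde W^{1,2}$-extension property, the entire functional-analytic toolbox used in the Moser scheme remains available.

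Next I would test the variational identity \eqref{de_form} with $\xi =(|u|^{m-1}u,|u|^{m-1}u|_{\Sigma })\in D(\mathcal{A}_{\Theta ,\Sigma })$, $m\ge 1$, to arrive at \eqref{bala-0}. In contrast with Theorem \ref{global-dyn}, where the interfacial source is transported into the bulk via Lemma \ref{poincare-weak}, here the growth hypotheses are exploited \emph{directly}: using that (for $|\tau |\ge \tau _{0}$, the bounded range being harmless) $f$ is controlled from below and $h$ from above, one bounds the bulk nonlinearity from below and the interfacial source from above by the single power $|u|^{m+1}$, with constants independent of $m$. Combining this with the coercivity estimate \eqref{bala-5} for $\mathcal{A}_{\Theta ,\Sigma }(U,\xi )$ gives, for all $t\in (0,T_{\max })$,
\[
\frac{d}{dt}E_{m}(t)+\gamma\,{\mathcal{A}}_{\Theta ,\Sigma }\big(|U(t)|^{\frac{m+1}{2}},|U(t)|^{\frac{m+1}{2}}\big)\le C(m+1)\Big(\|U(t)\|_{\mathbb{X}^{m+1}(\Omega ,\Sigma )}^{m+1}+1\Big),
\]
the only difference with \eqref{ee1} being that the full $\mathbb{X}^{m+1}$-norm (i.e.\ both the volume and the new surface contribution) now appears on the right, precisely because the surface term is no longer moved into $\Omega \backslash \Sigma $.

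The key step is then the reabsorption. I would apply the H\"older--Young interpolation \eqref{ee5}--\eqref{ee5t} to the whole norm $\|U\|_{\mathbb{X}^{1+m_{k}}}^{1+m_{k}}$, using the \emph{product} embedding $D(\mathcal{A}_{\Theta ,\Sigma })\hookrightarrow \mathbb{X}^{2q}$ with $2q=2_{\star }=\frac{2d}{N-2}$ furnished by \eqref{cont-em} (equivalently by \eqref{sob-ul}), which simultaneously controls the bulk $L^{2_{\star }}(\Omega )$ and the surface $L^{2_{\star }}(\Sigma )$ norms of $|u|^{\frac{m+1}{2}}$. Because $2q>1$ the exponents $\overline p_{k},\overline q_{k},\overline s_{k}$ and the identity $\overline q_{k}/(1-\overline s_{k})=2$ are unchanged, so both contributions are dominated by $\varepsilon\,{\mathcal{A}}_{\Theta ,\Sigma }(|U|^{\frac{m+1}{2}},|U|^{\frac{m+1}{2}})+L_{\alpha }(m+1)\,E_{m_{k-1}}^{2}$ with $L_{\alpha }$ still of polynomial order in $m$. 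Setting $m_{k}+1=2^{k}$, invoking \eqref{ee6bis} as before, one recovers \eqref{ee6b}, then \eqref{ee7bis}, and the induction \eqref{ee7}--\eqref{ee8} yields
\[
\sup_{t\in(0,T_{\max })}\|U(t)\|_{\mathbb{X}^{\infty }(\Omega ,\Sigma )}\le C\max\Big\{C_{\infty },\ \sup_{t\in(0,T_{\max })}\|U(t)\|_{\mathbb{X}^{2}(\Omega ,\Sigma )}\Big\}.
\]

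It remains to bound the right-hand side. Taking $m=1$ in the displayed differential inequality, the surface and volume integrals are each at most $CE_{1}$, so $\frac{d}{dt}E_{1}\le C(E_{1}+1)$ and Gronwall's inequality keeps $\|U(t)\|_{\mathbb{X}^{2}(\Omega ,\Sigma )}$ finite on every finite interval, with a bound depending only on $\|U_{0}\|_{\mathbb{X}^{2}(\Omega ,\Sigma )}$, $|\Omega |$, $\mu _{\Sigma }(\Sigma )$ and the elapsed time. Consequently, were $T_{\max }<\infty $, the supremum on the right would be finite, forcing $\sup_{(0,T_{\max })}\|U(t)\|_{\mathbb{X}^{\infty }(\Omega ,\Sigma )}<\infty $ and contradicting the blow-up alternative; hence $T_{\max }=\infty $ and the solution is global. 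The step I expect to be the main obstacle is precisely this surface reabsorption: one must confirm that the interfacial source, no longer transported to the bulk, is genuinely dominated by the coercive part of $\mathcal{A}_{\Theta ,\Sigma }$ on $\Sigma $ (the $\beta $-term \eqref{beta} together with the fractional-Sobolev higher integrability $\mathbb{B}_{d,s}^{2}(\Sigma ,\mu _{\Sigma })\hookrightarrow L^{2_{\star }}(\Sigma ,\mu _{\Sigma })$), \emph{uniformly in} $m$, so that the iteration constants grow no faster than polynomially and the Moser scheme converges exactly as in the bulk case.
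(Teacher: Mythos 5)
Your proposal is correct and follows essentially the same route as the paper's own proof: the paper likewise drops Lemma \ref{poincare-weak}, bounds $f(u)|u|^{m-1}u$ from below and $h(u)|u|^{m-1}u$ from above directly by $c|u|^{m+1}$ plus a constant, observes that \eqref{ee6b} still holds owing to \eqref{ee5t} (with the surface contribution absorbed exactly via the product embedding $D(\mathcal{A}_{\Theta,\Sigma})\hookrightarrow\mathbb{X}^{2q}(\Omega,\Sigma)$ you identify), and then invokes Steps 1--2 of Theorem \ref{global-dyn} to get the $\mathbb{X}^{2}$-to-$\mathbb{X}^{\infty}$ bound and rule out finite-time blow-up. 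Your write-up merely makes explicit the surface-reabsorption step that the paper compresses into ``it is easy to see.''
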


\begin{proof}
By assumption, one can find $C_{f}\geq 0$ and $C_{h}\geq 0$ such that $%
f\left( \tau\right) \geq -c_{f}\tau^{2}-C_{f}$ and $h\left( \tau\right) \leq
c_{h}\tau^{2}+C_{h}$ for all $\tau\in \mathbb{R}$. These conditions yield $%
f\left( \tau\right) \left\vert \tau\right\vert ^{m-1}\tau\geq
-c_{f}\left\vert \tau\right\vert ^{m+1}$ and $h\left( \tau\right) \tau\leq
c_{h}\left\vert \tau\right\vert ^{m+1}$ for large enough $\left\vert
\tau\right\vert \geq \tau_{0}$. Henceforth, it is easy to see that
inequality \eqref{ee6b} with $m\geq 1$ still holds in this case owing to %
\eqref{ee5t}. It follows that $U\in L^{\infty }\left((0,T);\mathbb{X}%
^{\infty }(\Omega ,\Sigma )\right) $ owing to the \textit{Steps 1,2} of the
proof of Theorem \ref{global-dyn}. This completes the proof.
\end{proof}

\begin{remark}
\label{ex-lip} \emph{If $\Sigma $ is a Lipschitz hypersurface of dimension $%
N-1$, then $\mu _{\Sigma }=\sigma _{\Sigma }$, and all hypotheses of Lemma %
\ref{poincare-weak} are satisfied and $\Omega \backslash \Sigma $ has the $%
\widetilde{W}^{1,2}$-extension property. Thus, all the conclusions of
Theorem \ref{SG} and Theorem \ref{global-dyn} hold for the transmission
problem \eqref{p1b}-\eqref{p4b} in this case provided that the
nonlinearities satisfy the given assumptions. In particular, we recover the
existence results given in \cite{EMR2,EMR} for the linear transmission
problem with $\left( f,h\right) =\left( 0,0\right) $ and $\mathbf{D}$ is
non-degenerate and symmetric.}
\end{remark}

We finally conclude this section with the following result.

\begin{corollary}
\label{dyn_system}Let the assumptions of either Theorem \ref{global-dyn} or
Corollary \ref{cor-weaker} be satisfied. Then the transmission problem %
\eqref{p1b}-\eqref{p4b} defines a (nonlinear) continuous semigroup $\mathcal{%
S}\left( t\right) :\mathbb{X}^{\infty }(\Omega ,\Sigma )\rightarrow \mathbb{X%
}^{\infty }(\Omega ,\Sigma )$, given by
\begin{equation*}
\mathcal{S}\left( t\right) U_{0}=U\left( t\right) =\left( u\left( t\right)
,u\left( t\right) |_{\Sigma }\right) ,
\end{equation*}%
where $U$ is the (unique) strong solution of \eqref{p1b}-\eqref{p4b} in the
sense of Definition \ref{solu}.
\end{corollary}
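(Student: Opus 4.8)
The plan is to verify the three defining attributes of a continuous (nonlinear) semigroup: global well-definedness with values in $\mathbb{X}^{\infty}(\Omega,\Sigma)$, the algebraic identities $\mathcal{S}(0)=\mathrm{Id}$ and $\mathcal{S}(t+s)=\mathcal{S}(t)\mathcal{S}(s)$, and joint continuity in $t$ and in the initial datum. The first point is immediate: under the hypotheses of Theorem \ref{global-dyn} or Corollary \ref{cor-weaker}, every $U_{0}\in\mathbb{X}^{\infty}(\Omega,\Sigma)$ yields a \emph{global} strong solution by Theorem \ref{SG}, and the regularity class \eqref{reg_weak} guarantees $U(t)\in\mathbb{X}^{\infty}(\Omega,\Sigma)$ for every $t\ge0$, so that $\mathcal{S}(t)U_{0}:=U(t)$ defines a map $\mathbb{X}^{\infty}(\Omega,\Sigma)\to\mathbb{X}^{\infty}(\Omega,\Sigma)$. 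That $\mathcal{S}(0)=\mathrm{Id}$ is exactly the prescribed initial condition in Definition \ref{solu}.

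For the semigroup property I would exploit uniqueness together with the autonomy of \eqref{p1b}--\eqref{p4b}. Fix $s\ge0$ and put $V(t):=U(t+s)$. Since the operator $A_{\Theta,\Sigma}$ and the nonlinearity $F=(f,-h)$ are independent of time, $V$ is again a strong solution of \eqref{op_v} on $[0,\infty)$, now with initial datum $V(0)=U(s)=\mathcal{S}(s)U_{0}\in\mathbb{X}^{\infty}(\Omega,\Sigma)$; the required regularity of $V$ near $t=0$ is inherited from that of $U$ near $t=s$, which is at least as good by the smoothing in \eqref{reg_weak}. By the uniqueness assertion of Theorem \ref{SG} we obtain $V(t)=\mathcal{S}(t)(\mathcal{S}(s)U_{0})$, and evaluating gives $U(t+s)=\mathcal{S}(t)\mathcal{S}(s)U_{0}$, that is $\mathcal{S}(t+s)=\mathcal{S}(t)\mathcal{S}(s)$.

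Continuity in time of $t\mapsto\mathcal{S}(t)U_{0}$ follows at once from $U\in C([0,T];\mathbb{X}^{\infty}(\Omega,\Sigma))$, which is part of \eqref{reg_weak}. The remaining, genuinely analytic, point is continuous dependence on the datum, for which I would return to the Duhamel representation \eqref{mapping}. Given $U_{0,1},U_{0,2}$ with global solutions $U_{1},U_{2}$, subtracting the two integral identities and invoking the $\mathbb{X}^{\infty}$-contractivity \eqref{nonexp} of $(e^{-tA_{\Theta,\Sigma}})_{t\ge0}$ yields
\begin{equation*}
\|U_{1}(t)-U_{2}(t)\|_{\mathbb{X}^{\infty}(\Omega,\Sigma)}\le\|U_{0,1}-U_{0,2}\|_{\mathbb{X}^{\infty}(\Omega,\Sigma)}+Q_{f,h}(R)\int_{0}^{t}\|U_{1}(\tau)-U_{2}(\tau)\|_{\mathbb{X}^{\infty}(\Omega,\Sigma)}\,d\tau,
\end{equation*}
where, exactly as in the proof of Theorem \ref{SG}, the local Lipschitz bound for $F$ enters through $Q_{f,h}$, and $R$ is any common a priori $\mathbb{X}^{\infty}$-bound for $U_{1},U_{2}$ on $[0,T]$. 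Gronwall's lemma then gives $\|U_{1}(t)-U_{2}(t)\|_{\mathbb{X}^{\infty}(\Omega,\Sigma)}\le e^{Q_{f,h}(R)T}\|U_{0,1}-U_{0,2}\|_{\mathbb{X}^{\infty}(\Omega,\Sigma)}$ for $t\in[0,T]$, which is the (locally Lipschitz) continuity of each $\mathcal{S}(t)$; combined with time-continuity this delivers joint continuity of $(t,U_{0})\mapsto\mathcal{S}(t)U_{0}$.

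The main obstacle is this last estimate, though not its Gronwall mechanics, which are routine. The delicate point is the bookkeeping needed to produce a \emph{uniform} constant $R$: one must argue that the global bound furnished by Theorem \ref{global-dyn} (or Corollary \ref{cor-weaker}) depends on the datum only through $\|U_{0}\|_{\mathbb{X}^{\infty}(\Omega,\Sigma)}$ (together with $|\Omega|$, $\mu_{\Sigma}(\Sigma)$, $T$ and the growth of $f,h$), so that $R$ may be chosen uniformly as $U_{0}$ ranges over any bounded subset of $\mathbb{X}^{\infty}(\Omega,\Sigma)$. With that uniformity in hand the Lipschitz bound holds on bounded sets and the proof is complete.
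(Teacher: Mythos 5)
Your proof is correct, and it fills in details the paper leaves implicit: the paper states Corollary \ref{dyn_system} with no proof at all, treating it as an immediate consequence of Theorem \ref{SG} (local existence, uniqueness), Theorem \ref{global-dyn} or Corollary \ref{cor-weaker} (globality and uniform boundedness), and the regularity class \eqref{reg_weak} of Definition \ref{solu}. Your verification of well-definedness, of $\mathcal{S}(0)=\mathrm{Id}$, of the semigroup identity via uniqueness and time-translation invariance, and of time continuity is exactly this implicit argument. Where you genuinely diverge from the paper is in the continuity with respect to the initial datum: when the authors need this property later (proof of Theorem \ref{m1} in Section \ref{gl}), they obtain Lipschitz dependence in the $\mathbb{X}^{2}(\Omega,\Sigma)$-topology by energy estimates for the difference of two solutions (Lemma \ref{L5}, estimate \eqref{diff1}) and then upgrade to the $\mathbb{X}^{\infty}(\Omega,\Sigma)$-topology via the $\mathbb{X}^{2}$-$\mathbb{X}^{\infty}$ smoothing property \eqref{sup-b}, whereas you run a direct $\mathbb{X}^{\infty}$ Gronwall argument on the Duhamel representation \eqref{mapping}, using the $\mathbb{X}^{\infty}$-contractivity \eqref{nonexp} of the linear semigroup. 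Both routes work: yours is more elementary and stays entirely within the toolkit of Section \ref{strong-sol} (it only additionally requires the routine observation that every strong solution satisfies the integral equation on $[0,T]$, which follows from the fixed-point construction, the extension argument and the semigroup law for $e^{-tA_{\Theta,\Sigma}}$), while the paper's route has the side benefit of producing the $\mathbb{X}^{2}$-level estimates that are actually needed for the exponential attractor construction. Your identification of the one delicate bookkeeping point --- that the a priori bound $R$ must be uniform over bounded sets of data, i.e.\ that the global bound of Theorem \ref{global-dyn} depends on $U_{0}$ only through $\Vert U_{0}\Vert_{\mathbb{X}^{\infty}(\Omega,\Sigma)}$ (besides $|\Omega|$, $\mu_{\Sigma}(\Sigma)$, $T$ and the growth of $f,h$) --- is accurate, and that uniformity is stated explicitly at the end of the proof of Theorem \ref{global-dyn}, so your argument closes without any gap.
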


\section{Finite dimensional attractors}

\label{gl}

The present section is focused on the long-term analysis of the transmission
problem \eqref{p1b}-\eqref{p4b}. We proceed to investigate its asymptotic
properties using the notion of an exponential attractor. We begin with the
following.

\begin{definition}
\label{gl_notion} Let $\mathcal{S}\left( t\right)$ be the semigroup on $%
\mathbb{X}^{\infty }(\Omega ,\Sigma )$ associated with \eqref{p1b}-%
\eqref{p4b} given in Corollary \ref{dyn_system}. A set $\mathcal{G}_{\Theta
,\Sigma }$ is an \textbf{exponential attractor} of the semigroup $\mathcal{S}%
\left( t\right) $ if the following assertions hold.

\begin{itemize}
\item $\mathcal{G}_{\Theta ,\Sigma }$ is compact in $\mathbb{X}^{2}\left(
\Omega ,\Sigma \right) $ and bounded in $\mathbb{X}^{\infty }\left( \Omega
,\Sigma \right) \cap D(\mathcal{A}_{\Theta ,{\Sigma }})$;

\item $\mathcal{G}_{\Theta ,\Sigma }$ is positively invariant, that is, $%
\mathcal{S}(t)\mathcal{G}_{\Theta ,\Sigma }\subseteq \mathcal{G}_{\Theta
,\Sigma },\;\;\;\forall \;t\geq 0$;

\item $\mathcal{G}_{\Theta ,\Sigma }$ attracts the images of all bounded
subsets of $\mathbb{X}^{\infty }\left( \Omega ,\Sigma \right) $ at an
exponential rate, namely, there exist two constants $\rho >0,C>0$ such that%
\begin{equation*}
dist_{\mathbb{X}^{\infty }\left( \Omega ,\Sigma \right) }\left( \mathcal{S}%
\left( t\right) B,\mathcal{G}_{\Theta ,\Sigma }\right) \leq Ce^{-\rho t},%
\text{ for all }t\geq 0,
\end{equation*}%
for every bounded subset $B$ of $\mathbb{X}^{\infty }\left( \Omega ,\Sigma
\right) $. Here, $dist_{\mathcal{H}}$ denotes the standard Hausdorff
semidistance between sets in a Banach space $\mathcal{H}$;

\item $\mathcal{G}_{\Theta ,\Sigma }$ has finite fractal dimension in $%
\mathbb{X}^{\infty }\left( \Omega ,\Sigma \right) $.
\end{itemize}
\end{definition}

The main result of this section gives the existence of such an attractor.

\begin{theorem}
\label{m1}Let the assumptions of Corollary \ref{dyn_system} be satisfied and
assume that $\Omega \backslash \Sigma $ has the $\widetilde{W}^{1,2}$%
-extension property. Furthermore, assume that for all $\tau \in \mathbb{R}$
it holds%
\begin{equation}
-f\left( \tau \right) \tau +\frac{\mu _{\Sigma }\left( \Sigma \right) }{%
\left\vert \Omega \right\vert }h\left( \tau \right) \tau +\frac{\left(
C_{\Omega ,\Sigma }^{\ast }\right) ^{2}}{4\varepsilon }\left( h^{^{\prime
}}\left( \tau \right) \tau +h\left( \tau \right) \right) ^{2}\leq \lambda
_{\ast }\tau ^{2}+C_{f,h}  \label{dissip-bala}
\end{equation}%
for some $\varepsilon \in (0,d_{0}),$ $C_{f,h}\geq 0$ and $\lambda _{\ast
}\in \lbrack 0,\overline{C})$ where $\overline{C}=C\left( \Omega ,\Sigma
,d_{0},\beta \right) >0$ is the best Sobolev-Poincar\'{e} constant in the
embedding (for $U\in D(\mathcal{A}_{\Theta ,\Sigma })$)
\begin{equation}
\overline{C}\left\Vert U\right\Vert _{\mathbb{X}^{2}\left( \Omega \backslash
\Sigma \right) }^{2}\leq \left( d_{0}-\varepsilon \right) \left\Vert \nabla
u\right\Vert _{L^{2}\left( \Omega \backslash \Sigma \right) }^{2}+\left\Vert
\beta ^{1/2}u\right\Vert _{L^{2}\left( \Sigma ,d\mu _{\Sigma }\right)
}^{2}+\int_{\Sigma }\int_{\Sigma }K(x,y)|u(x)-u(u)|^{2}\;d\mu _{\Sigma
}(x)d\mu _{\Sigma }(y).  \label{embed}
\end{equation}%
Then problem \eqref{p1b}-\eqref{p4b} has an exponential attractor $\mathcal{G%
}_{\Theta ,\Sigma }$\ in the sense of Definition \ref{gl_notion}.
\end{theorem}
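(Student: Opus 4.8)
The plan is to establish the four defining properties of an exponential attractor (Definition \ref{gl_notion}) by first constructing a bounded absorbing set, then verifying a smoothing/compactness property on the difference of two trajectories, and finally invoking the abstract exponential attractor machinery. I would carry out the argument in the energy space $\mathbb{X}^{2}(\Omega,\Sigma)$ for compactness, using the stronger $\mathbb{X}^{\infty}$-bounds already available from Theorem \ref{global-dyn} and Corollary \ref{dyn_system}.

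The first step is \textbf{dissipativity}. The point of the hypothesis \eqref{dissip-bala} is exactly the $m=1$ analogue of the balance condition \eqref{balance}, so repeating \textit{Step 2} of the proof of Theorem \ref{global-dyn} with the sharp constant, I would test \eqref{de_form} with $\xi = U(t)$ and use \eqref{dissip-bala} together with the Sobolev--Poincar\'e embedding \eqref{embed} to obtain a differential inequality of the form
\begin{equation*}
\frac{d}{dt}\|U(t)\|_{\mathbb{X}^{2}(\Omega,\Sigma)}^{2} + 2(\overline{C}-\lambda_{\ast})\|U(t)\|_{\mathbb{X}^{2}(\Omega,\Sigma)}^{2} \leq C_{f,h}.
\end{equation*}
Since $\lambda_{\ast}<\overline{C}$, Gronwall's inequality yields a uniform-in-time bound and hence a bounded absorbing set $\mathcal{B}_{0}$ in $\mathbb{X}^{2}(\Omega,\Sigma)$; combined with \eqref{ee8} this upgrades to an absorbing set that is bounded in $\mathbb{X}^{\infty}(\Omega,\Sigma)$. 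Next, applying the $D(\mathcal{A}_{\Theta,\Sigma})$-regularity from Theorem \ref{ap_reg_thm} (as in \eqref{3.46}) to trajectories starting in $\mathcal{B}_{0}$, together with the compact embedding $D(\mathcal{A}_{\Theta,\Sigma})\hookrightarrow\mathbb{X}^{2}(\Omega,\Sigma)$ from Theorem \ref{theo-sg-dy}(b), I would produce a \emph{compact} absorbing set $\mathcal{B}$ that is bounded in $\mathbb{X}^{\infty}(\Omega,\Sigma)\cap D(\mathcal{A}_{\Theta,\Sigma})$.

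The second and harder step is the \textbf{smoothing property} for differences of solutions on the absorbing set. Writing $W=U_{1}-U_{2}$ for two solutions with data in $\mathcal{B}$, I would test the difference of the two copies of \eqref{de_form} with $W$ and then with $A_{\Theta,\Sigma}W$ (or equivalently differentiate in time), using the strong accretivity of $A_{\Theta,\Sigma}$ and the local Lipschitz bounds on $f,h$ (which are controlled on $\mathcal{B}$ because of the uniform $\mathbb{X}^{\infty}$-bound). The aim is a squeezing/smoothing estimate of the form
\begin{equation*}
\|W(t)\|_{D(\mathcal{A}_{\Theta,\Sigma}^{1/2})}^{2} \leq C\,t^{-1}e^{Kt}\,\|W(0)\|_{\mathbb{X}^{2}(\Omega,\Sigma)}^{2},
\end{equation*}
which, via the compact embedding $D(\mathcal{A}_{\Theta,\Sigma}^{1/2})\hookrightarrow\mathbb{X}^{2}(\Omega,\Sigma)$, gives the discrete smoothing property $\|S(t_{\ast})U_{1}-S(t_{\ast})U_{2}\|_{Y}\leq L\|U_{1}-U_{2}\|_{\mathbb{X}^{2}}$ for some compactly embedded $Y\hookrightarrow\hookrightarrow\mathbb{X}^{2}(\Omega,\Sigma)$ at a fixed time $t_{\ast}>0$. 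The main obstacle I anticipate is handling the \emph{nonlocal} interfacial term $\Theta_{\Sigma}$ and the interfacial source $h$ in these difference estimates: one must exploit the bilinearity and symmetry of the form in \eqref{nonlocal-op} so that $\Theta_{\Sigma}$ contributes nonnegatively (as in \eqref{bala-5}), and control the $h$-difference on $\Sigma$ using the trace/embedding \eqref{cont-em} rather than an interior estimate.

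Finally, I would combine the existence of a compact positively invariant absorbing set $\mathcal{B}$ with the smoothing property above, together with the Lipschitz continuity of $t\mapsto S(t)U_{0}$ on $\mathcal{B}$ (which follows from \eqref{3.46}), and invoke the standard abstract construction of exponential attractors for discrete and then continuous semigroups (the Efendiev--Miranville--Zelik scheme). This produces a set $\mathcal{G}_{\Theta,\Sigma}$ of finite fractal dimension in $\mathbb{X}^{2}(\Omega,\Sigma)$; the uniform $\mathbb{X}^{\infty}$-bound on $\mathcal{B}$ and the ultracontractivity \eqref{uultra} then transfer finite fractal dimensionality and exponential attraction to the $\mathbb{X}^{\infty}(\Omega,\Sigma)$-topology, giving all the properties required in Definition \ref{gl_notion} and completing the proof.
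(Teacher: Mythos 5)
Your proposal is correct in outline and shares the paper's skeleton (absorbing set via \eqref{dissip-bala} and \eqref{embed}, difference estimates, abstract exponential-attractor machinery, discrete-to-continuous extension, transfer to $\mathbb{X}^{\infty }$), but the central compactness mechanism you use is genuinely different. You prove an \emph{instantaneous} smoothing property for differences of solutions, $\left\Vert W(t)\right\Vert _{D(\mathcal{A}_{\Theta ,\Sigma }^{1/2})}^{2}\leq Ct^{-1}e^{Kt}\left\Vert W(0)\right\Vert _{\mathbb{X}^{2}}^{2}$, obtained by testing the difference equation with $tA_{\Theta ,\Sigma }W$, and then feed the compact embedding of the form domain into $\mathbb{X}^{2}(\Omega ,\Sigma )$ into the Efendiev--Miranville--Zelik scheme. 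The paper instead applies the Efendiev--Zelik result (Proposition \ref{abstract}) in \emph{trajectory spaces}: it takes $\mathcal{V}_{1}=L^{2}((0,T^{\ast });D(\mathcal{A}_{\Theta ,\Sigma }))\cap W^{1,2}((0,T^{\ast });(D(\mathcal{A}_{\Theta ,\Sigma }))^{\ast })$, $\mathcal{V}=L^{2}((0,T^{\ast });\mathbb{X}^{2}(\Omega ,\Sigma ))$, and lets $\mathbb{T}$ map initial data to the whole trajectory, so that compactness comes from Aubin--Lions and only the estimates \eqref{diff1}--\eqref{diff2} of Lemma \ref{L5} are needed; these follow from testing with $W$ alone, never with $A_{\Theta ,\Sigma }W$. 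The paper's route is thus more elementary (no need to justify that differences of strong solutions can be paired with $A_{\Theta ,\Sigma }W$, which requires the a.e.-in-time operator-domain regularity), while your route yields a stronger, pointwise-in-time squeezing estimate that many readers will find more familiar; both are legitimate, and your worry about the nonlocal term $\Theta _{\Sigma }$ is in fact harmless since it is part of the linear self-adjoint operator and only contributes nonnegatively through the form.

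One step deserves more care: the transfer of finite dimensionality and exponential attraction to the $\mathbb{X}^{\infty }(\Omega ,\Sigma )$-topology. You invoke the linear ultracontractivity \eqref{uultra} via (implicitly) a variation-of-constants argument, but the resulting singular kernel $(t-s)^{-\gamma /4}$ need not be integrable, since $\gamma =\frac{2d}{d-N+2}$ can exceed $4$ for $d$ close to $N-2$; the naive Duhamel bound then fails. The paper avoids this by using the \emph{nonlinear} $\mathbb{X}^{2}$--$\mathbb{X}^{\infty }$ smoothing property \eqref{sup-b}, established by the same Moser iteration as in Theorem \ref{global-dyn} and applied to differences of solutions (Lemma \ref{time_cont} and the end of the proof of Theorem \ref{m1}); you should replace the ultracontractivity argument by this iteration-based estimate (or by a bootstrapped Duhamel argument through intermediate $\mathbb{X}^{p}$ spaces), after which your proof closes.
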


\begin{remark}
\emph{Notice that \eqref{dissip-bala} is roughly the same as the general
balance condition \eqref{balance} when $m=1$ but one has explicit control of
the constant on the right-hand side of \eqref{balance} when $m=1$. We also
note that \eqref{embed} is always satisfied for $U\in D(\mathcal{A}_{\Theta ,%
{\Sigma }})$. Moreover, if $\Sigma $ is as in Remark \ref{ex-lip} or more
generally, $\Omega \backslash \Sigma $ has the $\widetilde{W}^{1,2}$%
-extension property, then the embedding $D(\mathcal{A}_{\Theta ,\Sigma
})\hookrightarrow \mathbb{X}^{2}(\Omega ,\Sigma )$ is also compact.}
\end{remark}

Since the exponential attractor always contains the\ global attractor, as a
consequence of Theorem \ref{m1} we immediately have the following.

\begin{theorem}
\label{m1b}Let the assumptions of Theorem \ref{m1} be satisfied. The
semigroup $\mathcal{S}\left( t\right) $ associated with the transmission
problem \eqref{p1b}-\eqref{p4b} possesses a global attractor $\mathbb{A}%
_{\Theta ,\Sigma },$ bounded in $\mathbb{X}^{\infty }\left( \Omega ,\Sigma
\right) \cap D(\mathcal{A}_{\Theta ,{\Sigma }})$, compact in $\mathbb{X}%
^{2}\left( \Omega ,\Sigma \right) $ and of finite fractal dimension in the $%
\mathbb{X}^{\infty }\left( \Omega ,\Sigma \right) $-topology. This attractor
is generated by all complete bounded trajectories of \eqref{p1b}-\eqref{p4b}%
, that is, $\mathbb{A}_{\Theta ,\Sigma }=\mathcal{K}_{\Theta ,\Sigma \mid
t=0}$, where $\mathcal{K}_{\Theta ,\Sigma }$ is the set of all strong
solutions $U=\left( u,u|_{\Sigma }\right) $ which are defined for all $t\in
\mathbb{R}_{+}$ and bounded in the $\mathbb{X}^{\infty }\left( \Omega
,\Sigma \right) \cap D(\mathcal{A}_{\Theta ,{\Sigma }})$-norm.
\end{theorem}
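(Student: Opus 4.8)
The plan is to deduce the theorem directly from the existence of the exponential attractor $\mathcal{G}_{\Theta,\Sigma}$ produced in Theorem \ref{m1}, exploiting the classical principle that an exponential attractor always contains the global attractor. First I would record the ingredients already at hand: by Corollary \ref{dyn_system}, $\mathcal{S}(t)$ is a continuous semigroup on $\mathbb{X}^{\infty}(\Omega,\Sigma)$, and Theorem \ref{m1} provides a set $\mathcal{G}_{\Theta,\Sigma}$ that is compact in $\mathbb{X}^2(\Omega,\Sigma)$, bounded in $\mathbb{X}^{\infty}(\Omega,\Sigma)\cap D(\mathcal{A}_{\Theta,\Sigma})$, positively invariant, exponentially attracting for all bounded subsets of $\mathbb{X}^{\infty}(\Omega,\Sigma)$, and of finite fractal dimension.

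Next I would define the candidate attractor as the $\omega$-limit set of the exponential attractor,
\[
\mathbb{A}_{\Theta,\Sigma}:=\omega(\mathcal{G}_{\Theta,\Sigma})=\bigcap_{s\geq 0}\overline{\bigcup_{t\geq s}\mathcal{S}(t)\mathcal{G}_{\Theta,\Sigma}}^{\,\mathbb{X}^2(\Omega,\Sigma)}.
\]
Since $\mathcal{G}_{\Theta,\Sigma}$ is positively invariant, the union $\bigcup_{t\geq 0}\mathcal{S}(t)\mathcal{G}_{\Theta,\Sigma}$ lies in the compact set $\mathcal{G}_{\Theta,\Sigma}$, so $\mathbb{A}_{\Theta,\Sigma}$ is a nonempty, compact (in $\mathbb{X}^2$), fully invariant subset of $\mathcal{G}_{\Theta,\Sigma}$. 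Because $\mathcal{H}^N$ and $\mu_{\Sigma}$ are finite, the embedding $\mathbb{X}^{\infty}\hookrightarrow\mathbb{X}^2$ is continuous, so the exponential attraction in the $\mathbb{X}^{\infty}$-metric transfers to the $\mathbb{X}^2$-metric; by the standard theory of dissipative semigroups this identifies $\mathbb{A}_{\Theta,\Sigma}$ as the unique minimal global attractor. The inclusion $\mathbb{A}_{\Theta,\Sigma}\subseteq\mathcal{G}_{\Theta,\Sigma}$ then yields immediately the boundedness in $\mathbb{X}^{\infty}(\Omega,\Sigma)\cap D(\mathcal{A}_{\Theta,\Sigma})$ and, by monotonicity of the fractal dimension under set inclusion, its finiteness in the $\mathbb{X}^{\infty}(\Omega,\Sigma)$-topology.

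For the structural characterization $\mathbb{A}_{\Theta,\Sigma}=\mathcal{K}_{\Theta,\Sigma\mid t=0}$ I would invoke the standard representation theorem together with the full invariance $\mathcal{S}(t)\mathbb{A}_{\Theta,\Sigma}=\mathbb{A}_{\Theta,\Sigma}$: given $U_0\in\mathbb{A}_{\Theta,\Sigma}$, full invariance lets one choose at each negative time a preimage inside $\mathbb{A}_{\Theta,\Sigma}$, producing a complete trajectory through $U_0$ that remains in the bounded set $\mathbb{A}_{\Theta,\Sigma}$ and hence is a complete bounded strong solution; conversely, any complete bounded trajectory is attracted by $\mathbb{A}_{\Theta,\Sigma}$ in both forward and backward time and therefore lies entirely in it. I expect the only genuinely delicate point to be the reconciliation of the two topologies—compactness being available only in $\mathbb{X}^2$ while attraction and the dimension bound are controlled in $\mathbb{X}^{\infty}$—which is resolved by the continuous embedding $\mathbb{X}^{\infty}\hookrightarrow\mathbb{X}^2$ and the boundedness of $\mathcal{G}_{\Theta,\Sigma}$ in the stronger space.
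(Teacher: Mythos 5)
Your proposal is correct and follows essentially the same route as the paper: the paper offers no separate proof of Theorem \ref{m1b}, stating only that it is an immediate consequence of Theorem \ref{m1} because an exponential attractor always contains the global attractor. Your construction of $\mathbb{A}_{\Theta,\Sigma}$ as $\omega(\mathcal{G}_{\Theta,\Sigma})$, with invariance, attraction, dimension bound inherited from $\mathcal{G}_{\Theta,\Sigma}$, and the characterization by complete bounded trajectories, is precisely the standard fleshing-out of that deduction (with the two-topology issue handled, as you note, by the continuous embedding $\mathbb{X}^{\infty}\hookrightarrow\mathbb{X}^{2}$ and the $\mathbb{X}^{2}$--$\mathbb{X}^{\infty}$ smoothing property already established in the proof of Theorem \ref{m1}).
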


Our construction of an exponential attractor is based on the following
abstract result \cite[Proposition 4.1]{EZ}.

\begin{proposition}
\label{abstract}Let $\mathcal{H}$,$\mathcal{V}$,$\mathcal{V}_{1}$ be Banach
spaces such that the embedding $\mathcal{V}_{1}\hookrightarrow \mathcal{V}$
is compact. Let $\mathbb{B}$ be a closed bounded subset of $\mathcal{H}$ and
let $\mathbb{S}:\mathbb{B}\rightarrow \mathbb{B}$ be a map. Assume also that
there exists a uniformly Lipschitz continuous map $\mathbb{T}:\mathbb{B}%
\rightarrow \mathcal{V}_{1}$, i.e.,%
\begin{equation}
\left\Vert \mathbb{T}b_{1}-\mathbb{T}b_{2}\right\Vert _{\mathcal{V}_{1}}\leq
L\left\Vert b_{1}-b_{2}\right\Vert _{\mathcal{H}},\quad \forall\;
b_{1},b_{2}\in \mathbb{B},  \label{gl1}
\end{equation}%
for some $L\geq 0$, such that%
\begin{equation}
\left\Vert \mathbb{S}b_{1}-\mathbb{S}b_{2}\right\Vert _{\mathcal{H}}\leq
\gamma \left\Vert b_{1}-b_{2}\right\Vert _{\mathcal{H}}+K\left\Vert \mathbb{T%
}b_{1}-\mathbb{T}b_{2}\right\Vert _{\mathcal{V}},\quad \forall\;
b_{1},b_{2}\in \mathbb{B},  \label{gl2}
\end{equation}%
for some constant $0\le \gamma <\frac{1}{2}$ and $K\geq 0$. Then, there
exists a (discrete) exponential attractor $\mathcal{M}_{d}\subset \mathbb{B}$
of the semigroup $\{\mathbb{S}(n):=\mathbb{S}^{n},n\in \mathbb{Z}_{+}\}$
with discrete time in the phase space $\mathcal{H}$, which satisfies the
following properties:

\begin{itemize}
\item semi-invariance: $\mathbb{S}\left( \mathcal{M}_{d}\right) \subset
\mathcal{M}_{d}$;

\item compactness: $\mathcal{M}_{d}$ is compact in $\mathcal{H}$;

\item exponential attraction: $dist_{\mathcal{H}}(\mathbb{S}^{n}\mathbb{B},%
\mathcal{M}_{d})\leq Ce^{-\alpha n},$ for all $n\in \mathbb{N}$ and for some
$\alpha >0$ and $C\geq 0$, where $dist_{\mathcal{H}}$ denotes the standard
Hausdorff semidistance between sets in $\mathcal{H}$;

\item finite-dimensionality: $\mathcal{M}_{d}$ has finite fractal dimension
in $\mathcal{H}$.
\end{itemize}
\end{proposition}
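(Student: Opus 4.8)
The plan is to establish the scale-invariant covering (or ``smoothing'') estimate that underlies every construction of a discrete exponential attractor, and then feed it into the standard Efendiev--Miranville--Zelik iteration of \cite{EZ}. First I would record that $\mathbb{S}$ is itself Lipschitz on $\mathbb{B}$: combining \eqref{gl2} with the continuity of the embedding $\mathcal{V}_{1}\hookrightarrow\mathcal{V}$ and the bound \eqref{gl1} gives $\|\mathbb{S}b_{1}-\mathbb{S}b_{2}\|_{\mathcal{H}}\le(\gamma+KL')\|b_{1}-b_{2}\|_{\mathcal{H}}$, where $L'$ is the Lipschitz constant of $\mathbb{T}$ viewed with values in $\mathcal{V}$. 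In particular $\mathbb{S}$ is continuous, which will be needed for semi-invariance.

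The heart of the argument is the following covering lemma: there exist an integer $N$ and a number $\theta\in(0,1)$, depending only on $\gamma$, $K$, $L$ and the compact embedding $\mathcal{V}_{1}\hookrightarrow\mathcal{V}$ (but not on the radius or the centre), such that for every $b_{0}\in\mathbb{B}$ and every $r>0$ the image $\mathbb{S}\big(B_{\mathcal{H}}(b_{0},r)\cap\mathbb{B}\big)$ can be covered by $N$ balls of radius $\theta r$ in $\mathcal{H}$. To see this I would use \eqref{gl1} to place $\mathbb{T}b-\mathbb{T}b_{0}$ in the ball of radius $Lr$ of $\mathcal{V}_{1}$ for every $b\in B_{\mathcal{H}}(b_{0},r)\cap\mathbb{B}$; after dividing by $Lr$ this lands in the unit ball of $\mathcal{V}_{1}$, which is precompact in $\mathcal{V}$. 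Hence for any $\rho>0$ a fixed number $N=N(\rho)$ of $\mathcal{V}$-balls of radius $\rho$ covers it, with $N$ independent of $r$ and $b_{0}$ by this rescaling. Choosing in each nonempty piece a representative $b_{j}$ and invoking \eqref{gl2} gives
\[
\|\mathbb{S}b-\mathbb{S}b_{j}\|_{\mathcal{H}}\le 2\gamma r+2KL\rho\, r=2(\gamma+KL\rho)r,
\]
so that, since $\gamma<\tfrac12$, I may fix $\rho$ small enough that $\theta:=2(\gamma+KL\rho)<1$; the balls $B_{\mathcal{H}}(\mathbb{S}b_{j},\theta r)$, $j=1,\dots,N$, then do the job. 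This is the step I expect to be the main obstacle, precisely because the uniformity of $N$ and the smallness $\theta<1$ must be extracted \emph{simultaneously} from the compactness of the embedding and from the contraction hypothesis $\gamma<\tfrac12$.

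With the covering lemma in hand the construction is routine. Since $\mathbb{B}$ is bounded I would cover it by a single ball $B_{\mathcal{H}}(b_{*},R_{0})$ and set $E_{0}=\{b_{*}\}$. Applying the lemma inductively to each ball produced at the previous stage yields finite centre sets $E_{n}\subset\mathbb{B}$ with $\#E_{n}\le N^{n}$ and
\[
\mathbb{S}^{n}\mathbb{B}\subset\bigcup_{b\in E_{n}}B_{\mathcal{H}}(b,\theta^{n}R_{0}),\qquad n\ge0 .
\]
I then put $V:=\bigcup_{n\ge0}E_{n}$ and $\mathcal{M}_{d}:=\overline{\bigcup_{n\ge0}\mathbb{S}^{n}V}^{\,\mathcal{H}}$. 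Forward invariance of $\mathbb{B}$ under $\mathbb{S}$ together with its closedness gives $\mathcal{M}_{d}\subset\mathbb{B}$, and continuity of $\mathbb{S}$ gives the semi-invariance $\mathbb{S}\mathcal{M}_{d}\subset\mathcal{M}_{d}$. Because $E_{n}\subset V\subset\mathcal{M}_{d}$, the covering above yields $dist_{\mathcal{H}}(\mathbb{S}^{n}\mathbb{B},\mathcal{M}_{d})\le\theta^{n}R_{0}$, which is the required exponential attraction with rate $\alpha=\ln(1/\theta)>0$.

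It remains to bound the fractal dimension and to deduce compactness. At the scale $\theta^{n}R_{0}$ the sets $E_{0},\dots,E_{n}$ contribute at most $\sum_{k\le n}N^{k}\le CN^{n}$ points, while the tails $\bigcup_{k>n}E_{k}$ and the forward iterates $\mathbb{S}^{j}V$ are, by the covering estimate and the Lipschitz continuity of $\mathbb{S}$, themselves within $\theta^{n}R_{0}$ of this finite set; this is exactly the entropy bookkeeping carried out in \cite[Proposition 4.1]{EZ}. Hence the Kolmogorov $\varepsilon$-entropy satisfies $H_{\theta^{n}R_{0}}(\mathcal{M}_{d})\le Cn\log N$, so $\mathcal{M}_{d}$ is totally bounded, whence compact (being closed in the complete space $\mathcal{H}$), and its fractal dimension is at most $\log N/\log(1/\theta)<\infty$. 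Assembling the four bullet points completes the argument; the only genuinely analytic input is the covering lemma above, the remainder being the standard iteration of \cite{EZ}.
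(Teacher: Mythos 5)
Your proposal is correct: the uniform covering (squeezing) lemma you extract from \eqref{gl1}--\eqref{gl2} and the compact embedding, followed by the iterative construction of the centre sets $E_{n}$, the definition $\mathcal{M}_{d}:=\overline{\bigcup_{n}\mathbb{S}^{n}V}^{\,\mathcal{H}}$, and the entropy bookkeeping, is precisely the standard Efendiev--Zelik argument. Note that the paper itself gives no proof of this statement --- it is quoted verbatim as \cite[Proposition 4.1]{EZ} --- so your reconstruction coincides with the proof in the cited source rather than with anything in the paper, the only point deserving a word of care being that at each stage the representatives $b_{j}$ should be chosen inside $B(b,\theta^{k-1}R_{0})\cap\mathbb{S}^{k-1}\mathbb{B}$ (discarding empty pieces) so that $E_{k}\subset\mathbb{S}^{k}\mathbb{B}$, which is what makes the tail estimate in your dimension count work uniformly.
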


\begin{remark}
\emph{The constants $C$ and $\alpha ,$ and the fractal dimension of $%
\mathcal{M}_{d}$ can be explicitly expressed in terms of $L$, $K$, $\gamma $%
, $\left\Vert \mathbb{B}\right\Vert _{\mathcal{H}}$ (and hence, in terms of
the Sobolev-Poincar\'e constants involved in the previous Poincar\'e
inequalities) and Kolmogorov's $\kappa $-entropy of the compact embedding $%
\mathcal{V}_{1}\hookrightarrow \mathcal{V},$ for some $\kappa =\kappa \left(
L,K,\gamma \right) $. We recall that the Kolmogorov $\kappa $-entropy of the
compact embedding $\mathcal{V}_{1}\hookrightarrow \mathcal{V}$ is the
logarithm of the minimum number of balls of radius $\kappa $ in $\mathcal{V}$
necessary to cover the unit ball of $\mathcal{V}_{1}$.}
\end{remark}

We will prove the main theorem by carrying first a sequence of dissipative
estimates for the strong solution and then applying Proposition \ref%
{abstract} to our situation at the end.

\begin{lemma}
\label{dissipation}Under the assumptions of Theorem \ref{m1}, there exists a
sufficiently large radius $R>0$ independent of time and the initial data,
such that the ball%
\begin{equation}  \label{B}
\mathcal{B}\overset{\text{def}}{=}\left\{ U\in \mathcal{X}=D(\mathcal{A}%
_{\Theta ,{\Sigma }})\cap \mathbb{X}^{\infty }\left( \Omega ,\Sigma \right)
:\left\Vert U\right\Vert _{\mathcal{X}}\leq R\right\} ,
\end{equation}%
is an absorbing set for $\mathcal{S}\left( t\right) $ in $\mathbb{X}^{\infty
}\left( \Omega ,\Sigma \right) $. More precisely, for any bounded set $%
B\subset \mathbb{X}^{\infty }\left( \Omega ,\Sigma \right) $, there exists a
time $t_{\ast }=t_{\ast }(B)>0$ such that $\mathcal{S}\left( t\right)
B\subset \mathcal{B}$, for all $t\geq t_{\ast }.$
\end{lemma}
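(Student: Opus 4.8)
The plan is to reproduce the Moser-type iteration from the proof of Theorem~\ref{global-dyn}, but to make every constant uniform in time and, crucially, to arrange that the final $\mathbb{X}^{\infty}$-bound becomes independent of the initial data as $t\to\infty$; the extra dissipative control needed for this is exactly what the balance condition \eqref{dissip-bala} (i.e.\ \eqref{balance} at $m=1$ with explicit right-hand side) provides. First I would establish a uniform $\mathbb{X}^{2}(\Omega,\Sigma)$-bound. Testing the variational identity \eqref{de_form} with $\xi=U$ and carrying out the Poincar\'e splitting \eqref{bala-1}--\eqref{bala-3} at $m=1$, the gradient remainder produced by Young's inequality is absorbed into $\mathcal{A}_{\Theta,\Sigma}(U,U)$ using the ellipticity constant $d_{0}$, so that the surviving nonlinear contribution is precisely the left-hand side of \eqref{dissip-bala}. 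Estimating it by $\lambda_{\ast}|u|^{2}+C_{f,h}$ and bounding the remaining quadratic form below by the embedding \eqref{embed} yields
\begin{equation*}
\tfrac{1}{2}\frac{d}{dt}E_{1}(t)+(\overline{C}-\lambda_{\ast})E_{1}(t)\leq C,
\end{equation*}
and since $\lambda_{\ast}<\overline{C}$ the coefficient $\overline{C}-\lambda_{\ast}$ is strictly positive. Gronwall's lemma then gives the explicit decay $E_{1}(t)\leq E_{1}(0)e^{-2(\overline{C}-\lambda_{\ast})t}+\rho_{2}^{2}$ with $\rho_{2}^{2}:=C/(\overline{C}-\lambda_{\ast})$ independent of the initial data, so that every bounded $B$ is absorbed into the ball of radius $2\rho_{2}^{2}$ after a finite time $t_{\ast}(B)$; this is the $\mathbb{X}^{2}$-absorbing ball.

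Next I would upgrade this to an $\mathbb{X}^{\infty}$-absorbing ball by iterating \eqref{ee7bis}. The key difference from Theorem~\ref{global-dyn} is that \eqref{ee7bis} already carries the dissipative term $+E_{m_{k}}(t)$ on its left-hand side; comparing the scalar inequality $\frac{d}{dt}E_{m_{k}}+E_{m_{k}}\leq L_{\alpha}(2^{k})E_{m_{k-1}}^{2}$ with its stationary value and passing to the limit superior in $t$ gives $\limsup_{t\to\infty}E_{m_{k}}(t)\leq L_{\alpha}(2^{k})\big(\limsup_{t\to\infty}E_{m_{k-1}}(t)\big)^{2}$, the exponentially decaying homogeneous part erasing the dependence on the value of $E_{m_{k}}$ at the starting time. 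Taking $M_{k}:=\limsup_{t\to\infty}E_{m_{k}}(t)$, starting the recursion from $M_{1}\leq\rho_{2}^{2}$, and running exactly the induction \eqref{ee7}--\eqref{ee8} on $X_{k}:=M_{k}^{1/2^{k}}$ (here the initial-data term $C_{\infty}$ of \eqref{ee7} no longer appears) produces $\limsup_{t\to\infty}\Vert U(t)\Vert_{\mathbb{X}^{\infty}(\Omega,\Sigma)}\leq \rho_{\infty}$ with $\rho_{\infty}$ depending only on the structural data of the problem.

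Finally I would bootstrap into the stronger space $\mathcal{X}=D(\mathcal{A}_{\Theta,\Sigma})\cap\mathbb{X}^{\infty}(\Omega,\Sigma)$ by invoking the smoothing result Theorem~\ref{ap_reg_thm}. Since $A_{\Theta,\Sigma}$ is strongly accretive (as noted in the proof of Theorem~\ref{SG}) and, for $t$ past the absorbing time $t_{\ast}$, the forcing $\mathcal{G}(t)=-F(U(t))=(-f(u),h(u)|_{\Sigma})$ is uniformly bounded in $L^{\infty}((t_{\ast},\infty);\mathbb{X}^{\infty}(\Omega,\Sigma))\cap W^{1,2}((t_{\ast},\infty);\mathbb{X}^{2}(\Omega,\Sigma))$---the latter from $\partial_{t}\mathcal{G}=(-f'(u)\partial_{t}u,h'(u)\partial_{t}u|_{\Sigma})$ together with the $W^{1,\infty}$-in-time regularity \eqref{3.46}---Theorem~\ref{ap_reg_thm} furnishes a uniform bound on $U$ in $L^{\infty}((t_{\ast},\infty);D(A_{\Theta,\Sigma}))$, hence on $\mathcal{A}_{\Theta,\Sigma}(U,U)$, independently of the initial data. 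Choosing $R$ larger than the sum of the $\mathbb{X}^{\infty}$- and $D(A_{\Theta,\Sigma})$-bounds delivers the absorbing ball $\mathcal{B}$. I expect the second step to be the main obstacle: the iteration in Theorem~\ref{global-dyn} only yields a finite-time estimate depending on the initial data through $C_{\infty}$, and the work lies in exploiting the $+E_{m_{k}}$ term at every level $k$ to turn it into a genuinely dissipative, initial-data-independent $\mathbb{X}^{\infty}$-bound.
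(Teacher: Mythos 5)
Your first step (the $\mathbb{X}^{2}$ absorbing ball) is exactly the paper's argument, and your third step is a legitimate alternative to the paper's (the paper applies the uniform Gronwall lemma to the energy identity \eqref{en} together with \eqref{4.4} to obtain \eqref{4.4bis}, rather than invoking Theorem \ref{ap_reg_thm}; if you go your route you must additionally check that the bound produced by Theorem \ref{ap_reg_thm} is quantitative and uniform once its inputs are uniform, since that theorem is stated qualitatively). The genuine problem is your second step. The per-level recursion $M_{k}\leq L_{\alpha}(2^{k})M_{k-1}^{2}$ for $M_{k}:=\limsup_{t\to\infty}E_{m_{k}}(t)$ is indeed correct (the transient $E_{m_{k}}(T)e^{-(t-T)}$ vanishes as $t\to\infty$ for each \emph{fixed} $k$), and it yields $X_{k}=M_{k}^{1/2^{k}}\leq\rho_{\infty}$ for every $k$, i.e. $\limsup_{t\to\infty}\Vert U(t)\Vert_{\mathbb{X}^{2^{k}}}\leq\rho_{\infty}$ for every fixed $k$. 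But your final conclusion $\limsup_{t\to\infty}\Vert U(t)\Vert_{\mathbb{X}^{\infty}}\leq\rho_{\infty}$ is an unjustified exchange of the limits $k\to\infty$ and $t\to\infty$, and that implication is false in general: on a finite measure space take $w(t)=e^{t}\chi_{A_{t}}$ with $|A_{t}|=e^{-t^{2}}$; then $\Vert w(t)\Vert_{L^{p}}=e^{t-t^{2}/p}\to 0$ for every fixed $p$, while $\Vert w(t)\Vert_{L^{\infty}}=e^{t}\to\infty$. The non-uniformity in your scheme is concrete: the transient at level $k$ has size of order $\Vert U(T)\Vert_{\mathbb{X}^{\infty}}^{2^{k}}$, so the time needed for level $k$ to enter its asymptotic regime grows like $2^{k}$; hence at no finite time $t$ do you control $E_{m_{k}}(t)^{1/2^{k}}$ for all (or even infinitely many) $k$ simultaneously, which is what the inequality $\Vert U(t)\Vert_{\mathbb{X}^{\infty}}\leq\liminf_{k}E_{m_{k}}(t)^{1/2^{k}}$ requires. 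Relatedly, your entry times depend on the individual trajectory, whereas absorption must be uniform over bounded sets $B$.

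This is precisely why the paper does not argue with limits superior but proves the smoothing property \eqref{sup-b}, $\sup_{t\geq\tau^{\prime}}\Vert U(t)\Vert_{\mathbb{X}^{\infty}}\leq C(\mu)\sup_{\sigma\geq\tau}\Vert U(\sigma)\Vert_{\mathbb{X}^{2}}$ with $\mu=\tau^{\prime}-\tau$ fixed and finite: all levels of the iteration are run on a common time window via the shifted suprema \eqref{rec_rel}, with time shifts $\mu/2^{k}$ whose sum is $\mu$. There the transient at each level is not killed by sending $t\to\infty$ but is controlled, as in \cite[Theorem 2.3]{Gal0} (cf.\ also \cite{GW}), by integral bounds inherited from the previous level together with the mean value theorem; this is what makes the constant $C(\mu)$ independent of $k$, of $t$, and of the initial data, and the $\mathbb{X}^{\infty}$ absorbing ball then follows by composing \eqref{sup-b} with the $\mathbb{X}^{2}$ absorbing ball. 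To repair your step, replace the $\limsup$ bookkeeping by these windowed suprema and run the induction \eqref{ee7}--\eqref{ee8} on them; the dissipative term $+E_{m_{k}}$ that you rightly single out is what produces \eqref{rec_rel}, but it must be exploited on a window of width $\mu/2^{k}$ at each level, not in the limit $t\to\infty$ at fixed $k$.
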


\begin{proof}
Let $U(t)$ be the unique strong solution of \eqref{p1b}-\eqref{p4b}.
Consider any real numbers $\tau ^{^{\prime }}>\tau >0$ and fix $\mu :=\tau
^{^{\prime }}-\tau $. There exists a positive constant $C=C\left( \mu
\right) \sim \mu ^{-\eta }$ (for some $\eta >0$), independent of $t$ and the
initial data, such that%
\begin{equation}
\sup_{t\geq \tau ^{^{\prime }}}\left\Vert U\left( t\right) \right\Vert _{%
\mathbb{X}^{\infty }\left( \Omega ,\Sigma \right) }\leq C\sup_{\sigma \geq
\tau }\left\Vert U\left( \sigma \right) \right\Vert _{\mathbb{X}^{2}\left(
\Omega ,\Sigma \right) }.  \label{sup-b}
\end{equation}%
Following \cite[Theorem 2.3]{Gal0} (cf. also \cite{GalNS, GW}), (\ref{sup-b}%
) is a consequence of the same recursive inequality for $E_{m_{k}}\left(
t\right) $ from (\ref{ee6b}). Arguing in a similar fashion as in our recent
work \cite{GW}, (\ref{ee6b}) allows us to deduce the following stronger
inequality%
\begin{equation}
\sup_{t\geq t_{k-1}}E_{m_{k}}\left( t\right) \leq C\left( 2^{k}\right)
^{l}\left( \sup_{\sigma \geq t_{k}}E_{m_{k-1}}\left( \sigma \right) \right)
^{2},\text{ \ for all }k\geq 1,  \label{rec_rel}
\end{equation}%
where the sequence $\left\{ t_{k}\right\} _{k\in \mathbb{N}}$ is defined
recursively $t_{k}=t_{k-1}-\mu /2^{k},$ $k\geq 1$, $t_{0}=\tau ^{^{\prime
}}. $ Here we recall that $C=C\left( \mu \right) >0,$ $l>0$ are independent
of $k $ and $C\left( \mu \right) $ is uniformly bounded in $\mu $ if $\mu
\geq 1$ (see \cite[Theorem 2.3]{Gal0}). Iterating in (\ref{rec_rel}) with
respect to $k\geq 1$ we deduce (\ref{sup-b}). Thus, the existence of an
absorbing ball in $\mathbb{X}^{2}\left( \Omega ,\Sigma \right) $ together
with (\ref{sup-b}) gives an absorbing ball for $\mathcal{S}\left( t\right) $
in the space $\mathbb{X}^{\infty }\left( \Omega ,\Sigma \right) $. We now
show how to derive the property in $\mathbb{X}^{2}\left( \Omega ,\Sigma
\right) $ for the semigroup. As in \textit{Step 2 and \eqref{bala-0}-%
\eqref{bala-5}} of the proof of Theorem \ref{global-dyn}, we have
\begin{equation}
\frac{1}{2}\frac{d}{dt}E_{1}\left( t\right) +{\mathcal{A}}_{\Theta ,\Sigma
}(U\left( t\right) ,U\left( t\right) )+\int_{\Omega \backslash \Sigma
}f\left( u\left( t\right) \right) u\left( t\right) dx=\int_{\Sigma }h\left(
u\left( t\right) \right) u\left( t\right) d\mu _{\Sigma }  \label{4.1}
\end{equation}%
and we can estimate%
\begin{align}
& \int_{\Sigma }h\left( u\right) ud\mu _{\Sigma }-\int_{\Omega \backslash
\Sigma }f\left( u\right) udx  \label{4.2} \\
& \leq \int_{\Omega \backslash \Sigma }\left[ -f\left( u\right) u+\frac{\mu
_{\Sigma }\left( \Sigma \right) }{\left\vert \Omega \right\vert }h\left(
u\right) u\right] dx  \notag \\
& +\frac{\left( C_{\Omega ,\Sigma }^{\ast }\right) ^{2}}{4\varepsilon }%
\int_{\Omega \backslash \Sigma }\left( h^{^{\prime }}\left( u\right)
u+h\left( u\right) \right) ^{2}dx+\varepsilon \int_{\Omega \backslash \Sigma
}\left\vert \nabla u\right\vert ^{2}dx,  \notag
\end{align}%
for every $\varepsilon>0$. Moreover,%
\begin{align}
{\mathcal{A}}_{\Theta ,\Sigma }(U,U)& \geq d_{0}\int_{\Omega \backslash
\Sigma }\left\vert \nabla u\right\vert ^{2}dx+\int_{\Sigma }\int_{\Sigma
}K(x,y)(u\left( x\right) -u\left( y\right) )^{2}d\mu _{\Sigma }\left(
x\right) d\mu _{\Sigma }\left( y\right)  \notag  \label{DD} \\
& +\int_{\Sigma }\beta \left( x\right) \left\vert u\right\vert ^{2}d\mu
_{\Sigma },
\end{align}%
owing once again to \eqref{D}. Using \eqref{4.1}, \eqref{4.2}, \eqref{DD}
and recalling \eqref{dissip-bala}, we obtain%
\begin{equation}
\frac{1}{2}\frac{d}{dt}E_{1}\left( t\right) +{\mathcal{A}}_{\Theta ,\Sigma
}(U\left( t\right) ,U\left( t\right) )\leq \lambda _{\ast }\int_{\Omega
\backslash \Sigma }\left\vert u\left( t\right) \right\vert
^{2}dx+\varepsilon \int_{\Omega \backslash \Sigma }\left\vert \nabla u\left(
t\right) \right\vert ^{2}dx+C,  \label{4.3}
\end{equation}%
for some constant $C>0$ which depends only on $\Omega $ and $\,f,h.$ The
embedding \eqref{embed} then yields from \eqref{4.3} that
\begin{equation}
\frac{dE_{1}(t)}{dt}+2\left( \overline{C}-\lambda _{\ast }\right)
E_{1}\left( t\right) \leq C,\;\mbox{ for all }\;t\geq 0.  \label{AB}
\end{equation}%
Integrating \eqref{AB} over $\left( 0,t\right) $ gives that $E_{1}\left(
t\right) \leq E_{1}\left( 0\right) e^{-\eta t}+C,$ with $\eta =\overline{C}%
-\lambda _{\ast }>0$, for some $C>0$ independent of time and initial data.
Moreover, it holds%
\begin{equation}
\int_{t}^{t+1}{\mathcal{A}}_{\Theta ,\Sigma }(U\left( \tau \right) ,U\left(
\tau \right) )d\tau \leq C\left\Vert U_{0}\right\Vert _{\mathbb{X}^{2}\left(
\Omega ,\Sigma \right) }^{2}e^{-\eta t}+C,\text{ for all }t\geq 0.
\label{4.4}
\end{equation}%
Henceforth, the existence of a bounded absorbing ball for the semigroup $%
\mathcal{S}\left( t\right) $ in the space $\mathbb{X}^{2}\left( \Omega
,\Sigma \right) $ (and therefore, in the space $\mathbb{X}^{\infty }\left(
\Omega ,\Sigma \right) $) immediatelly follows. In order to get the
existence of a bounded absorbing set in $D\left( \mathcal{A}_{\Theta ,{%
\Sigma }}\right) $ we argue as follows. Testing (\ref{de_form}) with $\xi
=(\partial _{t}u\left( t\right) ,\partial _{t}u\left( t\right) |_{\Sigma })$
(note that such a test function is allowed by the regularity (\ref{reg_weak}%
) of the solution) we find%
\begin{align}
& \frac{d}{dt}\left( \left\Vert U\left( t\right) \right\Vert _{D\left( {%
\mathcal{A}}_{\Theta ,\Sigma }\right) }^{2}+2\left( \overline{f}\left(
u\left( t\right) \right) ,1\right) _{L^{2}\left( \Omega \backslash \Sigma
\right) }-2\left( \overline{h}\left( u\left( t\right) \right) ,1\right)
_{L^{2}\left( \Sigma ,\mu _{\Sigma }\right) }\right)  \label{en} \\
& =-2\left\Vert \partial _{t}u\left( t\right) \right\Vert _{L^{2}\left(
\Omega \backslash \Sigma \right) }^{2}-2\left\Vert \partial _{t}u\left(
t\right) \right\Vert _{L^{2}\left( \Sigma ,\mu _{\Sigma }\right) }^{2},
\notag
\end{align}%
for $t>0.$ Here and below, $\overline{f}$ and $\overline{h}$ denote the
primitives of $f$ and $h$, respectively, i.e., $\overline{f}\left( \tau
\right) =\int_{0}^{\tau }f\left( y\right) dy$ and $\overline{h}\left( \tau
\right) =\int_{0}^{\tau }h\left( y\right) dy.$ The application of the
uniform Gronwall's lemma (see, e.g., \cite[Lemma III.1.1]{T}) together with (%
\ref{4.4}) and the existence of an absorbing set for $\mathcal{S}\left(
t\right) $ in the space $\mathbb{X}^{\infty }\left( \Omega ,\Sigma \right) $
yields the existence of a time $t_{\ast }=t_{\ast }\left( B\right) $ ($B$ is
any bounded set of initial data contained in $\mathbb{X}^{\infty }\left(
\Omega ,\Sigma \right) $) such that%
\begin{equation}
\sup_{t\geq t_{\ast }}\left( \int_{t}^{t+1}\left( \left\Vert \partial
_{t}u\left( \tau \right) \right\Vert _{L^{2}\left( \Omega \backslash \Sigma
\right) }^{2}+\left\Vert \partial _{t}u\left( \tau \right) \right\Vert
_{L^{2}\left( \Sigma ,\mu _{\Sigma }\right) }^{2}\right) d\tau +{\mathcal{A}}%
_{\Theta ,\Sigma }(U\left( t\right) ,U\left( t\right) )\right) \leq C,
\label{4.4bis}
\end{equation}%
for some constant $C>0$ independent of time and the initial data. This final
estimate implies the existence of a bounded absorbing set in $\mathcal{X}$
and the claim follows.
\end{proof}

Next we carry some estimates for the difference of any two strong solutions,
estimates which will become crucial in the final proof of Theorem \ref{m1}.

\begin{lemma}
\label{L5}Let the assumptions of Theorem \ref{m1} hold, and let $%
U_{1}=\left( u_{1},u_{1}|_{\Sigma }\right) $ and $U_{2}=\left(
u_{2},u_{2}|_{\Sigma }\right) $ be two strong solutions of \eqref{p1b}-\eqref%
{p4b} such that $U_{i}\left( 0\right) \in \mathcal{B}$, $i=1,2$ Then the
following estimates are valid:%
\begin{equation}
\left\Vert U_{1}\left( t\right) -U_{2}\left( t\right) \right\Vert _{\mathbb{X%
}^{2}\left( \Omega ,\Sigma \right) }^{2}\leq M\left\Vert U_{1}\left(
0\right) -U_{2}\left( 0\right) \right\Vert _{\mathbb{X}^{2}\left( \Omega
,\Sigma \right) }^{2}e^{-\omega t}+K\left\Vert U_{1}-U_{2}\right\Vert
_{L^{2}\left( \left(0,t\right);\mathbb{X}^{2}\left( \Omega ,\Sigma \right)
\right) }^{2},  \label{diff1}
\end{equation}%
and%
\begin{align}
& \left\Vert \partial _{t}U_{1}-\partial _{t}U_{2}\right\Vert _{L^{2}(\left(
0,t\right) ;(D\left( {\mathcal{A}}_{\Theta ,\Sigma }\right) ))^{\ast
})}^{2}+\int_{0}^{t}{\mathcal{A}}_{\Theta ,\Sigma }\left( U_{1}\left(
\tau\right) -U_{2}\left( \tau\right) ,U_{1}\left( \tau\right) -U_{2}\left(
\tau\right) \right) d\tau  \label{diff2} \\
& \leq Ce^{\nu t}\left\Vert U_{1}\left( 0\right) -U_{2}\left( 0\right)
\right\Vert _{\mathbb{X}^{2}\left( \Omega ,\Sigma \right) }^{2},  \notag
\end{align}%
for some constants $\omega ,\nu >0,$ $M,K,C\geq 0,$ all independent of $t$
and $U_{i}.$
\end{lemma}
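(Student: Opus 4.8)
The plan is to work with the difference $W:=U_{1}-U_{2}=(w,w|_{\Sigma})$, where $w=u_{1}-u_{2}$, and to exploit that both solutions remain in a fixed bounded set of $\mathbb{X}^{\infty}(\Omega,\Sigma)$. Indeed, since $U_{i}(0)\in\mathcal{B}$, the absorption property of $\mathcal{B}$ (Lemma \ref{dissipation}) together with the global bounds of Theorem \ref{global-dyn} keeps the solutions in a fixed bounded set of $\mathbb{X}^{\infty}(\Omega,\Sigma)$, so there is $R^{\ast}>0$, independent of $t$ and of the particular solutions, with $\sup_{t\geq0}\|U_{i}(t)\|_{\mathbb{X}^{\infty}(\Omega,\Sigma)}\leq R^{\ast}$. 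Because $f,h\in C^{1}_{\mathrm{loc}}(\mathbb{R})$, this yields uniform Lipschitz bounds $|f(u_{1})-f(u_{2})|\leq L|w|$ and $|h(u_{1})-h(u_{2})|\leq L|w|$ on the relevant range, with $L=L(R^{\ast})$; all constants produced below are therefore independent of $t$ and of $U_{i}$. Subtracting the variational identities \eqref{de_form} for $U_{1}$ and $U_{2}$ and noting that $W(t)\in D(\mathcal{A}_{\Theta,\Sigma})$ is an admissible test function, I would test with $\xi=W$ to obtain the energy identity
\[
\tfrac{1}{2}\tfrac{d}{dt}\|W\|_{\mathbb{X}^{2}(\Omega,\Sigma)}^{2}+\mathcal{A}_{\Theta,\Sigma}(W,W)=-\int_{\Omega\backslash\Sigma}(f(u_{1})-f(u_{2}))w\,dx+\int_{\Sigma}(h(u_{1})-h(u_{2}))w|_{\Sigma}\,d\mu_{\Sigma}.
\]

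First I would prove \eqref{diff1}. The key structural fact is the strong accretivity $\mathcal{A}_{\Theta,\Sigma}(W,W)\geq c_{0}\|W\|_{\mathbb{X}^{2}(\Omega,\Sigma)}^{2}$ (a consequence of \eqref{D} and \eqref{beta}, as already used in the proof of Theorem \ref{SG}), while the reaction terms are controlled by $L\|w\|_{L^{2}(\Omega\backslash\Sigma)}^{2}+L\|w\|_{L^{2}(\Sigma,\mu_{\Sigma})}^{2}=L\|W\|_{\mathbb{X}^{2}(\Omega,\Sigma)}^{2}$. This gives the differential inequality $\tfrac{d}{dt}\|W\|^{2}+2c_{0}\|W\|^{2}\leq 2L\|W\|^{2}$. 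Setting $\omega:=2c_{0}$, multiplying by the integrating factor $e^{\omega t}$, integrating over $(0,t)$, then dividing by $e^{\omega t}$ and using $e^{\omega(s-t)}\leq1$ for $s\leq t$, yields exactly \eqref{diff1} with $M=1$ and $K=2L$. The point is that I deliberately do \emph{not} try to absorb the reaction terms into the dissipation (which is impossible since $L$ may exceed $c_{0}$); instead they are retained as the $L^{2}((0,t);\mathbb{X}^{2})$ remainder, which is precisely the lower order term required to verify the abstract smoothing criterion \eqref{gl2} of Proposition \ref{abstract}.

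For \eqref{diff2} I would first integrate the energy identity above over $(0,t)$. Dropping the nonnegative form term and estimating the reaction terms by $L\|W\|_{\mathbb{X}^{2}(\Omega,\Sigma)}^{2}$ gives
\[
\int_{0}^{t}\mathcal{A}_{\Theta,\Sigma}(W(\tau),W(\tau))\,d\tau\leq\tfrac{1}{2}\|W(0)\|_{\mathbb{X}^{2}(\Omega,\Sigma)}^{2}+L\int_{0}^{t}\|W(\tau)\|_{\mathbb{X}^{2}(\Omega,\Sigma)}^{2}\,d\tau.
\]
The crude bound $\|W(t)\|_{\mathbb{X}^{2}}^{2}\leq e^{2Lt}\|W(0)\|_{\mathbb{X}^{2}}^{2}$, obtained from the same identity using only $\mathcal{A}_{\Theta,\Sigma}\geq0$, then controls the time integral, so that $\int_{0}^{t}\mathcal{A}_{\Theta,\Sigma}(W,W)\,d\tau\leq Ce^{\nu t}\|W(0)\|_{\mathbb{X}^{2}}^{2}$ for suitable $\nu,C>0$. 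To bound $\partial_{t}W$ in the dual norm I would read off from the subtracted weak formulation that, for every $\xi\in D(\mathcal{A}_{\Theta,\Sigma})$,
\[
\langle\partial_{t}W,\xi\rangle=-\mathcal{A}_{\Theta,\Sigma}(W,\xi)-\int_{\Omega\backslash\Sigma}(f(u_{1})-f(u_{2}))\xi\,dx+\int_{\Sigma}(h(u_{1})-h(u_{2}))\xi|_{\Sigma}\,d\mu_{\Sigma}.
\]
Using the Cauchy--Schwarz inequality for the form, the estimates $\mathcal{A}_{\Theta,\Sigma}(\xi,\xi)^{1/2}\leq\|\xi\|_{D(\mathcal{A}_{\Theta,\Sigma})}$ and $\|\xi\|_{\mathbb{X}^{2}(\Omega,\Sigma)}\leq\|\xi\|_{D(\mathcal{A}_{\Theta,\Sigma})}$ built into the form graph norm, and the Lipschitz bounds, I get $\|\partial_{t}W\|_{(D(\mathcal{A}_{\Theta,\Sigma}))^{\ast}}\leq\mathcal{A}_{\Theta,\Sigma}(W,W)^{1/2}+C\|W\|_{\mathbb{X}^{2}(\Omega,\Sigma)}$. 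Squaring and integrating in time, both resulting contributions have just been bounded by $Ce^{\nu t}\|W(0)\|_{\mathbb{X}^{2}}^{2}$, which establishes \eqref{diff2}.

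The main obstacle, and the only place real care is needed, is the \emph{uniformity} of the constants $\omega,\nu,M,K,C$ in $t$ and in the solutions $U_{i}$: this is exactly what makes the estimates usable in the attractor construction. It hinges on the uniform-in-time bound $\|U_{i}(t)\|_{\mathbb{X}^{\infty}(\Omega,\Sigma)}\leq R^{\ast}$ coming from the absorption of $\mathcal{B}$, which in turn furnishes $t$- and solution-independent Lipschitz constants for $f$ and $h$. Everything else is a routine combination of coercivity, the Cauchy--Schwarz inequality for the Dirichlet form, and Gronwall's lemma.
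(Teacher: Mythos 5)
Your proof is correct and takes essentially the same approach as the paper's: you use the uniform $\mathbb{X}^{\infty}$ bound coming from the absorbing ball to get solution- and time-independent Lipschitz constants for $f,h$, test the difference equation with $U_{1}-U_{2}$ and combine strong accretivity of ${\mathcal{A}}_{\Theta,\Sigma}$ with an integrating factor/Gronwall argument to obtain \eqref{diff1} and the form integral in \eqref{diff2}, and finally bound $\partial_{t}(U_{1}-U_{2})$ by duality using Cauchy--Schwarz for the form. The only difference is presentational: you spell out the integrating-factor computation and the splitting of the dual-norm estimate into the form term and the $\mathbb{X}^{2}$ term, which the paper compresses into the single estimate $(\partial_{t}U,\xi)_{\mathbb{X}^{2}}\leq C\left\Vert U\right\Vert_{D({\mathcal{A}}_{\Theta,\Sigma})}\left\Vert \xi\right\Vert_{D({\mathcal{A}}_{\Theta,\Sigma})}$.
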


\begin{proof}
Recall that the injection $D\left( {\mathcal{A}}_{\Theta ,\Sigma }\right)
\hookrightarrow\mathbb{X}^{2}\left( \Omega ,\Sigma \right) $ is compact and
continuous. Owing to Lemma \ref{dissipation}, we also have%
\begin{equation}
\sup_{t\geq 0}\left( \left\Vert U_{i}\left( t\right) \right\Vert _{\mathbb{X}%
^{\infty }\left( \Omega ,\Sigma \right) }+{\mathcal{A}}_{\Theta ,\Sigma
}(U_{i}\left( t\right) ,U_{i}\left( t\right) )\right) \leq C=C\left(
\left\Vert U_{i}(0)\right\Vert _{\mathcal{B}}\right) ,\text{ }i=1,2.
\label{4.5}
\end{equation}%
Setting $U:=U_{1}-U_{2}$, in light of Definition \ref{solu} the identity%
\begin{align}
& \int_{\Omega \backslash \Sigma }\partial _{t}u\left( t\right) \xi
dx+\int_{\Sigma }\partial _{t}u\left( t\right) \xi |_{\Sigma }d\mu _{\Sigma
}+\mathcal{A}_{\Theta ,\Sigma }(U\left( t\right) ,\xi )+\int_{\Omega
\backslash \Sigma }\left( f\left( u_{1}\left( t\right) \right) -f\left(
u_{2}\left( t\right) \right) \right) \xi dx  \label{vari} \\
& =\int_{\Sigma }\left( h\left( u_{1}\left( t\right) \right) -h\left(
u_{2}\left( t\right) \right) \right) \xi |_{\Sigma }d\mu _{\Sigma }  \notag
\end{align}%
holds for all $\xi \in D(\mathcal{A}_{\Theta,\Sigma}),$ a.e. $t\in \left(
0,T\right) $. Choosing $\xi =U\left( t\right) $ into (\ref{vari}) and owing
to the uniform bound (\ref{4.5}), we deduce%
\begin{equation*}
\frac{d}{dt}\left\Vert U\left( t\right) \right\Vert _{\mathbb{X}^{2}\left(
\Omega ,\Sigma \right) }^{2}+2\mathcal{A}_{\Theta ,\Sigma }(U\left( t\right)
,U\left( t\right) )\leq C_{f,h}\left( \left\Vert U(0)\right\Vert _{\mathcal{B%
}}\right) \left\Vert U\left( t\right) \right\Vert _{\mathbb{X}^{2}\left(
\Omega ,\Sigma \right) }^{2}
\end{equation*}%
for some constant $C_{f,h}>0$ which depends only on $f,h$ and on the
constant from (\ref{4.5}). Integrating the foregoing inequality in time
entails the desired estimate (\ref{diff1}) and then the estimate%
\begin{equation}
\int_{0}^{t}{\mathcal{A}}_{\Theta ,\Sigma }\left( U\left( \tau\right)
,U\left( \tau\right) \right) d\tau\leq Ce^{\nu t}\left\Vert U_{1}\left(
0\right) -U_{2}\left( 0\right) \right\Vert _{\mathbb{X}^{2}\left( \Omega
,\Sigma \right) }^{2},  \label{diffbis2}
\end{equation}%
owing to the Gronwall inequality and the fact that $\left\Vert U \right\Vert
_{\mathbb{X}^{2}\left( \Omega ,\Sigma \right) }\leq \nu \left\Vert U
\right\Vert _{D\left( {\mathcal{A}}_{\Theta ,\Sigma }\right) }$, for some $%
\nu >0$. Finally, we observe that for any test function $\xi \in D(\mathcal{A%
}_{\Theta,\Sigma})$, the variational identity (\ref{vari}) (which actually
holds a.e. for $t>0$), there holds%
\begin{equation*}
\left(\partial _{t}U\left( t\right) ,\xi \right)_{\mathbb{X}%
^2(\Omega,\Sigma)} =-\mathcal{A}_{\Theta ,\Sigma }\left( U\left( t\right)
,\xi \right) -\left\langle F\left( U_{1}\left( t\right) \right) -F\left(
U_{2}\left( t\right) \right) ,\xi \right\rangle \leq C\left\Vert U\left(
t\right) \right\Vert _{D(\mathcal{A}_{\Theta ,{\Sigma }})}\left\Vert \xi
\right\Vert _{D(\mathcal{A}_{\Theta ,{\Sigma }})},
\end{equation*}%
since $f,h\in C_{\text{loc}}^{1}\left( \mathbb{R}\right) $, owing to (\ref%
{4.5}). This estimate together with (\ref{diffbis2}) gives the desired
control on the time derivative in (\ref{diff2}). The proof is finished.
\end{proof}

The last ingredient we need is the uniform H\"{o}lder continuity of the time
map $t\mapsto \mathcal{S}\left( t\right) U_{0}$ in the $\mathbb{X}^{\infty
}\left( \Omega ,\Sigma \right) $-norm, namely,

\begin{lemma}
\label{time_cont}Let the assumptions of Theorem \ref{m1} be satisfied.
Consider $U\left( t\right) =\mathcal{S}\left( t\right) U_{0}$ with $U_{0}\in
\mathcal{B}$ where $\mathcal{B}$ is given in \eqref{B}. Then the following
estimate holds:%
\begin{equation}
\left\Vert U\left( t\right) -U\left( \tau\right) \right\Vert _{\mathbb{X}%
^{\infty }\left( \Omega ,\Sigma \right) }\leq C\left\vert t-\tau\right\vert
^{\rho },\text{ for all }t,\tau\in (0,T],  \label{4.6}
\end{equation}%
where $\rho <1,C>0$ are independent of $t,\tau$, $U$.
\end{lemma}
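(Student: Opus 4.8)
The plan is to combine the variation-of-constants representation of the strong solution with the smoothing properties of the linear semigroup $(e^{-tA_{\Theta,\Sigma}})_{t\ge 0}$ recorded in Theorem \ref{theo-sg-dy}, together with the uniform bounds on the absorbing ball from Lemma \ref{dissipation}. Fix $U_{0}\in\mathcal B$ and, without loss of generality, $0<\tau<t\le T$; set $r:=t-\tau$ and write $F(U)=(f(u),-h(u)|_{\Sigma})$, so that $U(t)=\mathcal S(t)U_{0}$ solves $\partial_{t}U+A_{\Theta,\Sigma}U=-F(U)$. Using the semigroup from time $\tau$,
\begin{equation*}
U(t)-U(\tau)=\big(e^{-rA_{\Theta,\Sigma}}-I\big)U(\tau)-\int_{0}^{r}e^{-(r-\sigma)A_{\Theta,\Sigma}}F\big(U(\tau+\sigma)\big)\,d\sigma=:\mathrm{(I)}+\mathrm{(II)}.
\end{equation*}
First I would dispose of the Duhamel term $\mathrm{(II)}$. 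By Lemma \ref{dissipation} the orbit stays in a fixed ball of $\mathbb{X}^{\infty}(\Omega,\Sigma)$, and since $f,h\in C^{1}_{\mathrm{loc}}$ this bounds $\|F(U(\tau+\sigma))\|_{\mathbb{X}^{\infty}(\Omega,\Sigma)}$ by a constant $C=C(R)$ uniformly in $\sigma$. Because $(e^{-tA_{\Theta,\Sigma}})_{t\ge0}$ is a contraction on $\mathbb{X}^{\infty}(\Omega,\Sigma)$ (Theorem \ref{theo-sg-dy}(a)), I get $\|\mathrm{(II)}\|_{\mathbb{X}^{\infty}(\Omega,\Sigma)}\le\int_{0}^{r}\|F(U(\tau+\sigma))\|_{\mathbb{X}^{\infty}(\Omega,\Sigma)}\,d\sigma\le Cr$, a Lipschitz-in-time contribution.

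The heart of the matter is the term $\mathrm{(I)}$. Choose $\theta\in(\gamma/4,1)$ so that, by Theorem \ref{theo-sg-dy}(d), $D(A_{\Theta,\Sigma}^{\theta})\hookrightarrow\mathbb{X}^{\infty}(\Omega,\Sigma)$. Since $A_{\Theta,\Sigma}$ is a positive, self-adjoint operator on $\mathbb{X}^{2}(\Omega,\Sigma)$, the spectral theorem and the elementary bound $|1-e^{-r\lambda}|\le(r\lambda)^{\theta'-\theta}$ (valid for $r\lambda\ge0$ and $\theta'-\theta\in(0,1]$) give, for any $\theta'\in(\theta,1]$ and $V\in D(A_{\Theta,\Sigma}^{\theta'})$,
\begin{equation*}
\big\|A_{\Theta,\Sigma}^{\theta}\big(e^{-rA_{\Theta,\Sigma}}-I\big)V\big\|_{\mathbb{X}^{2}(\Omega,\Sigma)}\le Cr^{\theta'-\theta}\,\big\|A_{\Theta,\Sigma}^{\theta'}V\big\|_{\mathbb{X}^{2}(\Omega,\Sigma)}.
\end{equation*}
Applying this with $V=U(\tau)$ and composing with the embedding yields $\|\mathrm{(I)}\|_{\mathbb{X}^{\infty}(\Omega,\Sigma)}\le Cr^{\theta'-\theta}\|A_{\Theta,\Sigma}^{\theta'}U(\tau)\|_{\mathbb{X}^{2}(\Omega,\Sigma)}$. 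Thus everything reduces to a uniform higher-order regularity bound $\sup_{\tau\in(0,T]}\|A_{\Theta,\Sigma}^{\theta'}U(\tau)\|_{\mathbb{X}^{2}(\Omega,\Sigma)}\le C$ for some $\theta'\in(\gamma/4,1]$, uniformly for $U_{0}\in\mathcal B$, which I would extract from Lemma \ref{dissipation}: the orbit is bounded in $D(\mathcal A_{\Theta,\Sigma})=D(A_{\Theta,\Sigma}^{1/2})\cap\mathbb{X}^{\infty}(\Omega,\Sigma)$, and the instantaneous parabolic smoothing inherent in the analyticity of $(e^{-tA_{\Theta,\Sigma}})_{t\ge0}$ on $\mathbb{X}^{2}(\Omega,\Sigma)$ (Theorem \ref{theo-sg-dy}(a)) upgrades this, through the representation $U(\tau)=e^{-\tau A_{\Theta,\Sigma}}U_{0}-\int_{0}^{\tau}e^{-(\tau-s)A_{\Theta,\Sigma}}F(U(s))\,ds$ and the bounds $\|A_{\Theta,\Sigma}^{\theta'}e^{-sA_{\Theta,\Sigma}}\|_{\mathbb{X}^{2}\to\mathbb{X}^{2}}\le Cs^{-\theta'}$. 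Combining $\mathrm{(I)}$ and $\mathrm{(II)}$ then gives \eqref{4.6} with $\rho:=\min\{1,\theta'-\theta\}<1$ and a constant $C$ depending only on $R$, $|\Omega|$, $\mu_{\Sigma}(\Sigma)$, $T$ and the growth of $f,h$.

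I expect the main obstacle to be precisely this uniform regularity step, and in particular its uniformity as $\tau\downarrow0$: in the representation of $U(\tau)$ the Duhamel integral is harmless (integrable for $\theta'<1$, since $\|F(U(s))\|_{\mathbb{X}^{2}}$ is uniformly bounded), but the term $A_{\Theta,\Sigma}^{\theta'}e^{-\tau A_{\Theta,\Sigma}}U_{0}$ carries a factor $\tau^{-(\theta'-1/2)}$ whenever $\theta'>1/2$, reflecting that $\mathcal B$ is only controlled in the form domain $D(A_{\Theta,\Sigma}^{1/2})$. I would absorb this by invoking the full strong-solution regularity $U\in L^{\infty}((\tau_{0},T];D(A_{\Theta,\Sigma}))$ from Theorem \ref{SG} on the absorbing set, so that after the transient $\theta'$ may be taken close to $1$ with a time-uniform bound, and by balancing $r=t-\tau$ against $\tau$ in the two regimes $r\le\tau$ and $r>\tau$. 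Keeping every constant genuinely independent of $t,\tau$ and of $U_{0}\in\mathcal B$ is the delicate point of the argument.
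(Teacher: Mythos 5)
Your variation-of-constants decomposition and the treatment of the Duhamel term (via the $\mathbb{X}^{\infty}$-contractivity of $e^{-tA_{\Theta,\Sigma}}$ and the uniform $\mathbb{X}^{\infty}$ bound on the orbit) are fine, but the core of your argument --- handling $(e^{-rA_{\Theta,\Sigma}}-I)U(\tau)$ through fractional powers and the embedding of Theorem \ref{theo-sg-dy}(d) --- has a gap that cannot be closed in the generality of the lemma. Your scheme forces the chain $\gamma/4<\theta<\theta'\le 1$: the embedding $D(A_{\Theta,\Sigma}^{\theta})\hookrightarrow\mathbb{X}^{\infty}(\Omega,\Sigma)$ requires $\theta>\gamma/4$ with $\gamma=\frac{2d}{d-N+2}$, while the highest uniform regularity a strong solution can offer is $U(\tau)\in D(A_{\Theta,\Sigma})$, i.e. $\theta'=1$ (and even this is more than membership in $\mathcal{B}$ provides, since $\mathcal{B}$ is bounded only in the form domain $D(\mathcal{A}_{\Theta,\Sigma})=D(A_{\Theta,\Sigma}^{1/2})$ intersected with $\mathbb{X}^{\infty}$). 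The condition $\gamma<4$ is equivalent to $d>2(N-2)$, which fails for \emph{every} admissible $d\in(N-2,N)$ as soon as $N\ge 4$, and for $N=3$ fails for all $d\le 2$, including the basic Lipschitz case $d=N-1=2$, where $\gamma=4$ exactly. So outside $N=2$ (and $N=3$ with $d>2$) there is no admissible pair $\theta<\theta'\le1$ at all, and repairing this by taking $\theta'>1$ would require uniform bounds on $\|A_{\Theta,\Sigma}^{\theta'}U(\tau)\|_{\mathbb{X}^{2}(\Omega,\Sigma)}$ beyond the $D(A_{\Theta,\Sigma})$-regularity that the solutions possess.

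There is a second, quantitative, problem even in the favorable range $\gamma<4$: since $\gamma\ge2$ whenever $N\ge2$ (indeed $\gamma\ge 2\iff d\ge d-N+2$), one always has $\theta>\gamma/4\ge 1/2$, and with only the form-domain control of $U_{0}\in\mathcal{B}$ the transient bound is $\|A_{\Theta,\Sigma}^{\theta'}U(\tau)\|_{\mathbb{X}^{2}(\Omega,\Sigma)}\lesssim\tau^{-(\theta'-1/2)}$; in your regime $r=t-\tau\le\tau$ this gives $\|(\mathrm{I})\|_{\mathbb{X}^{\infty}(\Omega,\Sigma)}\lesssim r^{\theta'-\theta}\tau^{-(\theta'-1/2)}\le r^{1/2-\theta}$, whose exponent is negative, so no H\"{o}lder estimate uniform in $t,\tau$ comes out of the balancing you propose. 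The paper's proof avoids fractional powers and the $D(A^{\theta})\hookrightarrow\mathbb{X}^{\infty}$ embedding altogether, which is precisely what makes it dimension-free: (i) by comparison in the variational identity \eqref{de_form}, using the uniform bound \eqref{4.5}, it bounds $\partial_{t}U$ in $L^{2}((0,T);(D(\mathcal{A}_{\Theta,\Sigma}))^{\ast})$, hence $\|U(t)-U(\tau)\|_{(D(\mathcal{A}_{\Theta,\Sigma}))^{\ast}}\le C_T|t-\tau|^{1/2}$; (ii) interpolating this against the uniform form-domain bound yields $\|U(t)-U(\tau)\|_{\mathbb{X}^{2}(\Omega,\Sigma)}\le C_T'|t-\tau|^{1/4}$; (iii) the $\mathbb{X}^{2}$--$\mathbb{X}^{\infty}$ smoothing property \eqref{sup-b}, i.e. the Moser-type iteration of Lemma \ref{dissipation} applied to the difference of two strong solutions (legitimate because both solutions are already bounded in $\mathbb{X}^{\infty}$, so the nonlinearities are subordinate to the linear part), upgrades the $\mathbb{X}^{2}$ H\"{o}lder continuity to the $\mathbb{X}^{\infty}$ estimate \eqref{4.6}. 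That iteration works for every $N$ and every $d\in(N-2,N)\cap(0,N)$, which is exactly why the paper takes this route rather than semigroup fractional calculus.
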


\begin{proof}
Exploiting the bound (\ref{4.5}), by comparison in (\ref{de_form}), we have
as in the proof of Lemma \ref{L5} that%
\begin{equation*}
\int_{0}^{T}\left\Vert \partial _{t}U\left( t\right) \right\Vert _{(D(%
\mathcal{A}_{\Theta ,{\Sigma }}))^{\ast }}^{2}dt\leq C_{T},
\end{equation*}%
for any $T>0$ and $(D(\mathcal{A}_{\Theta,\Sigma}))^{\star}$ denotes the
dual of $D(A_{\Theta,\Sigma})$. This estimate entails the inequality%
\begin{equation}
\left\Vert U\left( t\right) -U\left( \tau\right) \right\Vert _{(D(\mathcal{A}%
_{\Theta ,{\Sigma }}))^{\ast }}\leq C_{T}\left\vert t-\tau\right\vert
^{\frac 12}\text{, for all }t,\tau\in \left[ 0,T\right] .  \label{4.6bis}
\end{equation}%
By a duality argument, (\ref{4.6bis}) and the uniform bound (\ref{4.5})
further yield%
\begin{equation}
\left\Vert U\left( t\right) -U\left( \tau\right) \right\Vert _{\mathbb{X}%
^{2}\left( \Omega ,\Sigma \right) }\leq C_{T}^{^{\prime }}\left\vert
t-\tau\right\vert ^{\frac 14},\text{ for all }t,\tau\in \left[ 0,T\right] .
\label{4.6tris}
\end{equation}%
Inequality (\ref{4.6}) is a consequence of (\ref{4.6tris}) and the $\mathbb{X%
}^{2}$-$\mathbb{X}^{\infty }$ smoothing property (\ref{sup-b}). Indeed, due
to the boundedness of $U\left( t\right) \in \mathbb{X}^{\infty }\left(
\Omega ,\Sigma \right) ,$ a.e. $t\geq 0$, the nonlinearities $f,h$ become
subordinated to the linear part of the equation (\ref{op_v}) no matter how
fast they grow. More precisely, obtaining the $\mathbb{X}^{2}$-$\mathbb{X}%
^{\infty }$\ continuous dependence estimate for the difference $U\left(
t\right) -U\left( \tau\right) $\ of any two strong solutions $U\left(
t\right) ,U\left( \tau\right) $ is actually reduced to the same iteration
procedure leading to (\ref{sup-b}) (cf. the proof of Lemma \ref{dissipation}%
). The proof is completed.
\end{proof}

We can now finish the proof of Theorem \ref{m1}, using the abstract scheme
of Proposition \ref{abstract}.

\begin{proof}[\textbf{Proof of Theorem \protect\ref{m1}}]
First, we construct the exponential attractor $\mathcal{M}_{d}$ of the
discrete map $\mathcal{S}\left( T^{\ast }\right) $ on $\mathcal{B}$ (the
above constructed absorbing ball in the space $\mathcal{X}$ given in %
\eqref{B}), for a sufficiently large $T^{\ast }$. Indeed, let $B_{1}=\left[
\cup _{t\geq T^{\ast }}\mathcal{S}\left( t\right) \mathcal{B}\right] _{%
\mathbb{X}^{2}(\Omega ,\Sigma )}$, where $\left[ \cdot \right] _{\mathbb{X}%
^{2}(\Omega ,\Sigma )}$ denotes the closure in the space $\mathbb{X}%
^{2}(\Omega ,\Sigma )$ and then set $\mathbb{B}:=\mathcal{S}\left( 1\right)
B_{1}$. Thus, $\mathbb{B}$ is a semi-invariant closed but also compact (for
the $\mathbb{X}^{2}(\Omega ,\Sigma )$ -metric) subset of the phase space $%
\mathbb{X}^{\infty }\left( \Omega ,\Sigma \right) $ and $\mathcal{S}\left(
T^{\ast }\right) :\mathbb{B}\rightarrow \mathbb{B}$, provided that $T^{\ast
} $ is large enough. Then, we apply Proposition \ref{abstract} on the set $%
\mathbb{B}$ with $\mathcal{H}=\mathbb{X}^{2}\left( \Omega ,\Sigma \right) $
and $\mathbb{S}=\mathcal{S}\left( T^{\ast }\right) ,$ with $T^{\ast }>0$
large enough so that $Me^{-\omega T^{\ast }}<\frac{1}{2}$ (see (\ref{diff1}%
)). Besides, letting%
\begin{align*}
\mathcal{V}_{1}& =L^{2}(\left( 0,T^{\ast }\right) ;D(\mathcal{A}_{\Theta ,{%
\Sigma }}))\cap W^{1,2}(\left( 0,T^{\ast }\right) ;(D(\mathcal{A}_{\Theta ,{%
\Sigma }}))^{\ast }), \\
\mathcal{V}& =L^{2}(\left( 0,T^{\ast }\right) ;\mathbb{X}^{2}\left( \Omega
,\Sigma \right) ),
\end{align*}%
we have that $\mathcal{V}_{1}\hookrightarrow \mathcal{V}$ is compact (owing
to the compactness of $D(\mathcal{A}_{\Theta ,{\Sigma }})\hookrightarrow
\mathbb{X}^{2}\left( \Omega ,\Sigma \right) \hookrightarrow (D(\mathcal{A}%
_{\Theta ,{\Sigma }}))^{\ast }$). Secondly, define $\mathbb{T}:\mathbb{B}%
\rightarrow \mathcal{V}_{1}$ to be the solving operator for (\ref{p1b})-(\ref%
{p4b}) on the time interval $\left[ 0,T^{\ast }\right] $ such that $\mathbb{T%
}U_{0}:=U\in \mathcal{V}_{1},$ with $U\left( 0\right) =U_{0}\in \mathbb{B}.$
Due to Lemma \ref{L5}, (\ref{diff2}), we have the global Lipschitz
continuity (\ref{gl1}) of $\mathbb{T}$ from $\mathbb{B}$ to $\mathcal{V}_{1}$%
, and (\ref{diff1}) gives us the basic estimate (\ref{gl2}) for the map $%
\mathbb{S}=\mathcal{S}\left( T^{\ast }\right) $. Therefore, the assumptions
of Proposition \ref{abstract} are verified and, consequently, the map $%
\mathbb{S}=\mathcal{S}\left( T^{\ast }\right) $ possesses an exponential
attractor $\mathcal{M}_{d}$ on $\mathbb{B}$. In order to construct the
exponential attractor $\mathcal{G}_{\Theta ,\Sigma }$ for the semigroup $%
\mathcal{S}(t)$ with continuous time, we note that this semigroup is
Lipschitz continuous with respect to the initial data in the topology of $%
\mathbb{X}^{2}(\Omega ,\Sigma )$ (in fact it is also Lipschitz continuous
with respect to the metric topology of $\mathbb{X}^{\infty }(\Omega ,\Sigma
) $, owing to the $\mathbb{X}^{2}$-$\mathbb{X}^{\infty }$ smoothing
property). Moreover, by Lemma \ref{time_cont} the map $\left( t,U_{0}\right)
\mapsto \mathcal{S}\left( t\right) U_{0}$ is also uniformly H\"{o}lder
continuous on $\left[ 0,T^{\ast }\right] \times \mathbb{B}$, where $\mathbb{B%
}$ is endowed with the metric topology of $\mathbb{X}^{\infty }(\Omega
,\Sigma )$. Hence, the desired exponential attractor $\mathcal{G}_{\Theta
,\Sigma }$ for the continuous semigroup $\mathcal{S}(t)$ can be obtained by
the standard formula%
\begin{equation}
\mathcal{G}_{\Theta ,\Sigma }=\bigcup_{t\in \left[ 0,T^{\ast }\right] }%
\mathcal{S}\left( t\right) \mathcal{M}_{d}.  \label{st}
\end{equation}%
Finally, the finite-dimensionality of $\mathcal{G}_{\Theta ,\Sigma }$ in $%
\mathbb{X}^{\infty }(\Omega ,\Sigma )$ follows from the finite
dimensionality of $\mathcal{M}_{d}$ in $\mathbb{X}^{2}(\Omega ,\Sigma )$ and
the $\mathbb{X}^{2}$-$\mathbb{X}^{\infty }$ smoothing property. The
remaining properties of $\mathcal{G}_{\Theta ,\Sigma }$ are also immediate.
Theorem \ref{m1} is now proved.
\end{proof}

\section{Blow-up results}

\label{bl}

The main results of this section deal with blow-up phenomena for the strong
solutions of \eqref{p1b}-\eqref{p4b}. To this end, we define the following
energy functional%
\begin{equation}
E\left( t\right) :=\frac{1}{2}\mathcal{A}_{\Theta ,\Sigma }(U\left( t\right)
,U\left( t\right) )+\left( \overline{f}\left( u\left( t\right) \right)
,1\right) _{L^{2}\left( \Omega \backslash \Sigma \right) }-\left( \overline{h%
}\left( u\left( t\right) \right) ,1\right) _{L^{2}\left( \Sigma ,\mu
_{\Sigma }\right) }  \label{energy}
\end{equation}%
and notice that%
\begin{equation}
E\left( t\right) +\int_{0}^{t}\left( \left\Vert \partial _{t}u\left(
s\right) \right\Vert _{L^{2}\left( \Omega \right) }^{2}+\left\Vert \partial
_{t}u\left( s\right) \right\Vert _{L^{2}\left( \Sigma ,\mu _{\Sigma }\right)
}^{2}\right) ds\leq E\left( 0\right) ,  \label{en-ineq}
\end{equation}%
for as long the strong solution $U$ exists (cf. \eqref{en}) provided that in
addition $U_{0}\in D\left( \mathcal{A}_{\Theta ,\Sigma }\right) $. Recall
that $\overline{f}$ and $\overline{h}$ denote the primitives of $f$ and $h$,
respectively, and that $C_{\Omega ,\Sigma }^{\ast }>0$ is the Poincar\'{e}
constant from \eqref{P-C}. The energy inequality is satisfied for instance
by any strong solution of Theorem \ref{SG} (on some interval $\left(
0,T_{\ast }\right) $) provided that in addition $U_{0}\in D\left( \mathcal{A}%
_{\Theta ,\Sigma }\right) \cap \mathbb{X}^{\infty }\left( \Omega ,\Sigma
\right) $. The validity of \eqref{en-ineq} on some interval on which the
strong solution $U$ exists can be easily checked by first verifying that the
energy identity (that is \eqref{en-ineq} with equality) holds for a sequence
of approximate solutions $U_{n}\in L^{\infty }\left((0,T_{\ast });D\left(
A_{\Theta ,\Sigma }\right) \right) \cap W^{1,2}\left((0,T_{\ast });D\left(
\mathcal{A}_{\Theta ,\Sigma }\right) \right) ,$ uniformly in $n\geq 1$,
associated with a given smooth initial datum $U_{0n}\in D\left( A_{\Theta ,{%
\Sigma }}\right) \cap \mathbb{X}^{\infty }\left( \Omega ,\Sigma \right) $
such that $U_{0n}\rightarrow U_{0}$ strongly in $D\left( \mathcal{A}_{\Theta
,\Sigma }\right) \cap \mathbb{X}^{\infty }\left( \Omega ,\Sigma \right) $.
Integrating the corresponding energy identity for $U_{n}\left( t\right) $ on
the time interval $\left( 0,t\right) $ and exploiting standard convergence
results for these approximate solutions, together with the weak
lower-semicontinuity of the form $\mathcal{A}_{\Theta ,\Sigma }$, we can
easily infer \eqref{en-ineq}. It turns out that the validity of %
\eqref{en-ineq} is sufficient for our goals below. In particular, we will
show that every strong solution $U$ of problem \eqref{p1b}-\eqref{p4b} that
obeys the energy inequality \eqref{en-ineq} must blow-up in finite time
under some general conditions on the nonlinearities.

\begin{theorem}
\label{theo2}Assume that $\Omega \backslash \Sigma $ has the $\widetilde{W}%
^{1,2}$-extension property and the hypotheses of Lemma \ref{poincare-weak}
are satisfied. Let $U$ be a (local) strong solution of \eqref{p1b}-%
\eqref{p4b} in the sense of Theorem \ref{SG} for some initial datum $%
U_{0}\in D\left( \mathcal{A}_{\Theta ,\Sigma }\right) \cap \mathbb{X}%
^{\infty }\left( \Omega ,\Sigma \right) $. Let $\alpha >2$ and define $%
g_{\alpha }\left( \tau \right) =-f\left( \tau \right) \tau +\alpha \overline{%
f}\left( \tau \right) $ and $l_{\alpha }\left( \tau \right) =-h\left( \tau
\right) \tau +\alpha \overline{h}\left( \tau \right) $ for $\tau \in \mathbb{%
R}$. Suppose there exist $\varepsilon \in \left( 0,\left( \alpha /2-1\right)
d_{0}\right) $ and constants $C_{1}>0,C_{2}\geq 0$ such that%
\begin{equation}
-g_{\alpha }\left( \tau \right) +\frac{\mu _{\Sigma }\left( \Sigma \right) }{%
\left\vert \Omega \right\vert }l_{\alpha }\left( \tau \right) -\frac{\left(
C_{\Omega ,\Sigma }^{\ast }\right) ^{2}}{4\varepsilon }\left( l_{\alpha
}^{^{\prime }}\left( \tau \right) \right) ^{2}\geq C_{1}\tau ^{2}-C_{2},%
\text{ for all }\tau \in \mathbb{R}\text{.}  \label{blow-bala}
\end{equation}%
Then there exist two constants $D_{1}>0,D_{2}>0$ (depending only on $\mu
_{\Sigma }\left( \Sigma \right) ,\left\vert \Omega \right\vert ,\varepsilon
,C_{1},C_{2},d_{0}$ and $\alpha $) such that for $U_{0}$ satisfying%
\begin{equation}
D_{1}\left\Vert U_{0}\right\Vert _{\mathbb{X}^{2}\left( \Omega ,\Sigma
\right) }^{2}>\alpha E\left( 0\right) +D_{2},  \label{ini}
\end{equation}%
the strong solution $U$ of \eqref{p1b}-\eqref{p4b} blows up in finite time.
\end{theorem}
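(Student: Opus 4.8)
The plan is to use the classical concavity (Levine) method applied to the energy inequality \eqref{en-ineq}. I would introduce the auxiliary functional
\begin{equation*}
\Phi(t):=\int_0^t\left(\|u(s)\|_{L^2(\Omega\backslash\Sigma)}^2+\|u(s)\|_{L^2(\Sigma,\mu_\Sigma)}^2\right)ds+C_0
=\int_0^t\|U(s)\|_{\mathbb{X}^2(\Omega,\Sigma)}^2\,ds+C_0,
\end{equation*}
for a suitable constant $C_0\ge 0$ to be fixed, and show that $\Phi^{-\gamma}$ is concave for some $\gamma>0$, forcing $\Phi$ to become infinite in finite time. First I would compute $\Phi'(t)=\|U(t)\|_{\mathbb{X}^2(\Omega,\Sigma)}^2$ and then differentiate once more; by the variational identity \eqref{de_form} tested with $\xi=U(t)$ (allowed by \eqref{reg_weak}), one gets
\begin{equation*}
\tfrac12\Phi''(t)=-\mathcal{A}_{\Theta,\Sigma}(U,U)-\int_{\Omega\backslash\Sigma}f(u)u\,dx+\int_\Sigma h(u)u\,d\mu_\Sigma.
\end{equation*}
The standard Levine trick is to add and subtract $\alpha E(t)$: using the definitions of $g_\alpha,l_\alpha$ one rewrites
\begin{equation*}
\tfrac12\Phi''(t)=(\alpha-2)\,\tfrac12\mathcal{A}_{\Theta,\Sigma}(U,U)-\alpha E(t)+\int_{\Omega\backslash\Sigma}g_\alpha(u)\,dx-\int_\Sigma l_\alpha(u)\,d\mu_\Sigma.
\end{equation*}

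Next I would exploit the energy inequality \eqref{en-ineq}, which gives $-\alpha E(t)\ge -\alpha E(0)+\alpha\int_0^t\|\partial_t U\|_{\mathbb{X}^2}^2\,ds$, so the dissipation term is available with a favourable sign. The crucial step is to control the two integral terms from below by $C_1\|U\|_{\mathbb{X}^2}^2$ minus constants. This is exactly where the balance hypothesis \eqref{blow-bala} enters: arguing as in \eqref{bala-1}--\eqref{bala-3} of the proof of Theorem \ref{global-dyn}, I would split $\int_\Sigma l_\alpha(u)\,d\mu_\Sigma$ using the Poincar\'e inequality of Lemma \ref{poincare-weak} to transfer it to the bulk, producing the gradient term $\varepsilon\|\nabla u\|_{L^2}^2$ together with the quadratic term in $l_\alpha'(\tau)=-h'(\tau)\tau+(\alpha-1)h(\tau)$ weighted by $(C_{\Omega,\Sigma}^\ast)^2/(4\varepsilon)$. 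Since $\varepsilon<(\alpha/2-1)d_0$, the gradient term is absorbed by the positive form contribution $(\alpha-2)\tfrac12\mathcal{A}_{\Theta,\Sigma}(U,U)\ge(\alpha/2-1)d_0\|\nabla u\|_{L^2}^2$, and \eqref{blow-bala} then yields
\begin{equation*}
\tfrac12\Phi''(t)\ge -\alpha E(0)+\alpha\int_0^t\|\partial_t U\|_{\mathbb{X}^2}^2\,ds+C_1\|U(t)\|_{\mathbb{X}^2}^2-C_2',
\end{equation*}
for a modified constant $C_2'$.

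Finally I would set up the concavity differential inequality. Writing $\Psi(t):=\int_0^t\|\partial_t U\|_{\mathbb{X}^2}^2\,ds$ and using the Cauchy--Schwarz inequality $\Phi'(t)-\Phi'(0)=2\int_0^t(U,\partial_t U)_{\mathbb{X}^2}\,ds$, one obtains the standard estimate $(\Phi'(t))^2\le C\,\Phi(t)\,\Psi(t)$ up to lower-order terms. Combining with the lower bound on $\Phi''$ and choosing $\gamma>0$ so that the factor $\alpha$ multiplying $\Psi$ dominates, the differential inequality $\Phi\Phi''-(1+\gamma)(\Phi')^2\ge 0$ holds once $\Phi(0)$ is large enough, i.e. once the initial data satisfies \eqref{ini}; this is where the constants $D_1,D_2$ are extracted, with $D_1$ essentially $C_1$ and $D_2$ collecting $C_2'$ and the energy offset. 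The function $\Phi^{-\gamma}$ is then concave and positive with negative derivative at some time, so it reaches zero in finite time, forcing $\Phi(t)\to\infty$ and hence blow-up of $\|U(t)\|_{\mathbb{X}^2(\Omega,\Sigma)}$, which by the smoothing/blow-up alternative of Theorem \ref{SG} means $T_{\max}<\infty$. The main obstacle I anticipate is the bookkeeping in the Poincar\'e-splitting step: one must verify that the sign condition \eqref{blow-bala}, phrased in terms of $l_\alpha'$, matches exactly the quadratic remainder produced by Young's inequality after the Poincar\'e estimate, and that the threshold $\varepsilon<(\alpha/2-1)d_0$ is precisely what makes the gradient absorption work — getting these constants to line up so that the coefficient of $\|U\|_{\mathbb{X}^2}^2$ is exactly $C_1>0$ is the delicate part, while the concavity argument itself is routine.
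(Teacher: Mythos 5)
Your strategy is, in outline, exactly the paper's proof: the paper sets $G(t)=\tfrac12\Vert U(t)\Vert_{\mathbb{X}^{2}(\Omega,\Sigma)}^{2}$ (i.e.\ $\tfrac12\Phi'(t)$ in your notation), performs the same Levine rewriting with $\alpha E(t)$, invokes the energy inequality \eqref{en-ineq}, transfers the interfacial integral to the bulk via Lemma \ref{poincare-weak} precisely as in \eqref{bala-1}--\eqref{bala-3}, absorbs the gradient remainder using $\varepsilon<(\alpha/2-1)d_{0}$, derives a differential inequality $G'\geq D_{1}G-(D_{2}+\alpha E(0))$, and concludes with the same concavity contradiction for $M(t)=\int_{0}^{t}\Vert U(\tau)\Vert_{\mathbb{X}^{2}(\Omega,\Sigma)}^{2}\,d\tau$. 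So there is no methodological difference between your proposal and the paper.

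The genuine problem sits exactly at the step you flagged as delicate, and it does not close as you wrote it. Your expansion
\begin{equation*}
\tfrac12\Phi''(t)=\bigl(\tfrac{\alpha}{2}-1\bigr)\mathcal{A}_{\Theta,\Sigma}(U,U)-\alpha E(t)+\int_{\Omega\backslash\Sigma}g_{\alpha}(u)\,dx-\int_{\Sigma}l_{\alpha}(u)\,d\mu_{\Sigma}
\end{equation*}
is the correct one (test it with $h\equiv0$: it reduces to $-\int f(u)u\,dx=-\alpha\int\overline{f}(u)\,dx+\int(\alpha\overline{f}(u)-f(u)u)\,dx$). Consequently, after the Poincar\'{e} transfer the integrand you must bound from below is $g_{\alpha}(\tau)-\frac{\mu_{\Sigma}(\Sigma)}{|\Omega|}l_{\alpha}(\tau)-\frac{(C_{\Omega,\Sigma}^{\ast})^{2}}{4\varepsilon}(l_{\alpha}'(\tau))^{2}$, which is \emph{not} the left-hand side of \eqref{blow-bala}; the hypothesis controls $-g_{\alpha}+\frac{\mu_{\Sigma}(\Sigma)}{|\Omega|}l_{\alpha}-\frac{(C_{\Omega,\Sigma}^{\ast})^{2}}{4\varepsilon}(l_{\alpha}')^{2}$, and the two conditions are even incompatible for large $|\tau|$, since their left-hand sides sum to $-\frac{(C_{\Omega,\Sigma}^{\ast})^{2}}{2\varepsilon}(l_{\alpha}')^{2}\leq0$ and so cannot both dominate $C_{1}\tau^{2}-C_{2}$. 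Concretely, in the model blow-up case $f(\tau)=-|\tau|^{q}\tau$, $h\equiv0$, $2<\alpha<q+2$, one has $g_{\alpha}(\tau)=(1-\tfrac{\alpha}{q+2})|\tau|^{q+2}>0$, so the condition your (correct) identity requires holds, while \eqref{blow-bala} as literally stated fails. The explanation is that the paper's own display \eqref{4.7} carries the mirror-image sign slip: it writes $+\int_{\Sigma}l_{\alpha}\,d\mu_{\Sigma}-\int_{\Omega\backslash\Sigma}g_{\alpha}\,dx$, which is inconsistent with the stated definitions $g_{\alpha}=-f(\tau)\tau+\alpha\overline{f}$, $l_{\alpha}=-h(\tau)\tau+\alpha\overline{h}$, so the paper's hypothesis and the paper's computation match each other only after replacing $(g_{\alpha},l_{\alpha})$ by $(-g_{\alpha},-l_{\alpha})$. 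To complete your argument you must make the same reconciliation: either redefine $g_{\alpha}:=f(\tau)\tau-\alpha\overline{f}(\tau)$, $l_{\alpha}:=h(\tau)\tau-\alpha\overline{h}(\tau)$ and keep \eqref{blow-bala}, or keep the stated definitions and replace \eqref{blow-bala} by $g_{\alpha}-\frac{\mu_{\Sigma}(\Sigma)}{|\Omega|}l_{\alpha}-\frac{(C_{\Omega,\Sigma}^{\ast})^{2}}{4\varepsilon}(l_{\alpha}')^{2}\geq C_{1}\tau^{2}-C_{2}$; as written, your assertion that \eqref{blow-bala} ``matches exactly the quadratic remainder'' is false. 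A secondary bookkeeping point: the balance condition only produces $C_{1}\int_{\Omega\backslash\Sigma}|u|^{2}\,dx$, not $C_{1}\Vert U\Vert_{\mathbb{X}^{2}}^{2}$; the interfacial half of the $\mathbb{X}^{2}$-norm needed in the differential inequality must be recovered from the leftover coercive piece $\bigl((\tfrac{\alpha}{2}-1)-\tfrac{\varepsilon}{d_{0}}\bigr)\mathcal{A}_{\Theta,\Sigma}(U,U)$ (this is where $\beta\geq\beta_{0}>0$ enters), as the paper does; hence $D_{1}$ is not ``essentially $C_{1}$'' but also involves that coercivity constant.
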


\begin{proof}
Since the set $\Omega \backslash \Sigma $ is generally "rough", we will
obtain the result by exploiting an energy method and the concavity method
due to Levine and Payne \cite{LP}. With this technique we will prove that
some strong solutions of \eqref{p1b}-\eqref{p4b} must cease to exist in
finite time, since otherwise the $\mathbb{X}^{2}\left( \Omega ,\Sigma
\right) $-norm must become infinite in finite time. To this end let us
define
\begin{equation*}
G\left( t\right) =\frac{1}{2}\int_{\Omega \backslash \Sigma }\left\vert
u\left( t\right) \right\vert ^{2}dx+\frac{1}{2}\int_{\Sigma }\left\vert
u\left( t\right) \right\vert ^{2}d\mu _{\Sigma }.
\end{equation*}%
The starting point is the energy identity (\ref{4.1}), which can be
rewritten using the energy $E\left( t\right) $ (see (\ref{energy})) as
follows:%
\begin{align}
G^{^{\prime }}\left( t\right) =&-\alpha E\left( t\right) +\left( \frac{%
\alpha }{2}-1\right) \mathcal{A}_{\Theta ,\Sigma }(U,U)  \label{4.7} \\
& +\int_{\Sigma }\left( \alpha \overline{h}\left( u\right) -h\left( u\right)
u\right) d\mu _{\Sigma }-\int_{\Omega \backslash \Sigma }\left( \alpha
\overline{f}\left( u\right) -f\left( u\right) u\right) dx  \notag \\
\geq &\alpha \int_{0}^{t}\left\Vert \partial _{t}U\left( s\right)
\right\Vert _{\mathbb{X}^{2}\left( \Omega ,\Sigma \right) }^{2}ds+\left(
\frac{\alpha }{2}-1\right) \mathcal{A}_{\Theta ,\Sigma }(U,U)  \notag \\
& +\int_{\Sigma }l_{\alpha }\left( u\right) d\mu _{\Sigma }-\int_{\Omega
\backslash \Sigma }g_{\alpha }\left( u\right) dx-\alpha E\left( 0\right) .
\notag
\end{align}%
for $t\geq 0$, owing to (\ref{en-ineq}). Now we estimate the nonlinear terms
on the right-hand side of (\ref{4.7}). Exactly as in (\ref{bala-1}) we have%
\begin{align}
\int_{\Sigma }l_{\alpha }\left( u\right) d\mu _{\Sigma }-\int_{\Omega
\backslash \Sigma }g_{\alpha }\left( u\right) dx& =\int_{\Omega \backslash
\Sigma }\left( -g_{\alpha }\left( u\right) +\frac{\mu _{\Sigma }\left(
\Sigma \right) }{\left\vert \Omega \right\vert }l_{\alpha }\left( u\right)
\right) dx  \label{4.8} \\
& +\frac{\mu _{\Sigma }\left( \Sigma \right) }{\left\vert \Omega \right\vert
}\int_{\Omega \backslash \Sigma }\left( l_{\alpha }\left( u\right) -\frac{1}{%
\mu _{\Sigma }\left( \Sigma \right) }\int_{\Sigma }l_{\alpha }\left(
u\right) d\mu _{\Sigma }\right) dx.  \notag
\end{align}%
We can apply the Poincar\'{e} inequality of Lemma \ref{poincare-weak}
yielding%
\begin{align}
& \left\vert \frac{\mu _{\Sigma }\left( \Sigma \right) }{\left\vert \Omega
\right\vert }\int_{\Omega \backslash \Sigma }\left( l_{\alpha }\left(
u\right) -\frac{1}{\mu _{\Sigma }\left( \Sigma \right) }\int_{\Sigma
}l_{\alpha }\left( u\right) d\mu _{\Sigma }\right) dx\right\vert  \label{4.9}
\\
& \leq C_{\Omega ,\Sigma }^{\ast }\left\Vert \nabla \left( l_{\alpha }\left(
u\right) \right) \right\Vert _{L^{1}\left( \Omega \backslash \Sigma \right)
}=C_{\Omega ,\Sigma }^{\ast }\left\Vert l_{\alpha }^{^{\prime }}\left(
u\right) \nabla u\right\Vert _{L^{1}\left( \Omega \backslash \Sigma \right) }
\notag \\
& \leq \varepsilon \left\Vert \nabla u\right\Vert _{L^{2}\left( \Omega
\backslash \Sigma \right) }^{2}+\frac{\left( C_{\Omega ,\Sigma }^{\ast
}\right) ^{2}}{4\epsilon }\left\Vert l_{\alpha }^{^{\prime }}\left( u\right)
\right\Vert _{L^{2}\left( \Omega \backslash \Sigma \right) }^{2},  \notag
\end{align}%
for every $\varepsilon \in \left( 0,\left( \alpha /2-1\right) d_{0}\right) .$
Inserting \eqref{4.9} into \eqref{4.8} gives%
\begin{align*}
& \int_{\Sigma }l_{\alpha }\left( u\right) d\mu _{\Sigma }-\int_{\Omega
\backslash \Sigma }g_{\alpha }\left( u\right) dx \\
& \geq \int_{\Omega \backslash \Sigma }\left( -g_{\alpha }\left( u\right) +%
\frac{\mu _{\Sigma }\left( \Sigma \right) }{\left\vert \Omega \right\vert }%
l_{\alpha }\left( u\right) -\frac{\left( C_{\Omega ,\Sigma }^{\ast }\right)
^{2}}{4\varepsilon }\left( l_{\alpha }^{^{\prime }}\left( u\right) \right)
^{2}\right) dx-\frac{\varepsilon }{d_{0}}\int_{\Omega \backslash \Sigma
}\left\vert \mathbf{D}\nabla u\right\vert ^{2}dx
\end{align*}%
so that \eqref{4.7} now reads%
\begin{align}
G^{^{\prime }}\left( t\right) \geq & \alpha \int_{0}^{t}\left\Vert \partial
_{t}U\left( \tau\right) \right\Vert _{\mathbb{X}^{2}\left( \Omega ,\Sigma
\right) }^{2}d\tau+\frac{1}{d_{0}}\left( \left( \frac{\alpha }{2}-1\right)
d_{0}-\varepsilon \right) \mathcal{A}_{\Theta ,\Sigma }(U,U)  \label{4.10} \\
& +\int_{\Omega \backslash \Sigma }\left( -g_{\alpha }\left( u\right) +\frac{%
\mu _{\Sigma }\left( \Sigma \right) }{\left\vert \Omega \right\vert }%
l_{\alpha }\left( u\right) -\frac{\left( C_{\Omega ,\Sigma }^{\ast }\right)
^{2}}{4\varepsilon }\left( l_{\alpha }^{^{\prime }}\left( u\right) \right)
^{2}\right) dx-\alpha E\left( 0\right)  \notag
\end{align}%
for $t\geq 0$. From \eqref{blow-bala} we get%
\begin{equation*}
\int_{\Omega \backslash \Sigma }\left( -g_{\alpha }\left( u\right) +\frac{%
\mu _{\Sigma }\left( \Sigma \right) }{\left\vert \Omega \right\vert }%
l_{\alpha }\left( u\right) -\frac{\left( C_{\Omega ,\Sigma }^{\ast }\right)
^{2}}{4\varepsilon }\left( l_{\alpha }^{^{\prime }}\left( u\right) \right)
^{2}\right) dx\geq C_{1}\int_{\Omega \backslash \Sigma }\left\vert
u\right\vert ^{2}dx-C_{2}\left\vert \Omega \right\vert
\end{equation*}%
and using the fact that the injection $D\left( \mathcal{A}_{\Theta ,\Sigma
}\right) \hookrightarrow \mathbb{X}^{2}\left( \Omega ,\Sigma \right) $ is
continuous with a constant $\widetilde{C}_{\Omega ,\Sigma }>0$, from (\ref%
{4.10}) we obtain that $G\left( t\right) $ satisfies the initial value
problem for the differential inequality
\begin{equation}
\left\{
\begin{array}{ll}
G^{^{\prime }}\left( t\right) \geq D_{1}G\left( t\right) -\left(
D_{2}+\alpha E\left( 0\right) \right) , & \text{for }t\geq 0 \\
G\left( 0\right) =2\left\Vert U_{0}\right\Vert _{\mathbb{X}^{2}\left( \Omega
,\Sigma \right) }^{2}, &
\end{array}%
\right.  \label{4.11}
\end{equation}%
with%
\begin{equation*}
D_{1}=2\left( \frac{1}{d_{0}}\left( \left( \frac{\alpha }{2}-1\right)
d_{0}-\varepsilon \right) \widetilde{C}_{\Omega ,\Sigma }+C_{1}\right) >0,%
\text{ }D_{2}=C_{2}\left\vert \Omega \right\vert >0.
\end{equation*}%
Let us now define $H\left( U\left( t\right) \right) :=D_{1}G\left( t\right)
-\left( D_{2}+\alpha E\left( 0\right) \right) $ and observe that (\ref{ini})
is equivalent to $H\left( U_{0}\right) >0,$ in which case from (\ref{4.11})
we deduce that $H\left( U\left( t\right) \right) >0$ for all $t\geq 0$ (for
as long as the solution exists) and the function $G\left( t\right) $ grows
at least exponentially fast. Let us now employ a contradiction argument
similar to arguments used in \cite{LP}. Let us suppose that the strong
solution $U$ is defined for all times $t>0$. Then from (\ref{4.10}) we see
that%
\begin{equation}
G^{^{\prime }}\left( t\right) \geq \alpha \int_{0}^{t}\left\Vert \partial
_{t}U\left( \tau \right) \right\Vert _{\mathbb{X}^{2}\left( \Omega ,\Sigma
\right) }^{2}d\tau +H\left( U\left( t\right) \right) >\alpha
\int_{0}^{t}\left\Vert \partial _{t}U\left( \tau \right) \right\Vert _{%
\mathbb{X}^{2}\left( \Omega ,\Sigma \right) }^{2}d\tau .  \label{4.12}
\end{equation}%
Next, denote by $M\left( t\right) :=\int_{0}^{t}\left\Vert U\left( \tau
\right) \right\Vert _{\mathbb{X}^{2}\left( \Omega ,\Sigma \right) }^{2}d\tau
$ so that (\ref{4.12}) implies that%
\begin{equation}
M^{^{\prime \prime }}\left( t\right) >2\alpha \int_{0}^{t}\left\Vert
\partial _{t}U\left( \tau \right) \right\Vert _{\mathbb{X}^{2}\left( \Omega
,\Sigma \right) }^{2}d\tau .  \label{M}
\end{equation}%
Multiplying \eqref{M} by $M\left( t\right) $ and applying the Cauchy-Schwarz
inequality we derive%
\begin{equation*}
M^{^{\prime \prime }}\left( t\right) M\left( t\right) >\frac{\alpha }{2}%
\left( M^{^{\prime }}\left( t\right) -M\left( 0\right) \right) ^{2}.
\end{equation*}%
Now since $M^{^{\prime }}\left( t\right) =2G\left( t\right) \rightarrow
\infty $ as time $t\rightarrow \infty $ and $\alpha >2$, there exists a
sufficiently small $\varepsilon \in \left( 0,\alpha /2\right) $ such that
for large time $t>0$,
\begin{equation*}
M^{^{\prime \prime }}\left( t\right) M\left( t\right) >\left( \frac{\alpha }{%
2}-\varepsilon \right) \left( M^{^{\prime }}\left( t\right) \right) ^{2}.
\end{equation*}%
This inequality yields that $M^{-\varepsilon +1-\alpha /2}\left( t\right) >0$
is a concave function for large time $t>0$ but this is impossible since $%
M^{-\varepsilon +1-\alpha /2}\left( t\right) \rightarrow 0$ as $t\rightarrow
\infty $. Hence, the solution $U$ must blow-up in finite time. The proof is
finished.
\end{proof}

The following result shows that in the case of polynomial nonlinearities
with a \emph{good dissipative source} $h$ of \emph{arbitrary growth} along
the sharp interface $\Sigma $ but with a bulk source $f$ with \emph{bad sign}
at infinity, blow-up of some strong solutions still occurs provided that $h$
is dominated by $f$. In some sense, this result is in contrast to the result
obtained in Corollary \ref{example-C} which asserts the global existence of
solutions with a \emph{bad dissipative source} $h$ of \emph{arbitrary growth}
along the interface $\Sigma $ but with a bulk source $f$ with \emph{good sign%
} at infinity.

\begin{corollary}
Let the assumptions of Theorem \ref{SG} and Lemma \ref{poincare-weak} be
satisfied. Suppose that
\begin{equation*}
\lim_{\left\vert \tau \right\vert \rightarrow \infty }\frac{h^{^{\prime
}}\left( \tau \right) }{\left\vert \tau \right\vert ^{p}}=\left( p+1\right)
c_{h}\text{ and }\lim_{\left\vert \tau \right\vert \rightarrow \infty }\frac{%
f^{^{\prime }}\left( \tau \right) }{\left\vert \tau \right\vert ^{q}}=\left(
q+1\right) c_{f}
\end{equation*}%
with $c_{h}<0,c_{f}<0$ for some $p,q\geq 0$. We have the following cases:

\begin{enumerate}
\item Let $\alpha \in \left( p+2,q+2\right) $ and $U_{0}\in D\left( \mathcal{%
A}_{\Theta ,\Sigma }\right) \cap \mathbb{X}^{\infty }\left( \Omega ,\Sigma
\right) $ such that \eqref{ini} is satisfied, and assume $q>2p.$

\item Under the same assumption on the initial datum, assume $q=2p$ and%
\begin{equation*}
-c_{f}\left( 1-\frac{\alpha }{q+2}\right) >\frac{\left( C_{\Omega ,\Sigma
}^{\ast }\right) ^{2}c_{h}^{2}\left( p+2-\alpha \right) ^{2}}{4\varepsilon },
\end{equation*}%
for some $\varepsilon \in \left( 0,\left( \alpha /2-1\right) d_{0}\right) .$

\item Let $\alpha \in \left( 2,q+2\right) $ with $U_{0}\in D\left( \mathcal{A%
}_{\Theta ,\Sigma }\right) \cap \mathbb{X}^{\infty }\left( \Omega ,\Sigma
\right) $ satisfying \eqref{ini} and assume $q>2p.$
\end{enumerate}

Then, in each case, the strong solution $U\left( t\right) $ associated with
the corresponding initial datum $U_{0}$ blows up in finite time.
\end{corollary}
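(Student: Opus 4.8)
The plan is to obtain all three cases as direct applications of Theorem \ref{theo2}. In each case the hypotheses $U_{0}\in D(\mathcal{A}_{\Theta,\Sigma})\cap\mathbb{X}^{\infty}(\Omega,\Sigma)$ and the largeness condition \eqref{ini} are already assumed, so the entire task reduces to checking that the balance inequality \eqref{blow-bala} holds for the prescribed polynomial nonlinearities, for some $C_{1}>0$, $C_{2}\geq0$ and some admissible $\varepsilon\in(0,(\alpha/2-1)d_{0})$. This is purely an asymptotic computation of $g_{\alpha}$, $l_{\alpha}$ and $l_{\alpha}^{\prime}$ as $|\tau|\to\infty$.

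First I would record the leading-order behaviour. Integrating the assumptions on $f^{\prime},h^{\prime}$ gives $f(\tau)\sim c_{f}|\tau|^{q}\tau$, $\overline{f}(\tau)\sim\frac{c_{f}}{q+2}|\tau|^{q+2}$ and $h(\tau)\sim c_{h}|\tau|^{p}\tau$, $\overline{h}(\tau)\sim\frac{c_{h}}{p+2}|\tau|^{p+2}$. Substituting into the definitions yields $g_{\alpha}(\tau)\sim c_{f}\big(\frac{\alpha}{q+2}-1\big)|\tau|^{q+2}$, $l_{\alpha}(\tau)\sim c_{h}\big(\frac{\alpha}{p+2}-1\big)|\tau|^{p+2}$, and $l_{\alpha}^{\prime}(\tau)=-h^{\prime}(\tau)\tau+(\alpha-1)h(\tau)\sim c_{h}(\alpha-p-2)|\tau|^{p}\tau$, so that $\big(l_{\alpha}^{\prime}(\tau)\big)^{2}\sim c_{h}^{2}(p+2-\alpha)^{2}|\tau|^{2p+2}$. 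Thus the left-hand side of \eqref{blow-bala} is assembled from three contributions, of orders $|\tau|^{q+2}$ (the bulk term built from $g_{\alpha}$), $|\tau|^{p+2}$ (the interface term $l_{\alpha}$) and $|\tau|^{2p+2}$ (the Poincar\'e-correction term). Since all the functions involved are continuous, their values on any bounded $\tau$-set are absorbed into $C_{2}$, so it suffices to control the leading order.

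Next I would compare these orders. As $p\geq0$ one always has $2p+2\geq p+2$, so the interface term $l_{\alpha}$ is never dominant. When $q>2p$ (cases (a) and (c)) the bulk term of order $|\tau|^{q+2}$, whose coefficient is proportional to $-c_{f}\big(1-\frac{\alpha}{q+2}\big)>0$ (using $c_{f}<0$ and $\alpha<q+2$), strictly dominates both $h$-contributions, so the left-hand side of \eqref{blow-bala} tends to $+\infty$ and the inequality holds with any $C_{1}>0$ and $C_{2}$ sufficiently large. Both $\alpha$-windows $(p+2,q+2)$ and $(2,q+2)$ sit inside $(2,q+2)$, the range in which Theorem \ref{theo2} applies and in which this leading coefficient keeps its sign; the tighter requirement $\alpha>p+2$ of case (a) merely renders the subdominant term $l_{\alpha}$ harmless and is immaterial at leading order. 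In the borderline case $q=2p$ (case (b)) the orders $|\tau|^{q+2}$ and $|\tau|^{2p+2}$ coincide, so dominance alone no longer suffices and I must instead compare the \emph{net} coefficient of $|\tau|^{2p+2}$: the favourable contribution $-c_{f}\big(1-\frac{\alpha}{q+2}\big)$ coming from $g_{\alpha}$ against the always-negative Poincar\'e-correction $\frac{(C_{\Omega,\Sigma}^{\ast})^{2}}{4\varepsilon}c_{h}^{2}(p+2-\alpha)^{2}$. Requiring the former to exceed the latter is precisely the explicit inequality assumed in case (b). Once \eqref{blow-bala} is verified in each case, Theorem \ref{theo2} delivers finite-time blow-up.

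The step I expect to be the main obstacle is exactly the borderline regime $q=2p$: there the energy-feeding bulk source and the energy-draining interfacial Poincar\'e-correction grow at the identical rate $|\tau|^{2p+2}$, so blow-up is no longer automatic and the outcome rests on the sharp coefficient comparison, namely the displayed inequality of case (b) linking $c_{f}$, $c_{h}$, $\varepsilon$ and the Poincar\'e constant $C_{\Omega,\Sigma}^{\ast}$. A secondary bookkeeping point is that both $c_{f}$ and $c_{h}$ are negative and that the constraints $\alpha\in(2,q+2)$ and $\varepsilon\in(0,(\alpha/2-1)d_{0})$ must be honoured throughout, so that the constants $D_{1},D_{2}$ furnished by Theorem \ref{theo2} remain positive and \eqref{ini} stays a genuine largeness condition on the data.
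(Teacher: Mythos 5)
Your proposal is correct and takes essentially the same route as the paper's own proof: you reduce each case to the balance condition \eqref{blow-bala} via the leading-order asymptotics of $g_{\alpha}$, $l_{\alpha}$ and $l_{\alpha}^{\prime}$, arrive at the same three competing terms as in \eqref{4.15}, and settle cases (a) and (c) by dominance of the positive $\left\vert \tau\right\vert^{q+2}$ term and case (b) by precisely the net-coefficient comparison appearing in the displayed hypothesis. The one caveat, which you share with (and in effect inherit from) the paper, is a sign convention: your correctly computed $g_{\alpha}(\tau)\sim c_{f}\left(\tfrac{\alpha}{q+2}-1\right)\left\vert \tau\right\vert^{q+2}$ is inserted into the left-hand side of \eqref{blow-bala} as $+g_{\alpha}$ rather than the printed $-g_{\alpha}$; this is the combination the concavity argument of Theorem \ref{theo2} actually requires (compare \eqref{4.7} and \eqref{4.10}), and it reproduces the paper's \eqref{4.15} term by term.
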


\begin{proof}
We begin by noting that for large $\left\vert \tau\right\vert \geq \tau_{0}$%
, we have
\begin{align*}
h\left( \tau\right) \sim c_{h}\left\vert \tau\right\vert
^{p}\tau,\;\;h\left( \tau\right) \tau\sim c_{h}\left\vert \tau\right\vert
^{p+2}\;\mbox{ and }\;f\left( \tau\right) \sim c_{f}\left\vert
\tau\right\vert ^{q}\tau, \;\;f\left( \tau\right) \tau\sim c_{f}\left\vert
\tau\right\vert ^{q+2}.
\end{align*}
Moreover, as $\left\vert \tau\right\vert \rightarrow \infty $, we have
\begin{align*}
\overline{f}\left( \tau\right) \sim \frac{c_{f}}{q+2}\left\vert
\tau\right\vert ^{q+2}\;\mbox{ and }\;\overline{h}\left( \tau\right) \sim
\frac{c_{h}}{p+2}\left\vert \tau\right\vert ^{p+2}
\end{align*}
which yields
\begin{align*}
g_{\alpha}\left( \tau\right) \sim c_{f}\left( 1-\alpha /\left( q+2\right)
\right) \left\vert \tau\right\vert ^{q+2}\;\mbox{ and }\;l_{\alpha }\left(
\tau\right) \sim c_{h}\left( 1-\alpha /\left( p+2\right) \right) \left\vert
\tau\right\vert ^{p+2}.
\end{align*}
Thus, for large enough $\left\vert \tau\right\vert \geq \tau_{0}$, the
highest-order terms on the right-hand side of \eqref{blow-bala} are%
\begin{equation}
-c_{f}\left( 1-\alpha /\left( q+2\right) \right) \left\vert \tau\right\vert
^{q+2}+\frac{\mu _{\Sigma }\left( \Sigma \right) }{\left\vert \Omega
\right\vert }c_{h}\left( 1-\alpha /\left( p+2\right) \right) \left\vert
\tau\right\vert ^{p+2}-\frac{\left( C_{\Omega ,\Sigma }^{\ast }\right)
^{2}c_{h}^{2}\left( p+2-\alpha \right) ^{2}}{4\varepsilon }\left\vert
\tau\right\vert ^{2p+2}.  \label{4.15}
\end{equation}%
In the first case (a), the highest-order term in (\ref{4.15}) is the first
one since $-c_{f}>0$. Hence, \eqref{blow-bala} is satisfied for some $%
\varepsilon \in \left( 0,\left( \alpha /2-1\right) d_{0}\right) $ and the
conclusion of Theorem \ref{theo2} applies. For the case (b), we notice that
the highest-order term is%
\begin{equation*}
\left( -c_{f}\left( 1-\frac{\alpha}{\left( q+2\right)} \right) -\frac{\left(
C_{\Omega ,\Sigma }^{\ast }\right) ^{2}c_{h}^{2}\left( p+2-\alpha \right)
^{2}}{4\varepsilon }\right) \left\vert \tau\right\vert ^{2p+2}
\end{equation*}%
so that \eqref{blow-bala} is once more satisfied if the coeffcient of this
term is positive. In the last case (c), we notice that since $p\leq 2p<q$
and the coefficients of the first and second terms in \eqref{4.15} are
positive and negative respectively, the highest-order term in \eqref{4.15}
is still the first one since $-c_{f}>0$. Therefore, \eqref{blow-bala} is
satisfied and the conclusion holds.
\end{proof}

The last result is of similar nature and roughly states that if both
nonlinearities have a bad sign at infinity in contrast to the conditions of
Corollary \ref{cor-weaker}, blow-up in finite time of some strong solutions
to the transmission problem still occurs.

\begin{theorem}
\label{theo3}Assume that $\Omega \backslash \Sigma $ has the $\widetilde{W}%
^{1,2}$-extension property and let $U$ be a (local) strong solution of %
\eqref{p1b}-\eqref{p4b} in the sense of Theorem \ref{SG}. Let $\alpha >2$
and suppose that there exist constants $C_{f},C_{h},C_{f}^{^{\prime
}},C_{h}^{^{\prime }}\geq 0$ such that%
\begin{equation}
\left\{
\begin{array}{l}
g_{\alpha }\left( \tau\right) =\alpha \overline{f}\left( \tau\right)
-f\left( \tau\right) \tau\leq -C_{f}\tau^{2}+C_{f}^{^{\prime }}, \\
l_{\alpha }\left( \tau\right) =\alpha \overline{h}\left( \tau\right)
-h\left( \tau\right) \tau\geq C_{h}\tau^{2}-C_{h}^{^{\prime }},%
\end{array}%
\right.  \label{4.16}
\end{equation}%
for all $\tau\in \mathbb{R}.$ Then there exist constants $D_{1}>0,D_{2}>0$
(depending only on $\mu _{\Sigma }\left( \Sigma \right) ,\left\vert \Omega
\right\vert ,$ the constants in \eqref{4.16} and $\alpha $) such that for
any initial datum $U_{0}\in D\left( \mathcal{A}_{\Theta ,\Sigma }\right)
\cap \mathbb{X}^{\infty }\left( \Omega ,\Sigma \right) $ satisfying%
\begin{equation}
D_{1}\left\Vert U_{0}\right\Vert _{\mathbb{X}^{2}\left( \Omega ,\Sigma
\right) }^{2}>\alpha E\left( 0\right) +D_{2},  \label{4.17}
\end{equation}%
the strong solution $U$ of \eqref{p1b}-\eqref{p4b} blows up in finite time.
\end{theorem}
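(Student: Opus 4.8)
The plan is to run the energy-and-concavity scheme of Levine and Payne exactly as in the proof of Theorem~\ref{theo2}, the one genuinely new feature being that the pointwise coercivity hypotheses \eqref{4.16} let us bypass the trace Poincar\'e inequality of Lemma~\ref{poincare-weak} (which is why Theorem~\ref{theo3} only assumes the $\widetilde W^{1,2}$-extension property). First I would reproduce the opening of that proof verbatim: with $G(t)=\tfrac12\|U(t)\|_{\mathbb{X}^{2}(\Omega,\Sigma)}^{2}$, differentiating and feeding the energy identity \eqref{4.1} together with the energy inequality \eqref{en-ineq} into the rearrangement \eqref{4.7} gives, for $t\ge 0$ and using $\alpha>2$,
\begin{equation*}
G'(t)\geq \alpha\int_{0}^{t}\|\partial_{t}U(s)\|_{\mathbb{X}^{2}(\Omega,\Sigma)}^{2}\,ds+\Big(\frac{\alpha}{2}-1\Big)\mathcal{A}_{\Theta,\Sigma}(U,U)+\int_{\Sigma}l_{\alpha}(u)\,d\mu_{\Sigma}-\int_{\Omega\backslash\Sigma}g_{\alpha}(u)\,dx-\alpha E(0).
\end{equation*}

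The step where the two proofs part ways is the handling of the nonlinear bulk/interface terms. Whereas Theorem~\ref{theo2} splits them and calls on Lemma~\ref{poincare-weak}, here I would insert the two inequalities in \eqref{4.16} directly. From $g_{\alpha}(\tau)\le -C_{f}\tau^{2}+C_{f}'$ one gets $-g_{\alpha}(\tau)\ge C_{f}\tau^{2}-C_{f}'$, and $l_{\alpha}(\tau)\ge C_{h}\tau^{2}-C_{h}'$ is already in the right form; integrating over $\Omega\backslash\Sigma$ and $\Sigma$ respectively yields
\begin{equation*}
\int_{\Sigma}l_{\alpha}(u)\,d\mu_{\Sigma}-\int_{\Omega\backslash\Sigma}g_{\alpha}(u)\,dx\geq C_{h}\int_{\Sigma}|u|^{2}\,d\mu_{\Sigma}+C_{f}\int_{\Omega\backslash\Sigma}|u|^{2}\,dx-\big(C_{h}'\mu_{\Sigma}(\Sigma)+C_{f}'|\Omega|\big).
\end{equation*}
Discarding the nonnegative form $(\tfrac{\alpha}{2}-1)\mathcal{A}_{\Theta,\Sigma}(U,U)$ (and, should one of $C_{f},C_{h}$ vanish, retaining just enough of it via $\beta(x)\ge\beta_{0}>0$ and the positivity of $\lambda_{1}$ from Theorem~\ref{theo-sg-dy}(c) to control the missing part of $\|U\|_{\mathbb{X}^{2}(\Omega,\Sigma)}^{2}$), and recalling $G=\tfrac12\|U\|_{\mathbb{X}^{2}(\Omega,\Sigma)}^{2}$, I arrive at precisely the initial-value differential inequality \eqref{4.11}, that is $G'(t)\ge D_{1}G(t)-(D_{2}+\alpha E(0))$ with a strictly positive $D_{1}$ built from $C_{f},C_{h},\alpha$ and $D_{2}=C_{h}'\mu_{\Sigma}(\Sigma)+C_{f}'|\Omega|\ge 0$.

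With \eqref{4.11} in hand the conclusion follows word for word from the concavity argument closing the proof of Theorem~\ref{theo2}. Writing $H(U(t)):=D_{1}G(t)-(D_{2}+\alpha E(0))$, the hypothesis \eqref{4.17} is exactly $H(U_{0})>0$; plugging this back into \eqref{4.11} shows $H(U(t))>0$ persists for as long as $U$ exists, so that $G$, and hence $M'(t)=2G(t)$ with $M(t)=\int_{0}^{t}\|U(\tau)\|_{\mathbb{X}^{2}(\Omega,\Sigma)}^{2}\,d\tau$, grows at least exponentially. The retained dissipation term gives $M''(t)>2\alpha\int_{0}^{t}\|\partial_{t}U(\tau)\|_{\mathbb{X}^{2}(\Omega,\Sigma)}^{2}\,d\tau$ as in \eqref{M}; multiplying by $M(t)$ and applying Cauchy--Schwarz yields $M''(t)M(t)>(\tfrac{\alpha}{2}-\varepsilon)(M'(t))^{2}$ for large $t$ and small $\varepsilon\in(0,\alpha/2)$, which forces a suitable negative power of $M$ to be concave while tending to zero, the contradiction of \cite{LP}. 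Hence $U$ must cease to exist in finite time.

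I expect the only delicate point to be producing \eqref{4.11} with $D_{1}>0$ from \eqref{4.16}: one must make sure that when $C_{f}$ or $C_{h}$ degenerates to zero the coercivity of $(\tfrac{\alpha}{2}-1)\mathcal{A}_{\Theta,\Sigma}$ compensates for the lost control on the corresponding piece of the $\mathbb{X}^{2}(\Omega,\Sigma)$-norm. The constraint $\alpha>2$ is used twice, to keep this coercive factor positive and to drive the concavity contradiction; everything else is a transcription of the Theorem~\ref{theo2} argument.
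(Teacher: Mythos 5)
Your proof is correct and follows essentially the same route as the paper: insert the pointwise bounds \eqref{4.16} into \eqref{4.7}, deduce the differential inequality \eqref{4.11}, and run the Levine--Payne concavity argument exactly as in Theorem \ref{theo2}. The only cosmetic difference is that the paper always retains the coercive term $\left(\tfrac{\alpha}{2}-1\right)\mathcal{A}_{\Theta,\Sigma}(U,U)$ and takes $D_{1}=2\left(\left(\tfrac{\alpha}{2}-1\right)\widetilde{C}_{\Omega,\Sigma}+\min\left(C_{f},C_{h}\right)\right)$, whereas you discard it unless $\min(C_{f},C_{h})=0$ and then recover the same control via the coercivity of the form (equivalently $\lambda_{1}>0$), which is the identical mechanism.
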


\begin{proof}
In this case using \eqref{4.16} from \eqref{4.7} we get
\begin{align}
G^{^{\prime }}\left( t\right) & \geq \alpha \int_{0}^{t}\left\Vert \partial
_{t}U\left( \tau \right) \right\Vert _{\mathbb{X}^{2}\left( \Omega ,\Sigma
\right) }^{2}d\tau +\left( \frac{\alpha }{2}-1\right) \mathcal{A}_{\Theta
,\Sigma }(U,U)  \label{4.18} \\
& +C_{f}\int_{\Omega \backslash \Sigma }\left\vert u\right\vert
^{2}dx+C_{h}\int_{\Sigma }\left\vert u\right\vert ^{2}d\mu _{\Sigma
}-C_{f}^{^{\prime }}\left\vert \Omega \right\vert -C_{h}^{^{\prime }}\mu
_{\Sigma }\left( \Sigma \right) -\alpha E\left( 0\right) .  \notag
\end{align}%
Thus also in this case we deduce that $G\left( t\right) $ satisfies %
\eqref{4.11} with
\begin{equation*}
D_{1}=2\left( \left( \frac{\alpha }{2}-1\right) \widetilde{C}_{\Omega
,\Sigma }+\min \left( C_{f},C_{h}\right) \right) >0,\text{ }%
D_{2}=C_{f}^{^{\prime }}\left\vert \Omega \right\vert +C_{h}^{^{\prime }}\mu
_{\Sigma }\left( \Sigma \right) .
\end{equation*}%
As before all solutions of \eqref{4.11} with $H\left( U_{0}\right) >0$
(which is equivalent to \eqref{4.17}) must blow-up in finite time. The proof
is finished.
\end{proof}

\begin{acknowledgement}
The authors thank the anonymous referees for their careful reading and their
important remarks on an earlier version of the manuscript.
\end{acknowledgement}

\end{document}